\documentclass[11pt,a4paper]{article}
\oddsidemargin 0pt
\evensidemargin 20pt
\marginparwidth 29pt
\textwidth 470pt

\textheight 720pt
\topmargin -40pt

\usepackage[nottoc]{tocbibind}
\usepackage{graphicx}
\usepackage{enumerate}  
\usepackage{amsmath}
\usepackage{amsfonts}
\usepackage{psfrag}
\usepackage{color}
\usepackage{pgf,tikz}
\usepackage{mathrsfs}
\usetikzlibrary{arrows}
\usepackage{fancyhdr}
\usepackage{txfonts} 
\usepackage[colorlinks,pdfpagelabels,pdfstartview = FitH,bookmarksopen = true,bookmarksnumbered = true,linkcolor = blue,plainpages = false,hypertexnames = false,citecolor = red] {hyperref}

%\rfoot{Page \thepage}

\newtheorem{theorem}{Theorem}
\def\loc{\operatorname{\rm{loc}}}

\newtheorem{definition}{Definition}
\newtheorem{remark}{Remark}

\newcommand{\Ruz}{R{\accent'27u}{\accent'24z}i{\accent'24c}ka}
\newtheorem{lemma}{Lemma}
\newtheorem{proposition}{Proposition}

\newcommand{\data}{\texttt{data}}

\newcommand{\divo}{\textnormal{div}}

\newcommand{\mint}{\mathop{\int\hskip -1,05em -\, \!\!\!}\nolimits}

\numberwithin{equation}{section}
\setlength{\delimitershortfall}{-0.1pt}
\allowdisplaybreaks[4]

\newenvironment{proof}{\smallskip\noindent\emph{Proof.}\hspace{1pt}}%
{\hspace{-5pt}{\nobreak\quad\nobreak\hfill\nobreak$\square$\vspace{8pt}%
\par}\smallskip\goodbreak}

\def\en{\mathbb N}
\newcommand{\ratio}{\nu, L}
\def\er{\mathbb R}
\newcommand{\ern}{\mathbb{R}^n}
\newcommand{\tu}{\tilde u}
\newcommand{\tv}{\tilde v}
\newcommand{\erN}{\mathbb{R}^N}
\newcommand\eps\varepsilon
\def\eqn#1$$#2$${\begin{equation}\label#1#2\end{equation}}
\newcommand{\ernN}{\mathbb{R}^{N\times n}}
\newcommand{\ai}{a_{{\rm i}}}
\newcommand{\as}{a_{{\rm s}}}

\newcommand{\brx}{B_{r}}

\newcommand{\be}{\begin{equation}}
\newcommand{\ee}{\end{equation}}

\newcommand{\supp}{\operatorname{supp}}

\newcommand{\rr}{\varrho}

\newcommand{\dist}{\operatorname{dist}}
\newcommand{\const}{\operatorname{const}}
\newcommand{\snr}[1]{\lvert #1\rvert}

\newcommand{\nr}[1]{\lVert #1 \rVert}

\newcommand{\br}{B_{r}(x_{0})}

\newcommand{\RN}{\mathbb{R}^{N}}
\newcommand{\SN}{\mathbb{S}^{N-1}}
\newcommand{\N}{\mathbb{N}}

\newcommand{\vv}{\|v\|_{L^\infty(B_{r})}}
\def\name[#1, #2]{#1 #2}

\newcommand{\f}{\Phi}

\newcommand{\rif}[1]{(\ref{#1})}
\newcommand{\trif}[1] {\textnormal{\rif{#1}}}

\newcommand{\stackleq}[1]{\stackrel{\rif{#1}}{ \leq}}

\DeclareMathOperator*{\supess}{ess\, sup}
\DeclareMathOperator*{\infess}{ess\, inf}

\newcommand{\cd}[1]{\textcolor[rgb]{0.00,0.00,0.00}{#1}}
\newcommand{\cc}[1]{\textcolor[rgb]{0.00,0.00,0.00}{#1}}
\newcommand{\ccc}[1]{\textcolor[rgb]{0.00,0.00,0.00}{#1}}
\definecolor{ffqqqq}{rgb}{1.,0.,0.}
\definecolor{uuuuuu}{rgb}{0.26666666666666666,0.26666666666666666,0.26666666666666666}

%\\mathrm{D}eclareMathOperator{\argmin}{argmin}

%\\mathrm{D}eclareMathOperator{\proj}{Proj}\includegraphics[]{../../../../\mathrm{D}esktop/NiceMunich.pdf}

%%%%%%%%%%%%%%%%%%
\delimitershortfall=-0.1pt

\begin{document}

\title{Manifold constrained non-uniformly elliptic problems}

\author{
\textsc{Cristiana De Filippis\thanks{Mathematical Institute, University of Oxford, Andrew Wiles Building, Radcliffe Observatory Quarter, Woodstock Road, Oxford, OX26GG, Oxford, United Kingdom. E-mail:
      \texttt{Cristiana.DeFilippis@maths.ox.ac.uk}} \ and   Giuseppe Mingione \thanks{Dipartimento SMFI, Universit\`a di Parma, Viale delle Scienze 53/a, Campus, 43124 Parma, Italy
E-mail: \texttt{giuseppe.mingione@unipr.it}}}}

\date{\today
}

\maketitle

\thispagestyle{plain}

\vspace{-0.6cm}
\begin{abstract}
We consider the problem of minimizing variational integrals defined on \cc{nonlinear} Sobolev spaces of competitors taking values into the sphere. The main novelty is that the underlying energy features a non-uniformly elliptic integrand exhibiting different polynomial growth conditions and no homogeneity. We develop a few intrinsic methods aimed at proving partial regularity of minima and providing techniques for treating larger classes of similar constrained non-uniformly elliptic variational problems. In order to give estimates for the singular sets we use a  general family of Hausdorff type measures following the local geometry of the integrand. A suitable comparison is provided with respect to the naturally associated capacities.

\end{abstract}

       \vspace{5pt}                           %
\vspace{0.2cm}

\setlength{\voffset}{-0in} \setlength{\textheight}{0.9\textheight}

\setcounter{page}{1} \setcounter{equation}{0}
\tableofcontents

\section{Introduction}
In this paper we want to treat, from the regularity theory viewpoint, a special but yet significant class of non-uniformly variational problems characterized by the fact that minimizers and competitors take their values into the sphere. At the same time, we want to introduce a few intrinsic methods and viewpoints that should be useful in order to prove regularity theorems for more general classes of non-uniformly elliptic equations and functionals with geometric constraints. For this reason we shall combine and present both old techniques from \ccc{different perspectives} and new ones. Specifically, we shall consider a class of variational integrals of the type
\eqn{genF}
$$
W^{1,1}(\Omega,\erN) \ni w   \mapsto\mathcal F(w, \Omega):= \int_{\Omega} F(x, w, Dw) \, dx\;,
$$
where the main model is provided by the so-called double phase functional
\eqn{modello} 
$$
\left\{
\begin{array}{c}
\displaystyle W^{1,1}(\Omega,\erN) \ni w   \mapsto\mathcal P(w, \Omega):= \int_{\Omega} (|Dw|^p+a(x)|Dw|^q) \, dx
\\[10 pt] 
\quad 1 <p < q<N\,, \quad 0 \leq a(\cdot) \in C^{0, \alpha}(\Omega) \;. 
\end{array}
\right.
$$
Here, as in the rest of the paper, $\Omega \subset \er^n$ denotes (unless otherwise specified) a bounded open domain, $n \geq 2$, and, again unless otherwise \cc{stated}, we consider $N> 1$ and $a(\cdot)$ satisfies the condition in $\rif{modello}_{3}$ for some $\alpha \in (0,1]$. When $a(\cdot)\equiv 0$, the integral in \rif{modello} reduces to the familiar $p$-Dirichlet integral 
\eqn{pLap}
$$
 w   \mapsto  \int_{\Omega} |Dw|^p  \, dx\;,
$$
whose Euler-Lagrange equation is given by the $p$-Laplacean system
$
-\divo\, (|Du|^{p-2}Du)=0. 
$
The regularity theory for minimizers of the functional in \rif{pLap} has been treated at length starting from the seminal papers of Uraltseva \cite{Ur} and Uhlenbeck \cite{Uh}, in the scalar and vectorial case, respectively. For results concerning more general functionals as in \rif{genF} with $p$-growth, that is, modelled on the one in \rif{modello} and therefore satisfying
\eqn{cond-p}
$$
|z|^p \lesssim F(x, w, z)\lesssim |z|^p+1\;,
$$ see for instance \cite{KMbull, KMjems, Manth} and related references. Under suitable assumptions, the final outcome is that minima are locally of class $C^{1, \beta}$, for some $\beta \in (0,1)$, on a subset of full $n$-dimensional Lebesgue measure. The regularity theory in the case when both minimizers and competitors take values into a manifold $\mathcal M \subset \er^N$ poses additional difficulties. In particular, the case $\mathcal M = \SN$ is the $(N-1)$-dimensional sphere in $\erN$ has been treated extensively. The theory \cc{started with} the fundamental papers of Eells \& Sampson \cite{ES} and Schoen \& Uhlenbeck \cite{SU, SU2}\cc{,} \ccc{analyzing} harmonic maps, i.e., constrained minimizers of the functional in \rif{pLap} for $p=2$. The extension of  such basic results to the case $p\not =2$ has been done in the by now classical papers of Fuchs \cite{fuchs1, fuchs3}, Hardt \& Lin \cite{HL} and Luckhaus \cite{luck}. Moreover, several results have been extended to more general functionals with $p$-growth, that is, functionals as in \rif{genF} with $F(\cdot)$ satisfying \rif{cond-p}; see for instance \cite{HKL}. On the other hand, we notice that energies of the type in \rif{modello} do not satisfy conditions as in \rif{cond-p}, but rather, the more general and flexible ones
\eqn{cond-pq}
$$
|z|^p \lesssim F(x, w, z)\lesssim |z|^q+1\;,\quad 1 <p < q\;.
$$ 
These are known in the literature as $(p,q)$-growth conditions or non-standard growth conditions. They have pioneered by Uraltseva \& Urdaletova \cite{UU} and Zhikov \cite{Z1, Z2, Z3} in the context of Homogenization (see also the recent paper \cite{DGloria}). In the setting of the Calculus of Variations they have been systematically studied by Marcellini \cite{M1, M2}. We refer to \cite{Dark} for a reasonable survey \cd{on} the subject. Growth conditions of the type in \rif{cond-pq} \cc{often occur} when considering variational models for physical phenomena. For instance, in the setting of Homogenization, a model as the double phase functional can be used to describe a composite of two materials with hardening exponents $p$ and $q$ respectively, whose geometry is dictated by the zero set $\{a(x)=0\}$ of the coefficient $a(\cdot)$. Obviously, both in the case $a(\cdot)\equiv 0$ and in the one when $\inf\, a(\cdot)>0$, we have a functional with 
standard polynomial growth of the type in \rif{cond-p} (with $p$ replaced by $q$ in the second case). In the remaining \cc{one,} the nature of ellipticity of the functional $\mathcal P$ switches between the $p$ and $q$ rates accordingly to the value of $a(\cdot)$. For this reason models as those in \rif{modello} are particularly useful to describe strongly anisotropic media. We refer to the papers \cite{BCM1, Radu1, CM1,DOh, DP, PRR, Z1} for more results, different directions and related topics. Another, softer instance of functional with non-standard growth used to describe anisotropic models \cite{AM2, Z1, Z2, Z3} is the variable exponent one
\eqn{variable}
$$
 w   \mapsto  \int_{\Omega} |Dw|^{p(x)}  \, dx\;,
$$
that has attracted a lot of attention in the last years \cite{Radu2}; a match between the two cases has been recently proposed in \ccc{\cite{me,the4,ELM,OK1,Radu2,RT1,RT2}}.   
  The growth conditions in \rif{cond-pq} are typically linked to the non-uniform ellipticity of the Euler-Lagrange equations associated to the functionals in question. In case of \rif{modello}, such equation is
\eqn{eulerpq}
$$
-\divo\, A(x,Du)=0 \qquad \mbox{with} \quad A(x,z)=|z|^{p-2}z+(q/p)a(x)|z|^{q-2}z\;, 
$$
and therefore, the ellipticity ratio \cc{$\mathcal R(z, B)$} on any ball $B\subset \Omega$ touching the transition set $\{a(x)=0\}$, which is defined by 
$$
\mathcal R(z, B):= \frac{\mbox{$\sup_{x\in B}$ of the highest eigenvalue of}\ \partial_{z} A(x,z)}{\mbox{$\inf_{x\in B}$ of the lowest eigenvalue of}\  \partial_{z} A(x,z)} \approx  \ccc{1+\|a\|_{L^{\infty}(B)}|z|^{q-p}}\;,
$$
becomes unbounded as $|z|\to \infty$. This means that the equation in \rif{eulerpq} is non-uniformly elliptic. 
More specifically, the asymptotics of the ratio $\mathcal R(z, B)$ exhibit a delicate interplay between the size of $|z|^{q-p}$ and the one of the coefficient $a(\cdot)$ which is crucially close to the zero set $\{a(x)=0\}$. As a matter of fact, the rate $a(\cdot)$ approaches to zero rebalances the rate of potential blow-up. This is displayed in the sharp condition
$
q \leq p+\alpha
$, which \ccc{in \cite{BCM3,CM2,CM3} is found} to be necessary and sufficient for unconstrained, {\em bounded}, scalar minimizers of the model functional \rif{modello} to be regular; otherwise discontinuous minimizers of $\mathcal P$ may exist, \ccc{\cite{ELM, FMM}}.  Let us remark that both the model functional in \rif{modello} and the one in \rif{variable} fall in the realm of non-autonomous functionals defined in Musielak-Orlicz spaces. These are functionals of the type
\eqn{PPhi}
$$
 w   \mapsto  \int_{\Omega} \Phi(x, |Dw|)  \, dx\;, 
$$
where, \cc{$\Phi \colon \Omega \times [0, \infty)\to [0,\infty)$} is a Caratheodory function such that for each choice of $x \in \Omega$, the partial map $t \mapsto \Phi(x, t)$ is a Young function and thereby generates an Orlicz space that changes with $x$. Such functionals are naturally defined on Musielak-Orlicz-Sobolev spaces/classes $W^{1, \Phi}$, i.e., 
\eqn{Orlicz-Sob}
$$
\cc{W^{1, \Phi}(\Omega, \RN)= \left\{ \ f \in W^{1,1}(\Omega,\RN) \ \colon \ \Phi(\cdot, |f|)+ 
\Phi (\cdot, |Df|)\in L^1(\Omega) \ \right\}\;}.
$$
For such spaces we refer to the recent interesting monograph \cite{HH}.  
A main problem here is the one of finding general conditions ensuring the regularity of minimizers. This appears as a non-trivial and challenging issue. The main idea is that the regularity of minima of \rif{PPhi} is governed by a delicate interplay between the regularity of the function $x \mapsto \Phi(x,\cdot)$ and the growth conditions of $t \mapsto \Phi(\cdot,t)$. For instance, in the double phase case, relations as 
$q \leq p+ \alpha$ or $q/p \leq 1+\alpha/n$ define sharp conditions for regularity \cite{BCM3, CM1, CM2}. In the variable exponent case the log-modulus of continuity of $p(\cdot)$ is another instance of such conditions. See \cite{BCM2} for a picture concerning the similarities between \rif{modello} and \rif{variable} as particular cases of the one in \rif{PPhi}. More general conditions unifying those for \rif{modello} and \rif{variable} have been formulated in the interesting paper \cite{HHT} and lead to a full De Giorgi-Nash-Moser theory.  A general approach to the gradient regularity has been devised and suggested in \cite{BCM3}. The regularity problem in the case of constrained minimizers for functionals with non-standard growth conditions has recently received some attention, see the higher integrability \ccc{result} recently obtained in \cite{cristiana} and the \ccc{singular set estimates} proved for the variable exponent case in \cite{me}. The main difficulties essentially rely in the lack of a certain number of properties, that are typically linked to uniform ellipticity and that are essential in order to treat constrained minimizers. In this paper we undertake this issue in the model case of double phase energies, i.e., functionals of the type in \rif{genF} controlled by the one in \rif{modello} in the sense of \rif{assF} below. Moreover, we consider the case when the manifold is the $(N-1)$-dimensional sphere $\SN$ in $\er^N$, that already incorporates several of the new difficulties. Our aim here is also to propose an intrinsic approach which departs from the usual estimates, \cc{and it is designed for treating} the quantity $ \Phi(\cdot, |Dw|)$ which in this case is $|Dw|^p+a(\cdot)|Dw|^q$, as a sort of replacement of $|Dw|^p$. We shall therefore formulate  and use a certain number of tools (harmonic approximation lemmas, a priori estimates and so on) in terms of the quantity $\Phi(\cdot, |Dw|)$. Accordingly to this viewpoint, in order to characterize the singular sets, we shall use an intrinsic Hausdorff type measure aimed at catching the local geometry of the integrand $\Phi(\cdot, t)$. Such measures give back the standard Hausdorff measure in the case $\Phi(\cdot, t)= t^{p}$ as well as other examples of measures available in the literature. We then compare these measures to the natural capacities generated by functionals of the type in \rif{PPhi} and relate the corresponding outcomes to the size of the singular sets, that are indeed found to have zero capacity.

\subsection{Partial regularity} It is convenient to introduce some notation (see also Section \ref{notationsec} below). We shall denote 
\eqn{HK}
$$
H(x, z):= |z|^p+a(x)|z|^q\,, \qquad \mbox{for all} \ \ z \in \ernN\,,  \ \ x \in \Omega\;,
$$
where $a(\cdot)$ is as in \rif{modello} and recall that in the following it will always be $n \geq 2$ and $N>1$ (this last one, unless otherwise stated). Moreover, with $B\Subset\Omega$ being a ball, we introduce the auxiliary Young functions
\begin{flalign}\label{H+-}
\cc{\begin{cases}
\ H^{-}_{B}(z):=\snr{z}^{p}+a_{i}(B)\snr{z}^{q},\quad &\mbox{where} \ \ a_{i}(B):=\inf_{x\in B}a(x);\\
\ H^{+}_{B}(z):=\snr{z}^{p}+a_{s}(B)\snr{z}^{q},\quad &\mbox{where} \ \ a_{s}(B):=\sup_{x\in B}a(x)\;.
\end{cases}}
\end{flalign}
With abuse of notation, we shall keep on denoting 
$H(x, t) = t^p+a(x)t^q$ for $t\geq 0$ (and the like for the functions in \rif{H+-}), that is when in \rif{HK} $z$ is a non-negative number. In the rest of the paper, with $B\subset \er^n$ being a ball, we shall denote by $r(B)$ its radius. Following \cite{BCM3}, we \cc{then} consider variational integrals of the type in \rif{genF}, 
where $F\colon \Omega\times \RN \times \ernN\to \er$ is a Carath\'eodory integrand, such that $z \mapsto F(\cdot, z)$ is $C^1(\ernN)\cap C^2(\ernN\setminus\{0\})$ and satisfies the following assumptions:
\begin{flalign}\label{assF}
\begin{cases}
\nu H(x,z) \leq F(x,v,z)\le LH(x,z)\\
\ \snr{\partial F(x,v,z)}\snr{z}+\snr{\partial^{2}F(x,v,z)}\snr{z}^{2}\le LH(x,z)\\
\ \nu(\snr{z}^{p-2}+a(x)\snr{z}^{q-2})\snr{\xi}^{2}\le \langle \partial^{2}F(x,v,z)\xi,\xi\rangle\\
\ \snr{\partial F(x_{1},v,z)-\partial F(x_{2},v,z)}\snr{z}\le L\omega(\snr{x_{1}-x_{2}})[H(x_{1},z)+H(x_{2},z)]+L\snr{a(x_{1})-a(x_{2})}|z|^{q}\\
\ \snr{F(x,v_{1},z)-F(x,v_{2},z)}\le L\omega(\snr{v_{1}-v_{2}})H(x,z)
\;.
\end{cases}
\end{flalign}
These are assumed to hold whenever $x, x_{1}, x_{2}\in \Omega$, $v, v_1, v_2 \in \erN$, $z, z_1, z_2 \in \ernN\setminus \{0\}$, $\xi \in \mathbb{R}^{N\times n}$, where $0<\nu\le 1 \leq L$ and $\vartheta \in (0,1)$ are fixed constants and, for every non-negative number $t$, 
\eqn{omegabeta}
$$
\omega(t):=\min\{t^{\beta},1\}\;, \qquad \beta \in (0,1]\;, 
$$
is defined as the standard concave $\beta$-H\"older modulus of continuity. Note that, here as in the following, by \cc{"$\partial$"} we always mean the partial derivative with respect to the gradient variable $z$. We finally consider the necessary structure assumption to deal with the vectorial case, that is, we assume that for every choice of $(x, v)\in \Omega \times \er^N$ there exists a function \ccc{$\tilde F_{x,v}(\cdot)\equiv \tilde F(x, v, \cdot)\colon [0,\infty) \to [0, \infty)$} of class $C^1[0, \infty)\cap C^2(0, \infty)$, such that 
\begin{flalign}\label{assF2}
\begin{cases}
 \ F(x,v,z)= \tilde F(x,v,|z|)\ \mbox{holds for every $z \in \ernN$} \ \mathrm{with} \ t \mapsto \tilde{F}(x,y,t) \ \mbox{being \cc{non-decreasing,}}\\[7 pt]
\displaystyle
 \ \left|\tilde F''(x,v,t+s)-\tilde F''(x,v,t)\right|\leq \frac{LH(x,t)}{t^2}\left(\frac{|s|}{t}\right)^{\beta_1}\;.
\end{cases}
\end{flalign}
The inequality in the last line is assumed to hold whenever $s, t \in \er$ are such that $t>0$ and $2|s|< t$, and with a fixed constant \ccc{$\beta_1 \in (0,1]$} (which is independent of the considered $(x, v)$). 
As for the exponents $p,q$, we assume 
\eqn{mainbound}
$$
p< q < p+\alpha\,, \qquad q< N\;. 
$$
We remark that the inequality $q < p+\alpha$ in the last display, apart from the missing equality case, is a sharp condition for regularity, as shown in \cite{ELM, FMM}. The second inequality $q< N$ relates the growth conditions 
of the problem and the topological properties of the target manifold, which is in this case $\SN$. 
This is necessary in order to use certain projection operators (see Lemma \ref{L4} below). Assumptions of this kind 
are considered by Hardt \& Lin \cite{HL, HKL} in the convex case and by Hopper \cite{hopper} in the quasiconvex one. The related definition of local minimizer we are going to consider is the following:
\begin{definition}\label{defimain} A function $u \in W^{1,1}_{\loc}(\Omega,\SN)$ is a local \cd{minimizer} of the functional $\mathcal F$ defined in \trif{genF} under assumptions \trif{assF}$_1$, if and only if \cd{$H(\cdot,Du) \in L^{1}_{\loc}(\Omega)$} and the minimality condition 
$\mathcal F(u,{\rm supp} \, (u-v)) \leq \mathcal F(v,{\rm supp} \, (u-v))$ is satisfied whenever $v \in W^{1,1}_{\rm{\rm{loc}}}(\Omega,\SN)$ 
is such that ${\rm supp} \, (u-v) \subset \Omega$.
\end{definition}
By definition a local minimizer belongs to \cd{$W^{1,p}_{\loc}(\Omega,\SN)$}; in the rest of the paper we shall appeal such local minimizers sometimes as constrained local minimizers to emphasize that the presence of the constraint $|u|=1$.  
We notice when $a(\cdot)\equiv 0$ assumptions \rif{assF} reduce to the standard ones considered for functionals with $p$-growth when considering partial regularity problems (see for instance \cite{KM1, KM2, MMS2, Dark} and related references). In particular, assumptions \cc{\eqref{assF}-\eqref{assF2}} are devised to cover functionals of the type 
$$
w
\mapsto \int_{\Omega} \left[F_1(x, w, Dw) + a(x)F_{2}(x, w, Dw)\right] \, dx\;,
$$
where $F_1(\cdot)$ and $F_2(\cdot)$ \cc{have $p$- and $q$-growth}, respectively, accordingly to the standard assumptions described for instance in \cite{KM1}. Another functional covered by \cc{our set of assumptions} is 
$$
W^{1,1}(\Omega) \ni w   \mapsto \int_{\Omega} b(x,w)H(x, Dw) \, dx\;, $$
where $0< \nu_1 \leq b(x, v)\leq L_1$, for some constants $\nu_1, L_1$. \cd{Here, $b(\cdot)$ is a H\"older-continuous function}. For later convenience, we shall denote 
\eqn{dataref} 
$$\ccc{\data \equiv 	\data \left(n,N,\nu,L,p,q,\alpha,[a]_{0,\alpha}\right)}
%\data \left(n,N,p,q,\ratio,\|a\|_{L^\infty}, \alpha,[a]_{0,\alpha}, \beta,\|H(\cdot,Du)\|_{L^1(\Omega)}\right)
\;,$$
as the set of basic parameters intervening in the problem. Our first main result is the following:
\begin{theorem}\label{main1} Let $u \in W^{1,1}_{\loc}(\Omega,\SN)$ be a local minimizer of the functional $\mathcal F$ in \trif{genF} under the assumptions \trif{assF}-\trif{mainbound}. There exists $\delta_g \equiv \delta_g(\data)>0$ such that
\eqn{maggioreiniziale}
$$
H(\cdot, Du) \in L^{1+\delta_g}_{\loc}(\Omega)\;.
$$ Moreover, there exist \ccc{$\beta_0\equiv \beta_0(\data,\beta,\beta_{1})>0$} and an open subset $\Omega_u \subset \Omega$, called the regular set, such that
\eqn{mainassertion} 
$$Du \in C^{0, \beta_0}_{\loc}(\Omega_u,\ernN) \qquad \mbox{and}\qquad |\Omega\setminus\Omega_u|=0\;.$$ 
In the case $p(1+\delta_g)>n$ we have $\Omega = \Omega_u$. When $p(1+\delta_g)\leq n$, there exists a number \ccc{$\eps\equiv \eps(\data,\beta)$}, such that a point 
$x_0\in \Omega$ belongs to $\Omega_u$ iff
\eqn{careps}
$$
\left[H^{-}_{\br}\left(\frac{\eps}{r}\right)\right]^{-1}\mint_{\br}H(x, Du) \, dx< 1
$$
holds for some ball $B_{r}(x_0)\Subset \Omega$ with $r \leq 1$. Finally, as for the so-called singular set $\Sigma_u:=\Omega\setminus \Omega_u$, it follows that
\eqn{careps2}
$$
\Sigma_u = \left\{x_0 \in \Omega \, \colon \, \limsup_{\rr \to 0}\, \left[H^{-}_{B_{\varrho}(x_0)}\left(\frac{1}{\rr}\right)\right]^{-1}\mint_{B_{\varrho}(x_0)}H(x, Du) \, dx> 0\right\}\;. 
$$
\end{theorem}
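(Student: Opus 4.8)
The plan is to establish the higher integrability \eqref{maggioreiniziale} first, then to run a Campanato-type excess decay built on an $\mathcal F$-harmonic approximation formulated in the intrinsic geometry of $H(\cdot,\cdot)$, and finally to translate the outcome into the pointwise characterizations \eqref{careps}--\eqref{careps2}. For \eqref{maggioreiniziale} I would fix a ball $B\Subset\Omega$, freeze the coefficient by passing to the comparison functions $H^{-}_{B},H^{+}_{B}$ of \eqref{H+-}, and prove a Caccioppoli inequality for $u$. Here the constraint enters through the construction of admissible competitors: since $q<N$, by Lemma \ref{L4} one may radially project perturbations $u+\varphi$ back onto $\SN$, the resulting extra terms being quadratic in $Du$ and absorbable because $\snr{u}=1$. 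Coupling this with an intrinsic Sobolev--Poincar\'e inequality for the Musielak--Orlicz functional yields, on the level of $H^{-}_{B},H^{+}_{B}$, a reverse H\"older inequality for $H(\cdot,Du)$, and a Gehring-type lemma produces $\delta_g\equiv\delta_g(\data)$ with $H(\cdot,Du)\in L^{1+\delta_g}_{\loc}(\Omega)$, along the lines of \cite{cristiana}. I would also record the constrained Euler--Lagrange system $-\divo\,\partial F(x,u,Du)=\partial_v F(x,u,Du)-\lambda u$, the Lagrange multiplier $\lambda$ being fixed by $\snr{u}=1$; by \eqref{assF} its right-hand side is pointwise bounded by $cH(x,Du)$, hence lies in $L^{1+\delta_g}_{\loc}$, a space slightly better than $L^{1}$, which is precisely what the partial regularity scheme requires.

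Next I would set up the decay estimate. On a small ball $\br$ one compares $u$ with the minimizer of the frozen autonomous functional obtained by replacing $F(x,u,z)$ with $\tilde F\big(x_0,(u)_{\br},\snr{z}\big)$ and $a$ with $a_{i}(\br)$; assumption \eqref{assF2}, the sharp gap $q<p+\alpha$, and the scalar a priori estimates available for such model integrands (in the spirit of \cite{BCM3,CM2}) provide local Lipschitz and $C^{1,\gamma}$ bounds for this frozen solution, with constants depending only on $\data$. An $\mathcal F$-harmonic approximation lemma, stated in terms of an excess functional adapted to $H(\cdot,\cdot)$ (of $V$-function type), then shows that whenever the excess and the freezing errors --- the oscillation of $a$ measured through $\omega$, the oscillation of $u$, and the $L^{1+\delta_g}$-smallness of the right-hand side --- are small, $Du$ is $L^{p}$-close on $\br$ to the gradient of the frozen solution. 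Combined with the $C^{1,\gamma}$-decay of the frozen problem this gives an excess improvement $E(x_0,\theta r)\le c\,\theta^{2\gamma}E(x_0,r)+(\textnormal{freezing errors at scale }r)$, where the higher integrability \eqref{maggioreiniziale} is exactly what makes the error terms summable along dyadic scales. A Campanato iteration then produces an open set $\Omega_u$ with $Du\in C^{0,\beta_0}_{\loc}(\Omega_u,\ernN)$, $\beta_0\equiv\beta_0(\data,\beta,\beta_1)$, while Lebesgue differentiation of $H(\cdot,Du)\in L^{1+\delta_g}_{\loc}$ at a.e. point yields $\snr{\Omega\setminus\Omega_u}=0$.

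It remains to identify $\Omega_u$. By construction a point $x_0$ is regular as soon as the scale-invariant normalised energy $[H^{-}_{\br}(\eps/r)]^{-1}\mint_{\br}H(x,Du)\,dx$ is small at some scale $r\le 1$, which is exactly \eqref{careps}. When $p(1+\delta_g)>n$, H\"older's inequality together with $H(\cdot,Du)\in L^{1+\delta_g}_{\loc}$ gives $\mint_{\br}H(x,Du)\,dx\le c\,r^{-n/(1+\delta_g)}$, while $[H^{-}_{\br}(\eps/r)]^{-1}\le (r/\eps)^{p}$, so the left-hand side of \eqref{careps} is bounded by $c\eps^{-p}r^{\,p-n/(1+\delta_g)}\to 0$; hence \eqref{careps} holds at every point for $r$ small and $\Omega=\Omega_u$. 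Finally, since $H^{-}_{\brho}$ is a doubling Young function of growth between $p$ and $q$, one has $[H^{-}_{\brho}(\eps/\rr)]^{-1}\le \eps^{-q}[H^{-}_{\brho}(1/\rr)]^{-1}$ and $[H^{-}_{\brho}(1/\rr)]^{-1}\le \rr^{p}$: the first inequality shows that vanishing of the $\limsup$ in \eqref{careps2} implies \eqref{careps} for small $\rr$, and the second shows that continuity of $Du$ at a regular point forces that same $\limsup$ to vanish, so $\Sigma_u=\Omega\setminus\Omega_u$ coincides with the set in \eqref{careps2}.

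The step I expect to be the main obstacle is proving the frozen a priori estimates together with the $\mathcal F$-harmonic approximation in the intrinsic $H$-geometry, with no a priori $L^{\infty}$-bound on $Du$ at hand: under the constraint one cannot avoid re-projecting competitors onto $\SN$, and the ensuing gradient-quadratic errors must be handled on the sharp scale $H^{-}_{B},H^{+}_{B}$ rather than merely in $\snr{Du}^{p}$. This is what forces the gap $q<p+\alpha$ and a quantitatively sharp $\delta_g$, so that the Lagrange-multiplier term and all the freezing errors land in function spaces strong enough to close the excess decay; reconciling the non-standard growth with the loss of ellipticity induced by the geometric constraint is the delicate point.
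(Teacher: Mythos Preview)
Your outline handles higher integrability and the characterizations \eqref{careps}--\eqref{careps2} correctly, but the core decay argument collapses two stages that the paper keeps strictly separate, and the single-stage scheme you describe does not close. The paper first works with the \emph{energy} excess $\mint_{B_r}H(x,Du)\,dx$ under the smallness \eqref{careps}: one compares $u$ with the \emph{constrained} frozen minimizer $v\in W^{1,H}_u(B_r,\SN)$ of $w\mapsto\int F(x,(u)_{B_r},Dw)\,dx$ (only the $u$-slot is frozen, not $x$), then applies the intrinsic harmonic approximation Lemma~\ref{int} to $v$ to produce an \emph{unconstrained} minimizer $h$, and for $h$ invokes only the Morrey decay of Theorem~\ref{morreydecayth}, \emph{not} a $C^{1,\gamma}$ estimate. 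This iterates to $u\in C^{0,\gamma}_{\loc}(\Omega_u)$ for every $\gamma<1$ (Proposition~\ref{main1p}). Only then does one pass to stereographic coordinates, so that $\tu=S^{-1}(u)$ becomes an \emph{unconstrained} bounded minimizer of a new functional $G$; now the oscillation term satisfies $\omega(|\tu-(\tu)_{B_r}|)\le cr^{\beta\gamma}$ thanks to the already established H\"older continuity, and a second freezing (this time fully autonomous, via Lemma~\ref{harg}) yields the gradient excess decay and $Du\in C^{0,\beta_0}$.

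Your proposal reaches directly for gradient H\"older continuity by freezing all the way to the autonomous problem and using its $C^{1,\gamma}$ bounds. The obstruction is the freezing error coming from the oscillation of $u$: in a Campanato iteration for the centered $V$-excess, the quantity $\omega(|u-(u)_{B_r}|)$ appears on the right (via \eqref{assF}$_5$), and higher integrability of $H(\cdot,Du)$ does \emph{not} make this summable along dyadic scales --- it is not a power of $r$ until you already know $u$ is H\"older. The smallness \eqref{careps} does control $\mint_{B_r}|u-(u)_{B_r}|$ via Poincar\'e (this is \eqref{omegau}), but that is precisely what drives the first-stage energy decay, not the gradient excess; you cannot bypass the intermediate step ``$u$ is H\"older, hence pass to one chart, hence the constraint disappears''. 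A secondary point: the Euler--Lagrange system for $u$ carries both the Lagrange multiplier and the $\partial_v F$ term, and the paper sidesteps handling them simultaneously by first passing to the constrained frozen minimizer $v$ (so that only the constraint term survives in \eqref{ELfz}) before invoking harmonic approximation.
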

The $\eps$-regularity condition \rif{careps} differs from the usual ones given in the case of functionals with $p$-growth as it gives an intrinsic quantified version of the \cd{amount of} energy needed for regularity; see also the interesting paper \cite{DSV} for the case of autonomous functionals. The shape of \rif{careps} suggests an intrinsic path to estimate the size of the so-called singular set $\Omega\setminus\Omega_u$. Indeed, this can be done via a general definition of certain Hausdorff type measures that can be useful in general contexts too; for this we refer to the next section. It is worth remarking that the \cd{results} of Theorem \ref{main1} \cd{continue} to hold in the case of unconstrained {\em bounded} minimizers and it is new in the vectorial case (it extends the scalar one in \cite{BCM3}). In the unconstrained case the condition $q<N$ in \rif{cond-pq} can be dropped (see Remark \ref{onlyre} below).  
\begin{remark}\label{Intermediate}
\emph{It is still possible to get a partial regularity result by weakening the assumptions on the function $\tilde F_{x, v}(t)\equiv F(x, v, t)$ considered in \eqref{assF2}. Specifically, we can drop \eqref{assF2}$_2$, thereby \cc{replacing \eqref{mainassertion}} with the weaker outcome
\eqn{mainassertion-senza} 
$$u \in C^{0, \beta_2}_{\loc}(\Omega_u,\ernN) \qquad \mbox{and}\qquad |\Omega\setminus\Omega_u|=0\;,$$ 
for every $\beta_2 <1$. }
\end{remark}

\subsection{Weighted Hausdorff measures, intrinsic capacities and singular sets}\label{weight}
Here we shall be slightly more general than what is needed in the present setting, as we wish to settle down a general approach valid also for other contexts. We shall produce a family of Hausdorff type measures that are naturally linked to general functionals of the type in \rif{PPhi}. In the following \cd{we consider} a Carath\'eodory function $\Phi \colon \Omega \times [0, \infty) \to [0, \infty)$, i.e., such that $x \mapsto \Phi (x, t)$ is measurable for every $t\geq 0$ and 
$t \mapsto \Phi (x, t)$ is continuous and non-decreasing for almost every $x \in \Omega$. Here, $\Omega\subset \ern$ denotes an open subset. Moreover, we assume that $\Phi(x, 0)=0$ and that $\lim_{t\to \infty}\, \Phi(x, t)=\infty$ for every $x\in \Omega$. \cd{We also} assume that
\eqn{misurabile}
$$
\Phi(x, t) \lesssim m(x) t^n\;, \qquad \cc{\mbox{for all}} \ t\geq 1\;, \ \ \mbox{a.e.} \ x \in \Omega\;, \quad \mbox{where}\ \ccc{0\le m(\cdot) \in L^{1}_{\mathrm{loc}}(\Omega)}
$$
\cc{and that} 
\begin{equation}\label{0777-1}
\mbox{there exists $\beta_3 \in (0,1)$ such that $\Phi(x, \beta_3)\leq 1$ and $\Phi\left(x, 1/\beta_3\right)\geq1$ for every $x\in \Omega$}\;,
\end{equation}
\begin{equation}\label{0777-22}
\frac{\Phi(x, s)}{s} \lesssim \frac{\Phi(x, t)}{t} \qquad \mbox{whenever} \ \ \ 
0 < s \leq t\;,\cd{ \ \ \mbox{for all} \ \ x\in \Omega}\;.
\end{equation}
These assumptions, also considered in \cite{BHH}, are trivially verified by all the relevant model examples motivating us; see Remark \ref{lista} below.
To proceed, for any $n$-dimensional open ball $B\subset \Omega$ (there is no difference in the following in taking closed balls in this respect) of radius $r(B)\in (0,\infty)$, we define
\eqn{openballs}
$$
h_{\Phi}(B)=\int_{B}\Phi\left(x, 1/\mathrm{r}(B)\right)\, dx\;.
$$
Notice that this function is always finite and that this is guaranteed by \rif{misurabile}\cd{. It results:} 
\begin{flalign*}
\cd{h_{\Phi}(B)\lesssim \mint_{B}m(x)\ dx\;.}
\end{flalign*}
We then use the standard Carath\'eodory's construction to obtain an outer measure. For this, let $E\subset \Omega$ be any subset. We define the weighted $\kappa$-approximating Hausdorff measure of $E$, $\mathcal{H}_{\Phi,\kappa}(E)$ with $0< \kappa \leq 1$, by
\begin{flalign}\label{077}
&\mathcal{H}_{\Phi,\kappa}(E)=\inf_{\mathcal{C}_{E}^{\kappa}}\sum_{j}h_{\Phi}(B_{j})\;,\\
&\mathcal{C}_{E}^{\kappa}=\left \{\ \{B_{j}\}_{j\in \N}\ \mathrm{is \ a \ countable \ collection \ of \ balls \ B_j \subset \Omega \ covering \ }E\, \ \mathrm{such \ that \ }\mathrm{r}(B_{j})\le \kappa \  \right\}\nonumber. 
\end{flalign}
As $0<\kappa_{1}<\kappa_{2}<\infty$ implies $\mathcal{C}_{E}^{\kappa_{1}}\subset \mathcal{C}_{E}^{\kappa_{2}}$, we have that $\mathcal{H}_{\Phi, \kappa_1}(E)\ge \mathcal{H}_{\Phi,\kappa_{2}}(E)$ and there exists the limit
\begin{flalign}\label{078}
\mathcal{H}_{\Phi}(E):=\lim_{\kappa \to 0}\mathcal{H}_{\Phi,\kappa}(E)=\sup_{\kappa>0}\mathcal{H}_{\Phi,\kappa}(E)\;.
\end{flalign}
When considering functionals of the type in \rif{PPhi}, it is convenient to localize the $x$-dependence and locally compare the starting integrand $\Phi(\cdot)$ with \cc{similar maps} that are independent of $x$. This means that, with a ball $B\subset \Omega$ being fixed, we consider the functions $t \mapsto \infess_{x\in B}\, \Phi(x, t)$ and 
$t \mapsto \supess_{x\in B}\, \Phi(x, t)$, and define 
\eqn{defipre}
$$
h_{\Phi}^+(B)=|B|\supess_{x \in B}\, \Phi\left(x, 1/r(B)\right)\quad \mbox{and}\qquad 
h_{\Phi}^-(B)=|B|\infess_{x \in B}\, \Phi\left(x, 1/r(B)\right)
$$
so that
$
h_{\Phi}^-(B)\leq h_{\Phi}(B) \leq h_{\Phi}^+(B).
$
Accordingly, keeping \rif{077}-\rif{078}, we finally set
\eqn{comingfrom}
$$
\mathcal{H}_{\Phi,\kappa}^{\pm}(E)=\inf_{\mathcal{C}_{E}^{\kappa}}\sum_{j}h_{\Phi}^{\pm}(B_{j})\quad\mbox{and}\quad 
\mathcal{H}_{\Phi}^{\pm}(E)=\lim_{\kappa \to 0}\mathcal{H}_{\Phi,\kappa}^{\pm}(E)\;.
$$
The above definitions obviously imply that $\mathcal{H}_{\Phi, \kappa}^{-}(E)\leq \mathcal{H}_{\Phi, \kappa}(E) \leq \mathcal{H}_{\Phi, \kappa}^{+}(E)$ holds for every $\kappa \in (0,1]$ and therefore, upon letting $\kappa \to 0$, it follows that
\eqn{connetti}
$$
\mathcal{H}_{\Phi}^{-}(E)\leq \mathcal{H}_{\Phi}(E) \leq \mathcal{H}_{\Phi}^{+}(E)\;.
$$
\begin{remark}\label{lista}
\emph{
Definition \rif{078} is aimed at catching and unifying several instances of similar objects. Furthermore, let us notice that
\begin{itemize}
\item In the case $\Phi(x, t)\equiv t^p$ for $p\leq n$, then $\mathcal{H}_{\Phi}$ is equivalent (up to \cd{constants}) to the usual $(n-p)$-dimensional spherical Hausdorff measure. 
\item In the case $\Phi(x, t)\equiv t^{p(x)}$ for $p(x)\leq n$ being a continuous function defined on an open subset $\Omega$, then $\mathcal{H}_{\Phi}$ falls in the class of the variable exponent Hausdorff measures \cc{studied} in \cite{niemi, turesson}.
\item In the case $\Phi(x, t)\equiv w(x) t^{p}$ for $p\leq n$ and $w(\cdot)$ being a \cd{non-negative and measurable} function, $\mathcal{H}_{\Phi}$ is equivalent to the weighted Hausdorff measures introduced \cc{in} \cite{niemi, turesson}, with particular emphasis on the situations when $w(\cdot)$ is a Muckenhoupt weight. 
\item The case we are mostly interested in is when $\Phi(x, t)= [H(x,t)]^{1+\delta}\equiv [t^p+a(x)t^q]^{1+\delta}$ for some $\delta \geq 0$, with $H(\cdot)$ as in  \trif{HK} and under the condition that $q(1+\delta)\leq n$. In this case we shall use the notation 
$
\mathcal{H}_{\Phi}\equiv \mathcal{H}_{H^{1+\delta}}$ and $
\mathcal{H}_{\Phi}^{\pm}\equiv \mathcal{H}^{\pm}_{H^{1+\delta}}. $
\end{itemize}}
\end{remark}
\begin{remark}
\emph{By standard arguments, i.e., those of the type needed in the case of the usual Hausdorff measures, the set function $\mathcal{H}_{\Phi}$ turns out to be a \cd{Borel-regular} measure (here we adopt the standard terminology from \cite{EG}). We notice that the definition of $\mathcal{H}_{\Phi,\kappa}$ is invariant when using open or closed balls in \rif{openballs}. As for the set functions $\mathcal{H}_{\Phi}^{\pm}$ in \rif{comingfrom}, these turn out to be Borel regular measures too. We mention an alternative way to describe measures as $\mathcal{H}_{\Phi}^{\pm}$. This occurs upon replacing \eqref{defipre} by 
\eqn{closedballs}
$$
h_{\Phi}^+(B)=|B|\sup_{x \in B}\, \Phi\left(x, 1/r(B)\right)\quad \mbox{and}\qquad 
h_{\Phi}^-(B)=|B|\inf_{x \in B}\, \Phi\left(x, 1/r(B)\right)\;.
$$
In this case the corresponding set functions $\mathcal{H}_{\Phi}^{\pm}$ are again \cd{Borel} measures and are Borel regular too if $\Phi(\cdot)$ is continuous. Alternatively, one can use in the definition \rif{closedballs} closed balls instead of open ones, thereby always getting automatically a Borel regular measure.}
\end{remark}
In order to place the above measures in the setting of regularity of minimizers and to connect the three measures appearing in \rif{connetti}, we next consider the following assumption:
\eqn{controllo}
$$
\supess_{x \in B}\, \Phi\left(x, \beta_4 t\right)\leq c_d\infess_{x \in B}\, \Phi\left(x, t\right)$$ to hold whenever  $1 \leq t \leq 1/r(B)$ \   for all balls $B\subset \Omega$ with $r(B)\leq 1$ and for some constants $\beta_4 \in (0,1], c_d\geq 1$\;.
This assumption is known to be crucial to prove the local H\"older continuity of {\em bounded} minimizers of functionals of the type in \trif{PPhi} and in certain Harmonic Analysis questions related to Musielak-Orlicz spaces; see \cite{HHT, Hasto}. In this respect, assumption \rif{controllo} is sharp by the examples in \cite{ELM, FMM}. When applied to the choice $\Phi (x, t)= t^p +a(x)t^q$ and \ccc{$a(\cdot)\in C^{0,\alpha}(\Omega)$}, \rif{controllo} amounts to require that $q\leq p+\alpha$ as first considered in \cite{CM2}; \cc{see Proposition \ref{triviaros1}} and again \rif{condsi} below. An immediate consequence of \rif{controllo} is the following fact, whose proof is reported in \cc{Section \ref{cap-proofs}}. 
\begin{proposition}\label{triviaros1} Assume that \trif{controllo} holds. Then, for any subset $E\subset \Omega$ it follows that 
\eqn{othercontrol}
$$
\mathcal{H}_{\Phi}^{+}(E) \leq \frac{c_d}{\beta_4^n} \mathcal{H}_{\Phi}^{-}(E)\;.
$$
As a consequence, if $a(\cdot)\in C^{0, \alpha}(\Omega)$, $q\leq p+\alpha$ and $\delta\ge 0$, then there exists a constant $c\equiv c ([a]_{0, \alpha},\delta)\geq 1$ such that the following inequality holds for every subset $E\subset \er^n$:
\eqn{othercontrol2}
$$
\mathcal{H}_{H^{1+\delta}}^{-}(E) \leq \mathcal{H}_{H^{1+\delta}}(E) \leq \mathcal{H}_{H^{1+\delta}}^{+}(E) \leq c \mathcal{H}_{H^{1+\delta}}^{-}(E)\;.
$$
\end{proposition}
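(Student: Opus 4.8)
The plan is to deduce \eqref{othercontrol} directly from the covering definition \eqref{comingfrom} and the pointwise control \eqref{controllo}, and then to verify that the double phase integrand $\Phi(x,t)=[t^p+a(x)t^q]^{1+\delta}$ actually satisfies \eqref{controllo} when $q\le p+\alpha$. First I would fix a subset $E\subset\Omega$ and a parameter $\kappa\in(0,1]$ with $\kappa\le 1$, and take an arbitrary admissible cover $\{B_j\}\in\mathcal{C}_E^\kappa$. For each ball $B_j$ I want to compare $h_\Phi^+(B_j)$ with $h_\Phi^-(B_j')$ for a suitably rescaled ball. The natural move is to apply \eqref{controllo} with the choice $t=1/\mathrm{r}(B_j)$: since $\mathrm{r}(B_j)\le\kappa\le 1$ we have $1\le t\le 1/\mathrm{r}(B_j)$, so the hypothesis applies and gives
\begin{flalign*}
\supess_{x\in B_j}\Phi\!\left(x,\frac{\beta_4}{\mathrm{r}(B_j)}\right)\le c_d\,\infess_{x\in B_j}\Phi\!\left(x,\frac{1}{\mathrm{r}(B_j)}\right)\;.
\end{flalign*}
Now I rescale: let $\tilde B_j$ be the ball concentric with $B_j$ of radius $\mathrm{r}(\tilde B_j)=\mathrm{r}(B_j)/\beta_4$. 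Then $1/\mathrm{r}(\tilde B_j)=\beta_4/\mathrm{r}(B_j)$, and $B_j\subset\tilde B_j$, so
\begin{flalign*}
h_\Phi^+(\tilde B_j)=|\tilde B_j|\supess_{x\in\tilde B_j}\Phi\!\left(x,\frac{1}{\mathrm{r}(\tilde B_j)}\right)
\le \frac{|B_j|}{\beta_4^n}\supess_{x\in B_j}\Phi\!\left(x,\frac{\beta_4}{\mathrm{r}(B_j)}\right)
\le \frac{c_d}{\beta_4^n}\,h_\Phi^-(B_j)\;.
\end{flalign*}
Here the only subtlety is monotonicity: the supess over the larger ball $\tilde B_j$ of $\Phi(\cdot,1/\mathrm{r}(\tilde B_j))$ need not be bounded by the supess over $B_j$ of the same function unless we know $\tilde B_j\subset\Omega$ and control the behaviour on $\tilde B_j\setminus B_j$. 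To avoid this I would instead run the argument on the $\mathcal{H}^+_{\Phi,\kappa}$ side with a cover by the balls $B_j$ themselves and estimate $h_\Phi^+(B_j)$ from above: applying \eqref{controllo} backwards is not available, so the cleaner route is to start from a cover realizing $\mathcal{H}^-_{\Phi,\kappa/\beta_4}(E)$ within $\varepsilon$, enlarge each ball by the factor $1/\beta_4$ (still admissible for $\mathcal{C}_E^{\kappa}$ since radii stay $\le\kappa$), and apply the displayed inequality to conclude $\sum_j h_\Phi^+(\tilde B_j)\le (c_d/\beta_4^n)\sum_j h_\Phi^-(B_j)$. Taking the infimum over the $\mathcal{H}^-$-covers and then $\varepsilon\to 0$ yields $\mathcal{H}^+_{\Phi,\kappa}(E)\le (c_d/\beta_4^n)\mathcal{H}^-_{\Phi,\kappa/\beta_4}(E)$, and letting $\kappa\to 0$ (using \eqref{078} and the fact that $\kappa\to0$ iff $\kappa/\beta_4\to0$) gives \eqref{othercontrol}.

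For the second assertion I would check that $\Phi(x,t)=[t^p+a(x)t^q]^{1+\delta}$ with $a\in C^{0,\alpha}(\Omega)$ and $q\le p+\alpha$ satisfies \eqref{controllo}. It suffices to prove it for $\delta=0$, i.e.\ for $H$ itself, since raising to the power $1+\delta$ preserves an inequality of the form $\supess H^+\le c\,\infess H^-$ (with constant $c^{1+\delta}$, whence the dependence $c\equiv c([a]_{0,\alpha},\delta)$). Fix a ball $B$ with $\mathrm{r}(B)\le 1$ and $1\le t\le 1/\mathrm{r}(B)$. For $x,y\in B$ one has $a(x)\le a(y)+[a]_{0,\alpha}|x-y|^\alpha\le a(y)+[a]_{0,\alpha}(2\mathrm{r}(B))^\alpha$, and since $t\le 1/\mathrm{r}(B)$ we get $\mathrm{r}(B)^\alpha t^{q}\le t^{q-\alpha}\cdot\mathrm{r}(B)^\alpha t^\alpha\le t^{q-\alpha}\le t^p$ because $q-\alpha\le p$ and $t\ge 1$. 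Hence $a(x)t^q\le a(y)t^q+2^\alpha[a]_{0,\alpha}t^p$, so with $\beta_4$ absorbing the constant (or simply $\beta_4=1$ and tracking constants) $H(x,t)\le c(1+[a]_{0,\alpha})H(y,t)$ for all $x,y\in B$; taking supess in $x$ and infess in $y$ gives \eqref{controllo} with $c_d=c(1+[a]_{0,\alpha})$. Finally, \eqref{othercontrol2} is immediate: the outer two inequalities are \eqref{connetti} applied to $\Phi=H^{1+\delta}$, and the last one is \eqref{othercontrol} with the just-computed constants, so $c$ depends only on $[a]_{0,\alpha}$ and $\delta$.

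The main obstacle, and the only place requiring care, is the ball-enlargement step in the proof of \eqref{othercontrol}: one must make sure that replacing a covering ball $B_j$ by the concentric ball of radius $\mathrm{r}(B_j)/\beta_4$ keeps all balls inside $\Omega$ and admissible for the relevant covering class, and that the supess/infess are taken over the correct balls so that \eqref{controllo} applies with $t=1/\mathrm{r}(B_j)$ in its allowed range $1\le t\le 1/\mathrm{r}(B_j)$. Once the bookkeeping of which ball carries the supess and which carries the infess is set up correctly, everything else is a direct chain of elementary inequalities; the H\"older-continuity computation for the double phase case is routine and uses only $q\le p+\alpha$ together with $t\ge 1$ and $\mathrm{r}(B)\le 1$.
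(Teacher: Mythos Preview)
Your overall strategy coincides with the paper's, but the bookkeeping in your ``cleaner route'' is reversed and, as written, does not resolve the monotonicity issue you correctly flag. If you start from a cover $\{B_j\}\in\mathcal C_E^{\kappa/\beta_4}$ and enlarge by $1/\beta_4$, the new radii are $\le\kappa/\beta_4^{2}$, not $\le\kappa$; moreover the displayed inequality $h_\Phi^+(\tilde B_j)\le(c_d/\beta_4^n)h_\Phi^-(B_j)$ is precisely the one you said fails because $\supess_{\tilde B_j}\ge\supess_{B_j}$.

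The fix (which is exactly what the paper does) is to apply \eqref{controllo} on the \emph{enlarged} ball rather than on $B_j$. Take $\{B_j\}\in\mathcal C_E^{\kappa}$, set $\tilde B_j:=\beta_4^{-1}B_j$, and use \eqref{controllo} with the ball $\tilde B_j$ and $t=1/r(B_j)$:
\[
\supess_{x\in\tilde B_j}\Phi\!\left(x,\tfrac{\beta_4}{r(B_j)}\right)\le c_d\,\infess_{x\in\tilde B_j}\Phi\!\left(x,\tfrac{1}{r(B_j)}\right)\le c_d\,\infess_{x\in B_j}\Phi\!\left(x,\tfrac{1}{r(B_j)}\right),
\]
the last step because $B_j\subset\tilde B_j$ makes the \emph{infess} decrease. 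This gives $h_\Phi^+(\tilde B_j)\le(c_d/\beta_4^n)h_\Phi^-(B_j)$ directly; since $\{\tilde B_j\}\in\mathcal C_E^{\kappa/\beta_4}$, one obtains $\mathcal H^+_{\Phi,\kappa/\beta_4}(E)\le(c_d/\beta_4^n)\,\mathcal H^-_{\Phi,\kappa}(E)$ (note the $\kappa$-indices are the opposite of what you wrote), and letting $\kappa\to0$ yields \eqref{othercontrol}. Your verification of \eqref{controllo} for $\Phi=H^{1+\delta}$ with $\beta_4=1$ is correct and matches the paper's computation \eqref{condsi}.
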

Following \cite{BHH}, we now introduce a notion of (relative) capacity generated by the function $\Phi(\cdot)$. For a compact subset $K\subset \mathbb{R}^{n}$, we denote
\eqn{cappi0}
$$\cc{{Cap}_{\Phi}^*(K)\equiv {Cap}_{\Phi}^*(K,\Omega):=\inf_{f \in \mathcal{R}(K)}\int_{\Omega}\f(x,\snr{Df})\, dx}$$ 
where 
$$\cd{\mathcal{R}(K):=\left\{ \ f \in W^{1,\Phi}(\Omega)\cap C_{0}(\Omega) \colon \  f\ge 1 \ \mathrm{in}\ K, \ f\ge 0 \  \right\}\;.}
$$
As usual, for open subsets $U\subset \Omega$ we set
$$
{Cap}_{\Phi}(U) := \sup_{K\subset U, \  K\rm {\ is \ compact}}\, {Cap}_{\Phi}^*(K) $$ and then, for general sets $E\subset \Omega$ we finally define
 $${Cap}_{\Phi}(E) := \inf_{E\subset \tilde U\subset \Omega, \ \tilde U\rm {\ is \ open}}\, {Cap}_{\Phi}(\tilde U)\;.
$$
It turns out that, under the present assumptions on $\Phi(\cdot)$, we have ${Cap}_{\Phi}^*(K)={Cap}_{\Phi}(K)$, whenever $K \subset \Omega$ is a compact subset and therefore the symbol ${Cap}_{\Phi}^*$ will not be used anymore, \ccc{see \cite[Proposition 6.3]{BHH}}. Anisotropic capacities of this kind have been studied at length in the literature. Classical reference in this respect are \cite{choquet, frehse, HKM, mazya, niemi}. Here we refer to the recent paper \cite{BHH}, where such capacities have been studied in detail under the assumptions in \rif{0777-1}-\rif{0777-22} considered here. These ensure that ${Cap}_{\Phi}$ enjoys the standard properties of Sobolev capacities; in particular ${Cap}_{\Phi}$ is a Choquet capacity in the sense that
\eqn{cho}
$$
{Cap}_{\Phi}(E) = \sup \, \left\{ {Cap}_{\Phi}(K) \, \colon \, \mbox{$K\subset E$ and $K$ is compact} \right\}
$$
holds for every Borel set $E\subset \Omega$, \ccc{\cite[Remark 3.6]{BHH}}.  Needless to say, in the case $\Phi(x, t)\equiv t^p$, $ {Cap}_{\Phi}$ coincides with the usual relative $W^{1,p}$-capacity. In the following we shall denote ${Cap}_{H}={Cap}_{\Phi}$ when $\Phi(x,t)=t^p+a(x)t^q$. Exactly as in the case of the $W^{1,p}$-capacity, we can prove a relation between capacity and Haudorff measures. For this, we need some more assumptions. Specifically, we assume that there exist $1<p<q<\infty$ such that
\begin{equation}\label{0777-2}
\frac{\Phi(x, s)}{s^p} \leq c_g\frac{\Phi(x, t)}{t^p} \quad \mbox{and}\quad \frac{\Phi(x, t)}{t^q}\le c_g \frac{\Phi(x, s)}{s^q} \qquad \mbox{whenever} \ 
0 < s \leq t\;,
\end{equation}
for some $c_g \geq 1$. We then have 
\begin{theorem}\label{L11}
Assume that \trif{controllo} and \trif{0777-2} are in force. Let $E\subset \mathbb{R}^{n}$ be such that $\mathcal{H}_{\f}(E)<\infty$, then $ {Cap}_{\Phi}(E)=0$.
\end{theorem}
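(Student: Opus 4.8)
The plan is to reduce to compact sets and then run the classical logarithmic–test–function argument of nonlinear potential theory, adapted to the Musielak--Orlicz setting through the structural assumptions \eqref{0777-2} and \eqref{controllo} (which is exactly why both exponents and the spatial comparability of $\Phi$ are needed).

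First I would reduce to compact sets. Since, by \eqref{cho}, ${Cap}_\Phi$ is inner regular on Borel sets and $\mathcal H_\Phi(K)\le\mathcal H_\Phi(E)<\infty$ for every compact $K\subset E$ (note that such $K$ is $\Subset\Omega$), it suffices to show ${Cap}_\Phi(K)=0$ for such $K$. The sub‑case $\mathcal H_\Phi(K)=0$ is immediate: given $\varepsilon>0$, \eqref{078} produces a finite cover $\{B_i\}$ of $K$ by balls with $\sum_i h_\Phi(B_i)<\varepsilon$; taking standard cut‑offs $\phi_i$ ($\phi_i\equiv1$ on $B_i$, $\phi_i\equiv0$ off $2B_i$, $|D\phi_i|\lesssim r(B_i)^{-1}$) and $f:=\max_i\phi_i\in\mathcal R(K)$, the pointwise inequality $\Phi(x,\max_ia_i)\le\sum_i\Phi(x,a_i)$ (monotonicity of $\Phi(x,\cdot)$) together with \eqref{0777-2} — used to replace $|D\phi_i|\approx r(B_i)^{-1}$ by a multiple of $\Phi(x,1/r(B_i))$ — and the doubling $\int_{2B}\Phi(x,1/r(B))\,dx\lesssim h_\Phi(B)$ (a consequence of \eqref{controllo}) give $\int_\Omega\Phi(x,|Df|)\,dx\lesssim\sum_ih_\Phi(B_i)<\varepsilon$. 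So from now on I may assume $0<\mathcal H_\Phi(K)<\infty$.

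The main construction exploits $p>1$. Fix $s_0$ small with $\{\dist(\cdot,K)<s_0\}\Subset\Omega$, a large $L\in\N$, and $t_k:=2^{-k}s_0$. For each $k\le L$ choose a finite cover $\mathcal B_k=\{B_{k,i}\}_i$ of $K$ with $r(B_{k,i})\le t_k$ and $\sum_ih_\Phi(B_{k,i})\le\mathcal H_\Phi(K)+1=:M$. With $d(x):=\dist(x,K)$, set $u:=\psi\circ d$, where $\psi\equiv1$ on $[0,t_L]$, $\psi\equiv0$ on $[s_0,\infty)$, and $\psi(\tau)=(L\log2)^{-1}\log(s_0/\tau)$ for $t_L\le\tau\le s_0$. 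Then $u\in\mathcal R(K)$ and $|Du|=(L\log2)^{-1}d(x)^{-1}$ on $\{t_L<d<s_0\}$. Splitting this set into the dyadic shells $\{t_{k+1}\le d<t_k\}$, on which $d\approx t_k$, \eqref{0777-2} yields $\Phi(x,|Du|)\lesssim L^{-p}\Phi(x,1/t_k)$ there; since each shell lies in $\bigcup_i2B_{k,i}$,
\[
{Cap}_\Phi(K)\le\int_\Omega\Phi(x,|Du|)\,dx\lesssim L^{-p}\sum_{k=0}^{L-1}\sum_i\int_{2B_{k,i}}\Phi(x,1/t_k)\,dx ,
\]
and passing from $2B_{k,i}$ to $B_{k,i}$ and from the exponent $1/t_k$ to $1/r(B_{k,i})$ via \eqref{controllo}, \eqref{0777-22} and the ensuing doubling of $h_\Phi$, the inner sum is $\lesssim\sum_ih_\Phi(B_{k,i})\le M$. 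Summing the $L$ shells and letting $L\to\infty$ gives ${Cap}_\Phi(K)\lesssim M\,L^{1-p}\to0$ because $p>1$. This is precisely the mechanism forcing vanishing $p$‑capacity from finite $\mathcal H^{n-p}$ in the model case $\Phi(x,t)=t^p$, the codimension‑$p$ transversal direction being encoded here in the lower bound $h_\Phi(B_\rho)\gtrsim\rho^{n-p}$ coming from \eqref{0777-1}--\eqref{0777-2}.

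The step I expect to be the real obstacle is the uniform‑in‑$k$ bound $\sum_i\int_{2B_{k,i}}\Phi(x,1/t_k)\,dx\lesssim M$: a near‑optimal cover $\mathcal B_k$ of $K$ may use balls with radius far below $t_k$, for which $\int_{2B_{k,i}}\Phi(x,1/t_k)\,dx\gg h_\Phi(B_{k,i})$, so the naive passage fails and the logarithmic profile must instead be built scale by scale directly on the nested families $\{\mathcal B_k\}_k$ — attaching to each covering ball its own logarithmic tail and taking the supremum, so that overlapping tails coalesce into a function comparable to $u$ — after which the resulting double sum over balls and scales is organized using the doubling of $h_\Phi$ from \eqref{controllo} together with covering/packing estimates for sets of finite $\mathcal H_\Phi$‑measure (here it is essential that the full measure $\mathcal H_\Phi(K)$, and not merely a single pre‑measure $\mathcal H_{\Phi,\kappa}(K)$, be finite). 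This bookkeeping, the Musielak--Orlicz counterpart of the classical argument (cf.\ \cite{BHH} and the references on anisotropic capacities quoted above), is where the weight of the proof lies; the remaining ingredients are the routine cut‑off, doubling and monotonicity manipulations indicated above.
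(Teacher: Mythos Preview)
Your distance-function construction has a genuine gap, but not quite the one you identify. Your self-diagnosis that ``$\int_{2B_{k,i}}\Phi(x,1/t_k)\,dx\gg h_\Phi(B_{k,i})$ when $r(B_{k,i})\ll t_k$'' is in fact backwards: since $1/t_k\le 1/r(B_{k,i})$ and $t\mapsto\Phi(x,t)$ is non-decreasing, monotonicity together with the doubling coming from \eqref{controllo} gives $\int_{2B_{k,i}}\Phi(x,1/t_k)\,dx\lesssim h_\Phi(B_{k,i})$ in that regime. The actual failure is the earlier claim that the shell $\{t_{k+1}\le d<t_k\}$ is contained in $\bigcup_i 2B_{k,i}$. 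The balls $B_{k,i}$ cover $K$, not a neighbourhood of it; if a near-optimal cover at scale $t_k$ happens to use balls of radius $\rho\ll t_{k+1}$, then $2B_{k,i}$ stays within distance $2\rho\ll t_{k+1}$ of $K$ and misses the shell entirely. There is no Frostman-type input here guaranteeing covers with $r(B_{k,i})\approx t_k$, so the shell decomposition of $u=\psi\circ d$ cannot be controlled by $\sum_i h_\Phi(B_{k,i})$ as written.

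The fix you gesture at in your last paragraph --- building the test function directly from the nested coverings rather than from the distance function --- is exactly what the paper does, and it is short once set up correctly. One first shows (using a single finite cover $\{B_{r_j}(x_j)\}$ of $K$ with $\sum_j h_\Phi(B_{r_j})\le 2(\mathcal H_\Phi(K)+1)$ and the tent functions $f_j$ you already describe) that there exist an open $W\supset K$ and $f\in\mathcal R(K)$ with $f\equiv 1$ on $W$, $\supp f\subset V$ and $\int_\Omega\Phi(x,|Df|)\,dx\le c_*$. Iterating this with $V$ replaced by the previous $W$ produces nested open sets $V_k$ and functions $\tilde f_k\in\mathcal R(K)$ with $\supp \tilde f_{k+1}\subset\{\tilde f_k=1\}$, so that the gradients $D\tilde f_k$ have pairwise disjoint supports. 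Then
\[
g_j:=\frac{1}{S_j}\sum_{k=1}^j\frac{\tilde f_k}{k},\qquad S_j:=\sum_{k=1}^j\frac1k,
\]
belongs to $\mathcal R(K)$, and by the disjointness of supports and \eqref{0777-2},
\[
\int_\Omega\Phi(x,|Dg_j|)\,dx=\sum_{k=1}^j\int_{V_k\setminus V_{k+1}}\Phi\Bigl(x,\tfrac{1}{S_jk}|D\tilde f_k|\Bigr)\,dx\le \frac{c_gc_*}{S_j^{\,p}}\sum_{k=1}^j\frac{1}{k^p}\longrightarrow 0
\]
as $j\to\infty$, since $p>1$. This is the same logarithmic mechanism you invoke, but implemented so that the support of each layer is manufactured from the cover itself, which eliminates the shell-covering issue altogether. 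No packing or Frostman argument is needed; the ``bookkeeping'' you anticipate collapses to the one-line energy estimate above.
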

It is now possible to \cd{improve the estimates of the Hausdorff measure of} the singular set $\Sigma_u:=\Omega\setminus \Omega_u$ from Theorem \ref{main1}. This is in the following:
\begin{theorem}\label{T3} Let $u \in W^{1,1}_{\loc}(\Omega,\SN)$ be a local minimizer of the functional $\mathcal F$ in \trif{genF} under the assumptions \trif{assF}-\trif{mainbound}, and let $\Omega_u\subset \Omega $ be its regular set in the sense of Theorem \ref{main1}. Assume that $q(1+\delta_g)\leq n$, where $\delta_g$ is the number appearing in \trif{maggioreiniziale}. Then
\eqn{zerom}
$$
\mathcal{H}_{H^{1+\delta_g}}(\Omega\setminus \Omega_u)=0 \quad \mbox{and therefore}\quad  {Cap}_{H^{1+\delta_g}}(\Omega\setminus \Omega_u)=0\;. 
$$
In particular, we have 
\eqn{zerop}
$$
\mathcal{H}^{n-p-p\delta_g}\left(\Sigma_u\right)=0\;,
$$
and
\eqn{zeroq}
$$
\mathcal{H}^{n-q-q\delta_g}\left(\Sigma_u \cap \{a(x)>0\}\right)=0\;. 
$$
\end{theorem}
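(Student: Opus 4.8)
\emph{Proof plan.} The statement splits into three parts: (A) the vanishing of $\mathcal{H}_{H^{1+\delta_g}}(\Omega\setminus\Omega_u)$; (B) the vanishing of ${Cap}_{H^{1+\delta_g}}(\Omega\setminus\Omega_u)$; and (C) the two classical Hausdorff estimates \eqref{zerop}--\eqref{zeroq}. I would treat (A) as the core, (B) as a direct corollary of Theorem \ref{L11}, and (C) as an elementary comparison. For (A), the guiding idea is the weighted analogue of the classical fact that an $L^1$ function cannot concentrate, at vanishing scale, on a set of positive Hausdorff measure: here the Hausdorff gauge is the set function $h_{H^{1+\delta_g}}(B)$ of \eqref{openballs}, the source measure is $H(\cdot,Du)^{1+\delta_g}\,\mathrm{d}x$, which lies in $L^1_{\mathrm{loc}}$ by the higher integrability \eqref{maggioreiniziale}, and the density information is supplied by the intrinsic characterization \eqref{careps2} of the singular set.

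\emph{Part (A).} Using \eqref{careps2} I would write $\Omega\setminus\Omega_u=\Sigma_u=\bigcup_{k\in\N}\Sigma^k$, with $\Sigma^k$ the set of points where the $\limsup$ in \eqref{careps2} exceeds $1/k$, and reduce --- by countable subadditivity of $\mathcal{H}_{H^{1+\delta_g}}$ and a countable exhaustion of $\Omega$ by balls $B_0\Subset\Omega$ --- to showing $\mathcal{H}_{H^{1+\delta_g}}(\Sigma^k\cap B_0)=0$ for each fixed $k$, $B_0$. Fix $\eta>0$. Since $|\Sigma_u|=0$ by \eqref{mainassertion} and $H(\cdot,Du)^{1+\delta_g}\in L^1_{\mathrm{loc}}(\Omega)$, absolute continuity of the integral together with outer regularity of Lebesgue measure yields an open $V$ with $\Sigma^k\cap B_0\subset V\Subset\Omega$ and $\int_V H(x,Du)^{1+\delta_g}\,\mathrm{d}x<\eta$. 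For any prescribed $\sigma\in(0,1/5]$ and each $x_0\in\Sigma^k\cap B_0$ there is a sequence of radii $\rho\downarrow0$ with $\rho<\sigma$, $\overline{B_{5\rho}(x_0)}\subset V$, and $\mint_{B_\rho(x_0)}H(x,Du)\,\mathrm{d}x>\tfrac1k H^{-}_{B_\rho(x_0)}(1/\rho)$. On such a ball I would bound $h_{H^{1+\delta_g}}(B_\rho(x_0))\le|B_\rho(x_0)|\,[H^{+}_{B_\rho(x_0)}(1/\rho)]^{1+\delta_g}$, use that $q<p+\alpha$ forces \eqref{controllo} for $H$ --- so $H^{+}_{B_\rho(x_0)}(1/\rho)\le c\,H^{-}_{B_\rho(x_0)}(1/\rho)$ with $c\equiv c(\data)$ for $\rho\le1$ --- and apply Hölder's inequality with exponent $1+\delta_g$ against the density bound to arrive at
\begin{equation}\label{pA-key}
h_{H^{1+\delta_g}}(B_\rho(x_0))\le c\,k^{1+\delta_g}\int_{B_\rho(x_0)}H(x,Du)^{1+\delta_g}\,\mathrm{d}x\,,\qquad c\equiv c(\data)\,.
\end{equation}
The resulting closed balls form a fine cover of $\Sigma^k\cap B_0$, and the basic $5r$-Vitali covering lemma extracts a countable pairwise disjoint subfamily $\{B_i\}$ with $\Sigma^k\cap B_0\subset\bigcup_i 5B_i$ and $\bigcup_i B_i\subset V$. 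Because $a(\cdot)\in C^{0,\alpha}$ and $q\le p+\alpha$, one has a doubling-type bound $h_{H^{1+\delta_g}}(5B)\le c(\data)\,h_{H^{1+\delta_g}}(B)$ for balls of radius $\le1$ --- proved by estimating $\sup_{5B}a\le\inf_{B}a+c\,r(B)^\alpha$ and absorbing the surplus through $r(B)^{\alpha-q}\le r(B)^{-p}$. Combining the doubling with \eqref{pA-key} and the disjointness of the $B_i$ gives
\begin{equation}\label{pA-sum}
\mathcal{H}_{H^{1+\delta_g},\,5\sigma}(\Sigma^k\cap B_0)\le\sum_i h_{H^{1+\delta_g}}(5B_i)\le c\,k^{1+\delta_g}\int_{\bigcup_i B_i}H(x,Du)^{1+\delta_g}\,\mathrm{d}x\le c\,k^{1+\delta_g}\,\eta\,.
\end{equation}
Letting $\sigma\to0$ via \eqref{078} and then $\eta\to0$ gives $\mathcal{H}_{H^{1+\delta_g}}(\Sigma^k\cap B_0)=0$, and summing over $k$ and over the balls $B_0$ proves the first part of \eqref{zerom}.

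\emph{Part (B).} I would apply Theorem \ref{L11} with $E=\Sigma_u$ and $\Phi=H^{1+\delta_g}$; this is legitimate because Part (A) gives $\mathcal{H}_{H^{1+\delta_g}}(\Sigma_u)=0<\infty$. It remains to verify the hypotheses of Theorem \ref{L11} for this $\Phi$: assumption \eqref{controllo} for $H^{1+\delta_g}$ is \eqref{controllo} for $H$ --- which holds because $q<p+\alpha$ --- raised to the power $1+\delta_g$; for \eqref{0777-2}, note $H^{1+\delta_g}(x,s)/s^{p(1+\delta_g)}=(1+a(x)s^{q-p})^{1+\delta_g}$ is non-decreasing in $s$ while $H^{1+\delta_g}(x,s)/s^{q(1+\delta_g)}=(s^{p-q}+a(x))^{1+\delta_g}$ is non-increasing in $s$, so \eqref{0777-2} holds with $c_g=1$ and exponents $1<p(1+\delta_g)<q(1+\delta_g)<\infty$; and the standing assumptions \eqref{misurabile}, \eqref{0777-1}, \eqref{0777-22} follow from the structure of $H^{1+\delta_g}$ using $q(1+\delta_g)\le n$ (for \eqref{misurabile}, via $t^p\le t^q$ for $t\ge1$ and the local boundedness of $a(\cdot)$). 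Then Theorem \ref{L11} yields ${Cap}_{H^{1+\delta_g}}(\Sigma_u)=0$, the second part of \eqref{zerom}.

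\emph{Part (C) and main obstacle.} For any ball $B$ with $r(B)\le1$ one has $h_{H^{1+\delta_g}}(B)=\int_B[r(B)^{-p}+a(x)r(B)^{-q}]^{1+\delta_g}\,\mathrm{d}x\ge|B|\,r(B)^{-p(1+\delta_g)}\simeq r(B)^{\,n-p-p\delta_g}$; feeding this into \eqref{077}--\eqref{078} and comparing with the spherical Hausdorff measure gives $\mathcal{H}^{\,n-p-p\delta_g}(E)\le c\,\mathcal{H}_{H^{1+\delta_g}}(E)$ for every $E\subset\ern$, and taking $E=\Sigma_u$ yields \eqref{zerop}. For \eqref{zeroq} I would write $\{a(x)>0\}=\bigcup_{m\in\N}\{a(x)>1/m\}$ and use continuity of $a(\cdot)$ to find $\kappa_m>0$ so that any ball $B$ of radius $\le\kappa_m$ meeting $\{a>1/m\}$ has $a\ge1/(2m)$ on $B$, hence $h_{H^{1+\delta_g}}(B)\ge(2m)^{-(1+\delta_g)}|B|\,r(B)^{-q(1+\delta_g)}\simeq_m r(B)^{\,n-q-q\delta_g}$; then $\mathcal{H}^{\,n-q-q\delta_g}(\Sigma_u\cap\{a>1/m\})\le c(m)\,\mathcal{H}_{H^{1+\delta_g}}(\Sigma_u)=0$, and $m\to\infty$ gives \eqref{zeroq} (both exponents are nonnegative since $p(1+\delta_g)<q(1+\delta_g)\le n$). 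The one genuinely delicate point is Part (A): it is a matter of arranging the weighted Vitali scheme so that the intrinsic $\eps$-regularity condition \eqref{careps2}, the higher integrability \eqref{maggioreiniziale} and Hölder's inequality lock together as in \eqref{pA-key}, and then transporting that estimate across ball enlargement --- which is precisely where the near-borderline bound $q<p+\alpha$ must be used, both for $H^{+}_B\lesssim H^{-}_B$ and for the doubling of $h_{H^{1+\delta_g}}$.
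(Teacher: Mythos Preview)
Your proof is correct, but it proceeds in the opposite logical direction from the paper's. The paper first establishes the classical estimates \eqref{zerop} and \eqref{zeroq} as part of Step 3 in the proof of Theorem \ref{main1} (see \eqref{dimensione-rid1}--\eqref{dimensione-rid2}), via the inclusion \eqref{ss} and Giusti's Lemma applied to the standard Hausdorff measures. It then \emph{derives} the intrinsic estimate $\mathcal{H}_{H^{1+\delta_g}}(\Sigma_u)=0$ from those, by splitting $\Sigma_u=\Sigma_u^p\cup\Sigma_u^q$ along the zero set of $a(\cdot)$ and using Proposition \ref{triviaros1} to reduce to $\mathcal{H}_{H^{1+\delta_g}}^-$: on $\Sigma_u^p$ every covering ball has $a_i(B)=0$ so $\mathcal{H}_{H^{1+\delta_g}}^-$ collapses to $\mathcal{H}^{n-p(1+\delta_g)}$, while on $\Sigma_{u,m}^q:=\Sigma_u\cap\{a>1/m\}$ small balls have $a_i(B)>1/(2m)$ so $\mathcal{H}_{H^{1+\delta_g}}^-$ is controlled by $\mathcal{H}^{n-q(1+\delta_g)}$.

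You instead run the Giusti/Vitali argument \emph{directly} for the weighted measure $\mathcal{H}_{H^{1+\delta_g}}$, using \eqref{careps2}, the higher integrability \eqref{maggioreiniziale}, the comparison $H_B^+\lesssim H_B^-$, and the doubling of $h_{H^{1+\delta_g}}$ (the latter two both coming from $q<p+\alpha$). You then read off \eqref{zerop}--\eqref{zeroq} by elementary gauge comparison. This is arguably more in the spirit of the paper's ``intrinsic'' philosophy: one argument at the level of the Musielak--Orlicz structure, with the classical Hausdorff statements as corollaries, and no phase decomposition of $\Sigma_u$ needed. The paper's route, on the other hand, reuses work already done inside Theorem \ref{main1} and keeps the new content of Theorem \ref{T3} to a short comparison argument.
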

\subsection{Overview of the paper}
As mentioned at the beginning of the Introduction, the aim of this paper is not only to prove regularity results for constrained \cc{local} minimizers of double phase functionals, but also to expose intrinsic techniques bound to cover general functionals of the type in \rif{PPhi}. In this sense, this paper further develops the ideas introduced in \cite{BCM3} to get general regularity methods for non-autonomous functionals and also simplifies some of the arguments presented there. Moreover, the techniques considered here provide new results also in the unconstrained case. For instance, a partial regularity theory which is analogous to the classical one for standard $p$-functionals can be derived in the double phase case too \ccc{(see Remark \ref{onlyre})}. The paper is structured as follows. In Section \ref{notationsec} we fix some notation. In Section \ref{sec3} we establish some basic energy and higher integrability inequalities adapting the path developed in \cite{CM1, CM2} to the manifold constrained case. This is based on a projection argument exposed in Lemma \ref{L4}. Moreover, we derive the precise form of the Euler-Lagrange equation of functionals of the type in \rif{genF}, under assumptions \cc{\rif{assF}-\rif{assF2}}. We finally readapt a Morrey type decay estimate originally proved in \cite{CM2} \ccc{(see Theorem \ref{morreydecayth})}. In Section \ref{harmonicsec} we develop an intrinsic harmonic type approximation result (compactness lemma), which is Lemma \ref{int}. The main novelty is that the energy bounds and the approximation are given directly in the intrinsic terms of a Musielak-Orlicz energy, rather that a more typical Orlicz one, as usually done in the literature \cite{DSV, DM1}. The lemma is quantitative, in the sense that it reveals a power type dependence of the constants. It therefore extends a similar result previously obtained in \cite{BCM3}, which was there considered in a more classical Orlicz setting. It is interesting to note that the proof of Lemma \ref{int} involves the use of an a priori smallness assumption (see \rif{har1} below) which is exactly the one which is needed to prove partial regularity in the subsequent Section \ref{morreydecayth}. The conceptual advantage of using such an approach becomes clear in Section \ref{morreydecayth}, where partial regularity and Theorem \ref{main1} are proved. The intrinsic approach adopted in Lemma \ref{int} allows \cc{avoiding} to readapt the elaborate arguments of \cite{BCM3, CM1, CM2} as at this point we can directly use the intrinsic Morrey decay estimate of Theorem \ref{morreydecayth} as a natural reference estimate. This incorporates the regularity information on the solutions indeed developed in \cite{BCM3, CM1, CM2}. The final outcome is a treatment which is close to the classical one proposed by Simon \cite{Simon} in the case \cd{of} harmonic maps. Finally, in Section \ref{cap-proofs} we develop the arguments concerning the Hausdorff type measures presented in Section \ref{weight}.

\section{Notation}\label{notationsec}
In this paper, following a usual custom, we denote by $c$ a general constant larger than one. Different occurences from line to line will be still denoted by $c$, while special occurrences will be denoted by $c_1, c_2,  \tilde c$ or the like. Relevant
dependencies on parameters will be \cc{emphasized} using parentheses, i.e., ~$c_{1}\equiv c_1(n,p,\nu, L)$ means that $c_1$ depends on $n,p,\nu, L$. We denote by $ B_r(x_0):=\{x \in \er^n \, : \,  |x-x_0|< r\}$ the open ball with center $x_0$ and radius $r\equiv r(B)>0$; when not important, or clear from the context, we shall omit denoting the center as follows: $B_r \equiv B_r(x_0)$. Very often, different balls in the same context will share the same center. We shall also denote $B_1 = B_1(0)$
if not differently specified. \cd{Moreover}, with $B$ being a given ball with radius $r\equiv r(B)$ and $\gamma$ being a positive number, we denote by $\gamma B$ the concentric ball with radius $\gamma r$. \cd{Finally, when referring to balls in $\mathbb{R}^{N}$, we will stress it with the apex "$N$", i.e.: $B_{r}^{N}(a_{0})$ is the open ball with center $a_{0}\in \mathbb{R}^{N}$ and positive radius $r$.} With $\mathcal B \subset \er^{n}$ being a measurable subset \cc{having} finite and positive measure $|\mathcal B|>0$, and with $g \colon \mathcal B \to \er^{k}$, $k\geq 1$, being a measurable map, we shall denote by  $$
   (g)_{\mathcal B} \equiv \mint_{\mathcal B}  g(x) \, dx  := \frac{1}{|\mathcal B|}\int_{\mathcal B}  g(x) \, dx
$$
its integral average. Similarly, with $\gamma \in (0,1)$ we denote 
$$
[g]_{0,\gamma; \mathcal B} := \sup_{x,y \in \mathcal B, x \not= y} \, 
\frac{|g(x)-g(y)|}{|x-y|^\gamma}\;.$$
In the case of the reference coefficient function $a(\cdot)$ in \rif{modello}, we shall denote $[a]_{0, \alpha}\equiv [a]_{0, \alpha; \Omega}$. As usual when dealing with $p$-Laplacean type problems, we shall often use the auxiliary vector fields $V_p,V_q\colon \er^n \to \er^n$ defined by
\eqn{vpvq}
$$
V_p(z):= |z|^{(p-2)/2}z \qquad \mbox{and}\qquad V_q(z):= |z|^{(q-2)/2}z
$$
whenever $z \in \ernN$; we notice that 
\eqn{elemH}
$$H(x, z)= \snr{V_{p}(z)}^2+a(x)\snr{V_{q}(z)}^2\;, \qquad \cc{\mbox{for all}} \ x\in \Omega,\ z\in \ernN\;.$$
A useful related inequality is contained in the following
\begin{flalign}\label{V}
\snr{V_{t}(z_{1})-V_{t}(z_{2})}\approx (\snr{z_{1}}+\snr{z_{2}})^{(t-2)/2}\snr{z_{1}-z_{2}}, \quad t \in \{p,q\}\;,
\end{flalign}
where the equivalence holds up to constants depending only on $n,N,t$. As a consequence of \rif{assF}$_3$, it can be proved that 
\eqn{monotonicity}
$$ 
|V_p(z_1)-V_p(z_2)|^2 + a(x) |V_q(z_1)-V_q(z_2)|^2  \leq c \left[ \partial F(x,v,z_1)-\partial F(x, v,z_2)\right]\cdot ( z_1-z_2)
$$
holds whenever $z_1, z_2 \in \ernN$, $x\in \Omega$, $v\in \erN$ and  
with $c \equiv c (n,N,\nu, p,q)$. For this see for instance \cite{BCM3, KM1}. We similarly have, again from $\rif{assF}_{3}$ 
\begin{flalign}\label{pre}
&\snr{V_{p}(z_{2})-V_{p}(z_{1})}^{2}+a(x)\snr{V_{q}(z_{2})-V_{q}(z_{1})}^{2}
+\partial F(x,v,z_1)\cdot (z_{2}-z_{1})\nonumber \\
&\quad \quad \quad \le c\left[F(x,v, z_{2})-F(x,v,z_{1})\right]\;,
\end{flalign}
\cc{whenever $z_{1},z_{2}\in \mathbb{R}^{N\times n}$,} again for $c\equiv c (n,N,\nu, p,q)$. We next recall some basic terminology about \cc{Musielak-Orlicz-Sobolev spaces}. The space \cc{$W^{1, H}(\Omega,\RN)$} is defined as in \rif{Orlicz-Sob} with the choice $\Phi(\cdot)\equiv H(\cdot)$, with the local variants being defined in the obvious way and $W^{1, H}_0(\Omega)=W^{1, H}(\Omega)\cap W^{1,p}_0(\Omega)$. In the same way, we set 
$$\cc{W^{1, H}(\Omega,\SN):= \left\{\ w \in W^{1,H}(\Omega,\er^N)\, \colon \, |w|=1 \ \mbox{holds a.e.  }\right\}\;,}$$
with the local variants defined in a similar fashion. Finally, with $u \in W^{1, H}(\Omega,\SN)$ we denote the Dirichlet class 
$$\cc{W^{1, H}_u(\Omega,\SN):= \left\{\ w \in W^{1,H}(\Omega,\SN)\, \colon \, u-w \in W^{1, 1}_0(\Omega,\er^N) \right\}\; \ }.$$
We similarly define the Dirichlet class of unconstrained maps $W^{1, H}_u(\Omega,\er^N)$.  
Moreover, with \cd{$w \in W^{1,H}(\tilde{\Omega},\er^N)$ and $\tilde{\Omega}$} being a domain that allows for a trace operator (for instance, this happens when \cd{$\partial\tilde{\Omega}$} is Lipschitz), we denote by \cd{$\texttt{tr}(w,\partial \tilde{\Omega})$} the trace of \cc{$w$ on $\partial \tilde{\Omega}$}.

\section{Basic material}\label{sec3}
\subsection{Caccioppoli's and higher integrability inequalities}\label{cacciosec}
Following the path established in  \cite{CM1, CM2}, in this section we gather a few technical inequalities for minimizers of functionals with double 
phase. The main difference is that now the setting is the one of constrained variational problems. Therefore, in several cases, we shall confine ourselves to give the necessary modifications to the proofs proposed in \cite{CM1, CM2}. 
We start with the following lemma; this provides an extension result in the spirit of \cite{HKL}.
\begin{lemma}\label{L4}
Let $\tilde \Omega \subset \Omega$ be a bounded, Lipschitz domain in $\mathbb{R}^{n}$ and $v\in W^{1,H}(\tilde \Omega,\RN)$ be such that $v(\partial \tilde \Omega)\subset \SN$. Then there exists $c\equiv c(n,N,p,q)$ and $\tilde{v}\in W^{1,H}(\tilde \Omega,\SN)$ satisfying
\begin{flalign}\label{023}
\int_{\tilde \Omega}H(x,D\tilde{v}) \, dx\le c\int_{\tilde \Omega}H(x,Dv) \, dx \quad and \quad v-\tilde{v} \in W^{1,1}_0(\tilde \Omega,\RN)\;.
\end{flalign}
\end{lemma}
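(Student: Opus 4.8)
The plan is to construct $\tilde v$ by the classical radial projection (also called ``$0$-homogeneous extension after projection'') device used by Hardt--Lin and Luckhaus, and then verify the double-phase energy bound through an averaging-over-centers argument. First I would recall that the near-identity retraction of an annulus onto the sphere is not Lipschitz near the origin, so one cannot simply post-compose $v$ with a fixed projection. Instead, for $a_0 \in B_{1/2}^N(0)$, define $\pi_{a_0}\colon \mathbb R^N\setminus\{a_0\}\to \SN$, $\pi_{a_0}(y):= a_0 + (y-a_0)/\snr{y-a_0}$, and set $v_{a_0}:=\pi_{a_0}\circ v$. Since $v(\partial\tilde\Omega)\subset\SN$ and $\snr{a_0}<1/2$, one has $\pi_{a_0}=\mathrm{id}$ on $\partial\tilde\Omega$, so $v_{a_0}-v\in W^{1,1}_0(\tilde\Omega,\RN)$; and $\snr{v_{a_0}}=1$ a.e., so $v_{a_0}\in W^{1,H}(\tilde\Omega,\SN)$ provided the energy is finite. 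The pointwise chain-rule bound is $\snr{Dv_{a_0}(x)}\le c(N)\,\snr{Dv(x)}/\snr{v(x)-a_0}$, hence $H(x,Dv_{a_0}(x))\le c\,H(x,Dv(x))\,\snr{v(x)-a_0}^{-q}$ (using $\snr{z}^p+a(x)\snr{z}^q$ and $q>p$, so the worst power is $q$, and $\snr{v(x)-a_0}^{-1}$ may exceed $1$).

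Next I would integrate in $a_0$ over $B_{1/2}^N(0)$ and use Fubini:
\begin{flalign*}
\mint_{B_{1/2}^N(0)}\!\!\int_{\tilde\Omega}H(x,Dv_{a_0})\,dx\,da_0
\le c\int_{\tilde\Omega}H(x,Dv)\left(\mint_{B_{1/2}^N(0)}\frac{da_0}{\snr{v(x)-a_0}^{q}}\right)dx\,.
\end{flalign*}
For fixed $x$ one has $\snr{v(x)}=1$, so $v(x)\notin B_{1/2}^N(0)$, and the inner integral is dominated by $\int_{B_1^N(0)}\snr{w}^{-q}\,dw<\infty$ precisely because $q<N$ — this is where the hypothesis $q<N$ in \rif{mainbound} enters, exactly as flagged in the text. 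Therefore the average over $a_0$ of the left-hand side is $\le c(n,N,p,q)\int_{\tilde\Omega}H(x,Dv)\,dx$, so there exists at least one $a_0\in B_{1/2}^N(0)$ — in fact a set of positive measure — for which $\int_{\tilde\Omega}H(x,Dv_{a_0})\,dx\le c\int_{\tilde\Omega}H(x,Dv)\,dx$. Fixing such an $a_0$ and setting $\tilde v:=v_{a_0}$ yields both assertions of \rif{023}.

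The step I expect to require the most care is the rigorous chain-rule / truncation justification: $v\in W^{1,H}(\tilde\Omega,\RN)\hookrightarrow W^{1,p}(\tilde\Omega,\RN)$ but $\pi_{a_0}$ is only smooth away from $a_0$, so $v_{a_0}=\pi_{a_0}\circ v$ is a composition of a Sobolev map with a map that is merely locally Lipschitz off a point. The clean way is to note that the integrated estimate above shows $\snr{v(x)-a_0}^{-q}\snr{Dv(x)}^p\in L^1$ for a.e.\ choice of $a_0$ (again via Fubini and $q<N$), which guarantees that for such $a_0$ the set $\{v=a_0\}$ has measure zero and $v_{a_0}\in W^{1,1}(\tilde\Omega,\RN)$ with the expected gradient a.e.; standard approximation (mollifying $v$, or truncating $\pi_{a_0}$ on $\{\snr{y-a_0}\ge\eta\}$ and letting $\eta\to0$) then legitimizes both the formula for $Dv_{a_0}$ and the boundary condition $v_{a_0}-v\in W^{1,1}_0$. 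Everything else is Fubini and the elementary bound $\int_{B_1^N}\snr{w}^{-q}\,dw\le c(N,q)$ for $q<N$; the constant in \rif{023} depends only on $n,N,p,q$ as claimed, since $H$ enters only through the homogeneity degrees $p,q$ and the pointwise inequality is uniform in $a(\cdot)$.
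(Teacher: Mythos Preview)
Your overall strategy—averaging the projection center $a_0$ over $B_{1/2}^N(0)$ and using $q<N$ to integrate $\snr{v(x)-a_0}^{-q}$—is exactly the paper's argument. However, there is a genuine gap in your construction of $\pi_{a_0}$. The formula $\pi_{a_0}(y)=a_0+(y-a_0)/\snr{y-a_0}$ maps into the unit sphere \emph{centered at $a_0$}, not into $\SN$; in particular $\snr{v_{a_0}}=1$ fails. If you drop the ``$a_0+$'' and take $\pi_{a_0}(y)=(y-a_0)/\snr{y-a_0}$, then $v_{a_0}$ does land in $\SN$, but now $\pi_{a_0}$ is \emph{not} the identity on $\SN$: for $y\in\SN$ and $a_0\neq 0$ one has $\snr{y-a_0}\neq 1$ in general, so $\pi_{a_0}(y)\neq y$. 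Consequently the boundary condition $v_{a_0}-v\in W^{1,1}_0(\tilde\Omega,\RN)$ is not satisfied, and your argument breaks down precisely at the point you need both requirements simultaneously.

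The paper's fix is to keep $v^{a_0}:=(v-a_0)/\snr{v-a_0}\in\SN$ (your corrected map), observe that the restriction $\Pi_{a_0}:=\pi_{a_0}|_{\SN}\colon \SN\to\SN$ is a \emph{bilipschitz homeomorphism} of $\SN$ onto itself with Lipschitz constants bounded uniformly in $a_0\in B_{1/2}^N(0)$, and then set $\tilde v:=\Pi_{a_0}^{-1}\circ v^{a_0}$. On $\partial\tilde\Omega$ one has $v\in\SN$, hence $v^{a_0}=\Pi_{a_0}(v)$ there, and so $\tilde v=\Pi_{a_0}^{-1}(\Pi_{a_0}(v))=v$ on $\partial\tilde\Omega$. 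The bilipschitz bound on $\Pi_{a_0}^{-1}$ transfers the energy estimate for $v^{a_0}$ (obtained by your averaging argument) to $\tilde v$, at the cost of another constant $c(N)$. This extra composition with $\Pi_{a_0}^{-1}$ is the missing ingredient in your proposal.
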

\begin{proof}
\cd{For $a\in B^{N}_{1/2}(0)\subset \mathbb{R}^{N}$} and $v$ as in the statement of the lemma, define the map 
$$ 
\cc{v^{a}(x):=\frac{v(x)-a}{\snr{v(x)-a}}\,,\qquad x \in \tilde \Omega\;.}$$
Clearly, it is 
$$
\snr{Dv^{a}(x)}\le \frac{2\snr{Dv(x)}}{\snr{v(x)-a}}\,,$$
\cc{so that we can estimate}
\begin{eqnarray*}
\int_{B^{N}_{1/2}(0)}H(x, Dv^{a}) \ da&\le &c\int_{B^{N}_{1/2}(0)}H\left(x,\frac{Dv}{|v-a|}\right) \ da\nonumber \\
& =  &c\left(\snr{Dv}^{p}\int_{B_{1/2}^{N}(0)}  \frac{da}{\snr{v-a}^{p}}+a(x)\snr{Dv}^{q}
\int_{B^{N}_{1/2}(0)} \frac{da}{\snr{v-a}^{q}}\right) \le cH(x,Dv)\;.
\end{eqnarray*}
Here $c\equiv c(N,p,q)$ and we have used the assumption \ccc{$\rif{mainbound}_{2}$}; this makes the integrals in the above line finite. Integrating over $\tilde \Omega$, using Fubini's theorem and the content of the last display, we obtain
$$
\mint_{B_{1/2}^{N}(0)}\int_{\tilde \Omega}H(x,Dv^{a}) \, dx\, da\le c\int_{\tilde \Omega}H(x,Dv) \, dx\;.
$$
\cc{By Chebyshev inequality, this} yields the existence of $a_{0}\in B^{N}_{1/2}(0)$ such that
\begin{flalign}\label{021}
\int_{\tilde \Omega}H(x,Dv^{a_{0}}) \, dx \le c\int_{\tilde \Omega}H(x,Dv) \, dx\;,
\end{flalign}
with $c\equiv c(n,N,p,q)$. Let us consider the projector
\begin{flalign*}
\cc{\Pi_{a}(y):=\frac{y-a}{\snr{y-a}}, \quad \mbox{for} \quad y \in \SN \ \mbox{and} \ a \in B^{N}_{1/2}(0)\;.}
\end{flalign*}
\cd{Such a projector} is a bilipschitz map $\SN$ into itself, and it is such that 
\begin{flalign}\label{lip}
\cd{\left[\nabla (\Pi_{a}^{-1}) \right]_{0,1}\le c=c(N)\;,}
\end{flalign}
an estimate which is independent of $a\in B^{N}_{1/2}(0)$. \cd{Since $v^{a}(x)\in \mathbb{S}^{N-1}$ for a.e. $x\in \Omega$} \ccc{and all $a\in B^{N}_{1/2}(0)$}, we may define \cd{$\tilde{v}:=\Pi_{a_{0}}^{-1}\circ v^{a_{0}}$} which has the requested features. In fact, since $v(\partial \tilde \Omega)\subset \SN$, we have
%$$
%\left. \tilde{v}\right |_{\partial \tilde \Omega}=\Pi_{a_{0}}^{-1}\left(\left.v^{a_0}\right |_{\partial \tilde \Omega}\right)=\Pi_{a_{0}}^{-1}\left(\left.\Pi_{a_{0}}\eft(v\right |_{\partial \tilde \Omega}\right)\right)=\left.v\right |_{\partial \tilde \Omega},
%$$
$$
\cd{\texttt{tr} (\tilde{v},\partial \tilde{\Omega})=\Pi_{a_{0}}^{-1}\left(\texttt{tr}(v^{a_0},\partial\tilde{\Omega})\right)=\Pi_{a_{0}}^{-1}\left(\Pi_{a_{0}}\left(\texttt{tr}(v,\partial \tilde{\Omega})\right)\right)=\texttt{tr}(v,\partial \tilde{\Omega})}
$$
and, by \rif{lip} and \rif{021},
\begin{flalign*}
\int_{\tilde \Omega}H(x,D\tilde{v}) \, dx\le c\int_{\tilde \Omega}H(x,Dv^{a_0}) \, dx\le c\int_{\tilde \Omega}H(x,Dv) \, dx\;,
\end{flalign*}
where $c\equiv c(n,N,p,q)$, so that \rif{023} is proved in view of the last two displays.
\end{proof}
\begin{remark}\label{onlyre}
\emph{The condition $q<N$ in \trif{cond-pq} enters only in the proof of the above lemma and therefore can be dropped when  adapting the proofs given here to the unconstrained case.}
\end{remark}
Lemma \ref{L4} allows to derive in the new constrained setting a number of preliminary tools that have been already obtained and used in the unconstrained one \cite{CM1, CM2}. We shortly report them, with some additional modification and informations. 
\begin{lemma}[Caccioppoli's Inequality]\label{L5}
Let $u \in W^{1,H}(\Omega,\SN)$ be a constrained local minimizer of the functional $\mathcal F$ in \trif{genF} under (only) assumptions \trif{assF}$_1$ and $q\leq p+\alpha$. Then there exists \ccc{$c\equiv c(n,N,\nu,L,p,q)>0$} such that for any choice of concentric balls $B_{r}\subset B_{R}\Subset \Omega$ there holds
\begin{flalign}\label{caccine}
\int_{B_{r}}H(x, Du) \, dx\le c\int_{B_{R}}H\left(x,\frac{u-(u)_{B_{R}}}{R-r} \right) \, dx
\end{flalign}
and, if $R\leq 1$, it also holds that
\eqn{caccine2dopo}
$$
\int_{B_{R/2}}H(x, Du) \, dx\le c\int_{B_{R}}H^{-}_{B_R}\left(\frac{u-(u)_{B_{R}}}{R} \right) \, dx\;,
$$
for \cc{$c\equiv c(n,N,\nu,L,p,q, [a]_{0, \alpha})>0$}. 
Moreover, if
\begin{flalign}\label{condizione}
\cd{\inf_{B_R}\, a(x)\leq 4 [a]_{0, \alpha}R^\alpha}
\end{flalign}
\cd{holds and again it is} $R\leq 1$, then \eqref{caccine2dopo} reduces to
\begin{flalign}\label{caccine2}
\int_{B_{R/2}}H(x, Du) \, dx\le c\int_{B_{R}}\left|\frac{u-(u)_{B_{R}}}{R} \right|^p \, dx\;,
\end{flalign}
with \cd{$c \equiv c (n,N,\nu,L,p,q,[a]_{0, \alpha}) $}. Finally, these facts still hold for an unconstrained local minimizer $u \in (W^{1,H}\cap L^{\infty})(\Omega,\er^{N})$, with all the constants depending in addition on $\|u\|_{L^{\infty}}$. 
\end{lemma}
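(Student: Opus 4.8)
The plan is to reproduce, in the constrained setting, the classical Caccioppoli scheme for double phase functionals from \cite{CM1, CM2}, the only new ingredient being that the natural comparison map has to be first projected onto $\SN$ via Lemma \ref{L4}. Fix concentric balls $B_r\subset B_R\Subset\Omega$ and, for radii $r\le\rho<\sigma\le R$, choose a cut-off $\eta\in C^{\infty}_c(B_\sigma)$ with $\mathbf 1_{B_\rho}\le\eta\le\mathbf 1_{B_\sigma}$ and $\snr{D\eta}\le c/(\sigma-\rho)$. Setting $w:=u-(u)_{B_R}$ --- crucially keeping the average over the \emph{largest} ball $B_R$ throughout --- I would first consider the unconstrained competitor $v:=u-\eta w$, which satisfies $Dv=0$ on $B_\rho$, equals $u$ on $B_\sigma\setminus\supp\eta$, and belongs to $W^{1,H}(B_\sigma,\RN)$ because $u$ is bounded; in particular $\texttt{tr}(v,\partial B_\sigma)=\texttt{tr}(u,\partial B_\sigma)\subset\SN$, at least for a.e.\ $\sigma$, which is all that is needed below. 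Applying Lemma \ref{L4} on $\tilde\Omega=B_\sigma$ then yields $\tilde v\in W^{1,H}(B_\sigma,\SN)$ with $v-\tilde v\in W^{1,1}_0(B_\sigma,\RN)$ and $\int_{B_\sigma}H(x,D\tilde v)\,dx\le c\int_{B_\sigma}H(x,Dv)\,dx$, $c\equiv c(n,N,p,q)$; gluing $\tilde v$ with $u$ outside $B_\sigma$ produces a competitor admissible in Definition \ref{defimain}, supported in $B_\sigma\Subset\Omega$.

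Testing minimality against this competitor and using the two-sided bound \trif{assF}$_1$ gives $\nu\int_{B_\sigma}H(x,Du)\,dx\le L\int_{B_\sigma}H(x,D\tilde v)\,dx\le cL\int_{B_\sigma}H(x,Dv)\,dx$. Since $Dv=(1-\eta)Du-(D\eta)w$ vanishes on $B_\rho$, the elementary inequalities $H(x,z_1+z_2)\le 2^{q-1}(H(x,z_1)+H(x,z_2))$ and $H(x,\lambda z)\le H(x,z)$ for $\lambda\in[0,1]$ give $\int_{B_\sigma}H(x,Dv)\,dx\le c\int_{B_\sigma\setminus B_\rho}H(x,Du)\,dx+c\int_{B_R}H(x,w/(\sigma-\rho))\,dx$. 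Writing $\int_{B_\sigma\setminus B_\rho}H(x,Du)=\int_{B_\sigma}H(x,Du)-\int_{B_\rho}H(x,Du)$ and rearranging produces the hole-filled inequality
\[
\int_{B_\rho}H(x,Du)\,dx\le\theta\int_{B_\sigma}H(x,Du)\,dx+\frac{c}{(\sigma-\rho)^p}\int_{B_R}\snr{w}^p\,dx+\frac{c}{(\sigma-\rho)^q}\int_{B_R}a(x)\snr{w}^q\,dx,
\]
with $\theta:=(cL-\nu)/(cL)\in(0,1)$. A standard iteration lemma on $[r,R]$ then absorbs the first term on the right and delivers \trif{caccine}, with a constant depending only on $n,N,\nu,L,p,q$, since the oscillation of $a(\cdot)$ has not been used at this stage.

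To pass from \trif{caccine} to \trif{caccine2dopo}, I would take $r=R/2$ and $R\le1$ and compare $H$ with $H^{-}_{B_R}$ on $B_R$: from $a(x)\le a_{i}(B_R)+[a]_{0,\alpha}(2R)^\alpha$, together with $q-\alpha\le p$ and $R\le1$ (so that $R^{-(q-\alpha)}\le R^{-p}$) and the constraint $\snr{u}=1$ (so that $\snr{w}\le2$, hence $\snr{w}^q\le2^{q-p}\snr{w}^p$), one gets $H(x,w/R)\le c\,H^{-}_{B_R}(w/R)$ pointwise on $B_R$, which is \trif{caccine2dopo}, now with $c$ depending also on $[a]_{0,\alpha}$. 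If in addition $a_{i}(B_R)\le4[a]_{0,\alpha}R^\alpha$, the same two estimates give $H^{-}_{B_R}(w/R)\le c\snr{w/R}^p$, whence \trif{caccine2}. Finally, for an unconstrained bounded minimizer one simply drops Lemma \ref{L4} --- the competitor $v=u-\eta w$ is already admissible --- and replaces $\snr{w}\le2$ by $\snr{w}\le2\norma{u}_{L^\infty}$, so all the constants acquire a dependence on $\norma{u}_{L^\infty}$; see also Remark \ref{onlyre}, where it is noted that $q<N$ enters only through Lemma \ref{L4}.

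The only genuinely non-routine step is the construction of an \emph{admissible}, i.e.\ $\SN$-valued, competitor with controlled double phase energy and the prescribed trace: the usual affine-corrected map $u-\eta(u-(u)_{B_R})$ leaves the target sphere, and repairing this is exactly the purpose of Lemma \ref{L4}. Once such a competitor is available, the remainder is the by-now standard $(p,q)$-growth hole-filling plus iteration, the only further care needed being the (harmless) observation that $\texttt{tr}(u,\partial B_\sigma)\subset\SN$ is required only along the countable sequence of radii used to run the iteration lemma.
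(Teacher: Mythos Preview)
Your proposal is correct and follows essentially the same route as the paper's proof: construct the unconstrained competitor $u-\eta(u-(u)_{B_R})$, project it onto $\SN$ via Lemma \ref{L4}, test minimality together with \trif{assF}$_1$, then fill the hole and iterate as in \cite{CM1}; the passage to \trif{caccine2dopo} and \trif{caccine2} via $a(x)\le a_i(B_R)+2^\alpha[a]_{0,\alpha}R^\alpha$ and $\snr{u-(u)_{B_R}}\le 2$ is also identical. Your caveat that $\texttt{tr}(u,\partial B_\sigma)\subset\SN$ only ``for a.e.\ $\sigma$'' is unnecessary---since $\eta\in C^\infty_c(B_\sigma)$, the trace of $v$ equals that of $u$ on $\partial B_\sigma$ for every $\sigma$---but harmless.
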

\begin{proof} The proof is a modification of the one originally given in \cite[Theorem 1.1, (1.8)]{CM1}; we furthermore specialize to the case of constrained minimizers, the unconstrained one being totally analogous. In the following all the balls will be concentric to the ones mentioned in the statement of the lemma. 
With $r\le t<s \le R$, we determine a cut-off function $\eta\in C^{\infty}_{c}(B_{s})$ such that
$ 
\chi_{B_{t}}\le \eta\le \chi_{B_{s}}$ and $\snr{D\eta}\le 4/(s-t).
$ 
Consider now the function
$
w (x)=u(x)-\eta(u-(u)_{B_{R}}).
$
Since $\eta$ is smooth and $u \in W^{1,H}(B_{s},\SN)$, then obviously $w  \in W^{1,H}(B_{s},\RN)$ and 
$
u-w  \in W^{1,1}_0(B_s,\RN).
$
Lemma \ref{L4} yields the existence of $\tilde{w }\in W^{1,H}(B_{s},\SN)$ such that
\rif{023} holds with $\tilde \Omega \equiv B_s$, 
where $c\equiv c(n,N,p,q)$. The minimality of $u$, \trif{assF}$_1$ and \rif{023} 
(with  $\Omega \equiv B_s$) yield
\begin{eqnarray}
\notag \nu\int_{B_{s}}H(x, Du) \, dx&\le&\int_{B_{s}}F(x,u,Du) \, dx\leq \int_{B_{s}}F(x,\tilde{w },D\tilde{w }) \, dx
\le  L\int_{B_{s}}H(x, D\tilde{w }) \, dx\\
 \notag & \le&  c\int_{B_{s}}H(x,Dw ) \, dx\le c\int_{B_{s}\setminus B_{t}}\left[H(x,(1-\eta)Du)+H(x, (u-(u)_{B_{R}})\otimes D\eta) \right]\, dx\nonumber \\
&  \leq &c\int_{B_{s}\setminus B_{t}}H(x, Du) \, dx+c\int_{B_{s}\setminus B_{t}}H\left(x,
\frac{u-(u)_{B_{R}}}{s-t} \right) \, dx\;,\label{bigrowth}
\end{eqnarray}
with \ccc{$c\equiv c(N,\nu,L,p,q)$}. The proof of \rif{caccine} can be now concluded by filling the hole and iteration, as in \cite[Theorem 1.1, (1.8)]{CM1}, 
see also \cite{CM2}. As for the proof of \rif{caccine2dopo} we simply estimate (as it is $q<p+\alpha$ and $R\leq 1$), for $x \in B_R$
\begin{eqnarray*}
H\left(x,\frac{u-(u)_{B_{R}}}{R} \right)& \leq & H_{B_R}^{-} \left(\frac{u-(u)_{B_{R}}}{R} \right)
+\sup_{B_{R}}\, \left[a(x)-\ai(B_R)\right] \left|\frac{u-(u)_{B_{R}}}{R} \right|^q\\
&\leq & H_{B_R}^{-} \left(\frac{u-(u)_{B_{R}}}{R} \right)
+\ccc{2^{\alpha+q-p} [a]_{0, \alpha} R^{\alpha+p-q} \left|\frac{u-(u)_{B_{R}}}{R} \right|^p}\leq 
cH_{B_R}^{-} \left(\frac{u-(u)_{B_{R}}}{R} \right)\;,
\end{eqnarray*}
\ccc{for $c=c(p,q,\alpha)$,} and \cd{\rif{caccine2dopo}} follows from \rif{caccine} with \cd{$r=R/2$}. Finally, for \rif{caccine2}, we similarly observe that (still $x \in B_R$)
$$
\cd{\ai(B_R) \left|\frac{u-(u)_{B_{R}}}{R} \right|^q \stackleq{condizione} 8[a]_{0, \alpha} R^{\alpha+p-q} \left|\frac{u-(u)_{B_{R}}}{R} \right|^p\leq c  \left|\frac{u-(u)_{B_{R}}}{R} \right|^p\;,}
$$
so that \cd{\rif{caccine2}} follows from \rif{caccine2dopo} and the proof is complete. 
\end{proof}
We proceed with
\begin{lemma}[Intrinsic Sobolev-Poincar\'e inequality]\label{poinclemma} 
Let $v \in (W^{1,H}\cap L^{\infty})(\Omega,\erN)$, $N\geq 1$, and $B_r\Subset \Omega $ be a ball with radius $r \leq 1$, and assume that $q\leq p+\alpha$. Then the following inequality holds \begin{flalign}\label{poinc}
\mint_{B_{r}}H\left(x,\frac{v-(v)_{B_{r}}}{r}\right) \, dx \le c\left(\mint_{B_{r}}[H(x,Dv)]^{d} \, dx\right)^{1/d}\;,
\end{flalign}
where \cc{$c\equiv c(n,N,p,q,[a]_{0,\alpha}, \|v\|_{L^{\infty}(B_r)})\geq 1$} and $d\equiv d(n,p,q)< 1$. In \trif{poinc} we can replace $v-(v)_{B_{r}}$ by $v$ in case we also have that 
\cd{$\texttt{tr} (v,\partial B_{r}) \equiv 0$}. Finally, we can still replace $v-(v)_{B_{r}}$ by $v$, provided $v\equiv 0$ on $A\subset B_r$ and $|A|/|B_r|>\gamma >0$; in this last case the constant $c$ depends also on $\gamma$. 
\end{lemma}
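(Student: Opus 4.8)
The plan is to run the double phase dichotomy of \cite{CM1, CM2}: on $B_r$ the coefficient $a(\cdot)$ is either essentially negligible or comparable to a positive constant, and in each regime one reduces $H(x,\cdot)$ to a simpler integrand and then applies a classical Sobolev--Poincar\'e inequality. Fix $B_r$, set $a_0:=\ai(B_r)=\inf_{B_r}a$, and note that $a\in C^{0,\alpha}$ gives $a(x)\le a_0+[a]_{0,\alpha}(2r)^{\alpha}$ for every $x\in B_r$. Assuming, as we may, $1<p<n$ (the range $p\ge n$ being easier, via $W^{1,p}\hookrightarrow L^{\infty}$ and a Morrey-type estimate, and not needed in the sequel), I distinguish the cases $a_0\le 4[a]_{0,\alpha}r^{\alpha}$ and $a_0>4[a]_{0,\alpha}r^{\alpha}$.

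In the first ($p$-phase) case the above bound yields $a(x)\le c[a]_{0,\alpha}r^{\alpha}$ on $B_r$. Evaluating $H(x,\cdot)$ at $z=(v-(v)_{B_r})/r$, for which $\snr{z}\le 2\|v\|_{L^{\infty}(B_r)}/r$, and using $q\le p+\alpha$ together with $r\le 1$, I would estimate
$$
a(x)\snr{z}^{q}\le c[a]_{0,\alpha}r^{\alpha}\snr{z}^{q-p}\snr{z}^{p}\le c[a]_{0,\alpha}\big(2\|v\|_{L^{\infty}(B_r)}\big)^{q-p}r^{\alpha+p-q}\snr{z}^{p}\le c\,\snr{z}^{p},
$$
so that $H\big(x,(v-(v)_{B_r})/r\big)\le c\,\snr{(v-(v)_{B_r})/r}^{p}$ on $B_r$, with $c$ depending only on $p,q,[a]_{0,\alpha}$ and $\|v\|_{L^{\infty}(B_r)}$. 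A standard Sobolev--Poincar\'e inequality, with integrability exponent $\theta<p$ chosen so that its Sobolev conjugate exceeds $p$ (possible since $1<p<n$), combined with $\snr{Dv}^{p}\le H(x,Dv)$, then gives \rif{poinc} with some exponent $d=d_0(n,p)<1$ and a constant of the claimed form.

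In the second case, $a_0\le a(x)\le a_0+2^{\alpha}a_0/4\le 2a_0$ on $B_r$, so that $H(x,\cdot)$ is comparable, uniformly on $B_r$ with absolute constants, to the fixed $N$-function $H_0(z):=\snr{z}^{p}+a_0\snr{z}^{q}$, for which $t\mapsto H_0(t)/t^{p}$ is non-decreasing and $t\mapsto H_0(t)/t^{q}$ is non-increasing. I would then invoke the Sobolev--Poincar\'e inequality in Orlicz spaces for $N$-functions with such $(p,q)$-growth (as in \cite{CM1} and the references therein): there exists $d_1\equiv d_1(n,p,q)<1$ with
$$
\mint_{B_r}H_0\!\left(\frac{\snr{v-(v)_{B_r}}}{r}\right)dx\le c\left(\mint_{B_r}\big[H_0(\snr{Dv})\big]^{d_1}dx\right)^{1/d_1},\qquad c\equiv c(n,p,q),
$$
and transferring this back to $H(x,\cdot)$ gives \rif{poinc} here as well. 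Taking $d:=\max\{d_0,d_1\}<1$ and using that $s\mapsto\big(\mint_{B_r}f^{s}\big)^{1/s}$ is non-decreasing, both cases deliver \rif{poinc} with the same $d\equiv d(n,p,q)$.

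The two remaining assertions follow by repeating the argument verbatim, only replacing the Sobolev--Poincar\'e inequality for $v-(v)_{B_r}$ by its version for maps with $\texttt{tr}(v,\partial B_r)\equiv 0$, respectively for maps vanishing on a subset $A\subset B_r$ with $\snr{A}\ge\gamma\snr{B_r}$ (the latter introducing the extra dependence of $c$ on $\gamma$); the dichotomy itself is unaffected. The genuinely delicate point, and the main obstacle, is the Orlicz Sobolev--Poincar\'e inequality used in the second case: one needs a sub-unit exponent on the right-hand side and, crucially, a constant depending on $H_0$ only through the exponents $p,q$ and hence \emph{not} on $a_0$. Securing this uniformity requires the usual care --- essentially a normalization of $H_0$ reducing matters to a dimensionless inequality --- and is the one step I would lift directly from the existing literature.
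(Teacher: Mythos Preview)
Your proposal is correct and follows precisely the approach of \cite[Theorem 1.2]{CM2} that the paper itself invokes: the $p$-phase/$(p,q)$-phase dichotomy on $B_r$, the $L^\infty$-bound to absorb $a(x)|z|^q$ into $|z|^p$ in the first regime, and the Orlicz Sobolev--Poincar\'e inequality for the frozen $H_0$ in the second, with the uniformity in $a_0$ being the one genuinely delicate point you correctly flag. The paper gives no further detail beyond the reference to \cite{CM2}, so your write-up is in fact more explicit than the original.
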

\begin{proof} The proof is implicit in the one of \cite[Theorem 1.2]{CM2}, with minor modifications that are left to the reader. See also \cite{OK2}. 
\end{proof}
\begin{lemma}[Inner higher integrability]\label{C1} 
Let $u \in W^{1,H}(\Omega,\SN)$ be a constrained local minimizer of the functional $\mathcal F$ in \trif{genF} under (only) assumptions \trif{assF}$_1$ and \trif{mainbound}. There exists a positive integrability exponent $\delta_{g}\equiv \delta_{g}(\data)$, such that the following reverse inequality holds for every $B_{2R}\subset \Omega$ such that $R \leq 1$:
\eqn{maggiore-original}
$$
\left(\mint_{B_{R}}[H(x,Du)]^{1+\delta_{g}} \, dx\right)^{1/(1+\delta_{g})}\le c\mint_{B_{2R}}H(x, Du) \, dx\;,
$$
where $c\equiv c(\data)$.
\end{lemma}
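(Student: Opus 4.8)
\emph{Proof strategy.} The plan is to derive a reverse Hölder inequality with increasing supports for the nonnegative function $g:=H(\cdot,Du)$, which belongs to $L^1_{\loc}(\Omega)$ by Definition \ref{defimain}, and then to conclude via Gehring's lemma. Observe first that \trif{mainbound} in particular gives $q\leq p+\alpha$ and $q<N$, so that both Caccioppoli's inequality (Lemma \ref{L5}) and the intrinsic Sobolev–Poincar\'e inequality (Lemma \ref{poinclemma}) are at our disposal; the latter will be applied with $v\equiv u$, for which $\|u\|_{L^\infty}=1$ because of the sphere constraint $|u|=1$, so that all the constants produced below depend only on $\data$. It is also enough to prove the assertion for balls of small radius, the stated case $R\le 1$ then following by a routine finite covering argument; hence we may freely assume that all radii entering the estimates are $\leq 1$.

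First I would fix $B_{2R}\subset\Omega$ and take an arbitrary concentric pair $B_\varrho(y)\subset B_{2\varrho}(y)\subset B_{2R}(x_0)$. Caccioppoli's inequality \trif{caccine}, applied on the radii $\varrho$ and $2\varrho$, combined with the elementary bound $H(x,2t)\leq 2^q H(x,t)$ and with $|B_{2\varrho}|=2^n|B_\varrho|$, yields
\[
\mint_{B_\varrho(y)}H(x,Du)\,dx\leq c\,\mint_{B_{2\varrho}(y)}H\!\left(x,\frac{u-(u)_{B_{2\varrho}(y)}}{2\varrho}\right)dx\;.
\]
The right-hand side is in turn estimated, via the intrinsic Sobolev–Poincar\'e inequality \trif{poinc} applied on $B_{2\varrho}(y)$ (a ball of radius $\leq 1$) with $v\equiv u$, by $c\left(\mint_{B_{2\varrho}(y)}[H(x,Du)]^{d}\,dx\right)^{1/d}$ with $d\equiv d(n,p,q)<1$. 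Combining the two displays gives the reverse Hölder inequality
\[
\mint_{B_\varrho(y)}H(x,Du)\,dx\leq c\left(\mint_{B_{2\varrho}(y)}[H(x,Du)]^{d}\,dx\right)^{1/d}\qquad\text{for all}\ B_{2\varrho}(y)\subset B_{2R}(x_0)\;,
\]
with $c\equiv c(\data)$.

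Finally, I would invoke Gehring's lemma in its self-improving, quantitative form for reverse Hölder inequalities with increasing supports (as used, e.g., in \cite{CM1, CM2} and the references therein): there exist $\delta_g\equiv\delta_g(\data)>0$ and $c\equiv c(\data)$ such that $H(\cdot,Du)\in L^{1+\delta_g}_{\loc}(\Omega)$ and \trif{maggiore-original} holds. No genuine analytic obstacle is present at this stage: the whole point is that the intrinsic inequalities of Lemmas \ref{L5} and \ref{poinclemma} — which already encode both the delicate balance $q\leq p+\alpha$ between the two phases and the constraint $|u|=1$ (the latter via the projection argument of Lemma \ref{L4} and via $\|u\|_{L^\infty}=1$) — reduce the problem to the classical Gehring machinery applied to the ordinary $L^1$ function $g=H(\cdot,Du)$. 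The only care required is the bookkeeping on radii, which is why one first reduces to small balls, and the verification that the reverse Hölder constant, hence the exponent $\delta_g$, depends only on $\data$.
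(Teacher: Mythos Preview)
Your proposal is correct and follows essentially the same route as the paper: combine the Caccioppoli inequality \rif{caccine} with the intrinsic Sobolev--Poincar\'e inequality \rif{poinc} (using $\|u\|_{L^\infty}=1$) to obtain a reverse H\"older inequality for $H(\cdot,Du)$, and then invoke Gehring's lemma. The paper simply points to \cite[Theorem 1.2]{CM2} for this argument, whereas you spell out the steps explicitly.
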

\begin{proof} Also in this case, the proof follows the one for \cite[Theorem 1.2]{CM2}, which in turn only uses the assumed bound $q < p+\alpha$ and the validity of \rif{caccine}. \end{proof}
\begin{lemma}[Higher integrability up to the boundary]\label{C2} Let $u \in W^{1,H}(\Omega,\SN)$ be such that $H(\cdot,Du)\in L^{1+\delta}_{\rm{loc}}(\Omega)$, for some $\delta>0$, and, for $B_R \Subset \Omega$, $R \leq 1$, let $v \in W^{1,H}_{u}(B_{R},\SN)$ be a solution of 
$$
v\mapsto \min_{w\in W^{1,H}_{u}(B_{R},\SN)} \int_{B_{R}}F(x,w,Dw) \, dx\;,
$$
where the Carath\'eodory integrand $F(\cdot)$ satisfies (only) \trif{assF}$_1$ and \trif{mainbound}. Then there exists a positive exponent $\sigma_{g} \in (0, \delta)$ and a constant $c\geq 1$, both depending on \cc{$n,N,\nu,L,p,q,\alpha, [a]_{0,\alpha}$}, such that
%\eqn{maggiorev}
$$
\left(\mint_{B_{R}}[H(x,Dv)]^{1+\sigma_{g}} \, dx\right)^{1/(1+\sigma_{g})}\le c\left(\mint_{B_{R}}H(x,Du)]^{1+\sigma_{g}} \, dx\right)^{1/(1+\sigma_{g})}\;.
$$
Moreover, in the above display, $\sigma_g$ can be replaced by any smaller and positive number.  
\end{lemma}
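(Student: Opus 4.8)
The plan is to prove the statement by a Gehring-type higher-integrability argument up to the boundary of $B_R$, in which the role of $|Dv|^p$ in the classical scheme is played by $H(\cdot,Dv)$ and the Dirichlet datum produces a forcing term $H(\cdot,Du)$. It is convenient to work with the zero-extensions $g:=H(\cdot,Dv)\chi_{B_R}$ and $f:=H(\cdot,Du)\chi_{B_R}$, viewed as functions on $\ern$; one has $g\in L^1(\ern)$, while $f\in L^{1+\delta}(\ern)$ because $\overline{B}_R\Subset\Omega$. A preliminary observation is that, using $u$ itself as a competitor in the minimization defining $v$ (it belongs to $W^{1,H}_u(B_R,\SN)$) and then \eqref{assF}$_1$, one gets the global bound $\int_{B_R}H(x,Dv)\,dx\le (L/\nu)\int_{B_R}H(x,Du)\,dx$; this is what eventually turns a reverse H\"older inequality for $g$ alone into the asserted comparison between $H(\cdot,Dv)$ and $H(\cdot,Du)$.

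The heart of the matter is a boundary Caccioppoli inequality: for every $y\in\overline{B}_R$ and every sufficiently small $\rho$, and for $\rho/2\le t<s\le\rho$ with $B_s(y)\cap B_R$ Lipschitz (which holds for a.e.\ $s$), I would pick $\eta\in C^\infty_c(B_s(y))$ with $\chi_{B_t(y)}\le\eta\le\chi_{B_s(y)}$ and $|D\eta|\le 4/(s-t)$, and set $w:=v-\eta(v-u)$ on $B_R$. Since $v-u\in W^{1,1}_0(B_R)$ and $\eta$ is a bounded Lipschitz multiplier, $w\in W^{1,H}_u(B_R,\er^N)$, and on $\tilde\Omega:=B_s(y)\cap B_R$ the trace of $w$ lies in $\SN$: near $\partial B_s(y)$ one has $\eta\equiv0$, hence $w=v$; along $\partial B_R$ one has $w=(1-\eta)\,\tr(v,\partial B_R)+\eta\,\tr(u,\partial B_R)=\tr(u,\partial B_R)$. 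Lemma~\ref{L4} (this is where $q<N$ enters, cf.\ Remark~\ref{onlyre}) then furnishes $\tilde w\in W^{1,H}(\tilde\Omega,\SN)$ with $\int_{\tilde\Omega}H(x,D\tilde w)\le c\int_{\tilde\Omega}H(x,Dw)$ and $w-\tilde w\in W^{1,1}_0(\tilde\Omega)$; gluing $\tilde w$ to $v$ outside $\tilde\Omega$ yields a competitor $\overline v\in W^{1,H}_u(B_R,\SN)$ with $v\equiv\overline v$ off $\tilde\Omega$. Minimality of $v$ and \eqref{assF}$_1$ give $\nu\int_{\tilde\Omega}H(x,Dv)\le L\int_{\tilde\Omega}H(x,D\tilde w)\le c\int_{\tilde\Omega}H(x,Dw)$; expanding $Dw=(1-\eta)Dv-D\eta\otimes(v-u)+\eta Du$ and using $H(x,\lambda z)\le\lambda^pH(x,z)$ for $\lambda\in[0,1]$ together with $H(x,z_1+z_2+z_3)\le c(p,q)\sum_i H(x,z_i)$, one obtains, after the standard hole-filling, an inequality of the form $\phi(t)\le\theta\phi(s)+c(s-t)^{-p}\int_{B_\rho(y)\cap B_R}|v-u|^p\,dx+c(s-t)^{-q}\int_{B_\rho(y)\cap B_R}a(x)|v-u|^q\,dx+c\int_{B_\rho(y)\cap B_R}H(x,Du)\,dx$, where $\phi(t):=\int_{B_t(y)\cap B_R}H(x,Dv)\,dx$ and $\theta=\theta(n,N,\nu,L,p,q)\in(0,1)$. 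The standard iteration lemma, in the form that accommodates two distinct negative powers of $s-t$, absorbs the first term and --- recalling $\rho\le1$ --- gives the boundary Caccioppoli inequality
\[
\int_{B_{\rho/2}(y)\cap B_R}H(x,Dv)\,dx\le c\int_{B_\rho(y)\cap B_R}H\!\left(x,\tfrac{v-u}{\rho}\right)dx+c\int_{B_\rho(y)\cap B_R}H(x,Du)\,dx .
\]

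Next I would convert this into a reverse H\"older inequality on all balls. If $B_{2\rho}(y)\subset B_R$, then \eqref{caccine} of Lemma~\ref{L5} followed by the intrinsic Sobolev--Poincar\'e inequality \eqref{poinc} of Lemma~\ref{poinclemma} (with $\|v\|_{L^\infty}=1$), exactly as in the proof of Lemma~\ref{C1}, gives $\mint_{B_{\rho/2}(y)}g\le c\big(\mint_{B_\rho(y)}g^{d}\big)^{1/d}$. If instead $y\in\partial B_R$, I extend $v-u$ by zero (it is in $W^{1,1}_0(B_R)$ and $|v-u|\le2$), note that the extension vanishes on $B_\rho(y)\setminus B_R$, whose measure is at least a dimensional fraction of $|B_\rho(y)|$, and use the last variant of Lemma~\ref{poinclemma} to bound $\mint_{B_\rho(y)}H(x,(v-u)/\rho)$ by $c\big(\mint_{B_\rho(y)}[H(x,D(v-u))\chi_{B_R}]^{d}\big)^{1/d}\le c\big(\mint_{B_\rho(y)}(g+f)^{d}\big)^{1/d}$; together with the boundary Caccioppoli inequality this yields $\mint_{B_{\rho/2}(y)}g\le c\big(\mint_{B_\rho(y)}g^{d}\big)^{1/d}+c\big(\mint_{B_\rho(y)}f^{d}\big)^{1/d}$. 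Since every ball contained in $B_R$ either is of the first type or is contained in a ball of comparable radius centered at a point of $\partial B_R$ (which has a fixed fraction of its volume outside $B_R$), these two estimates provide the full Calder\'on--Zygmund covering structure, and the classical Gehring--Giaquinta--Modica lemma, in its version for balls centered at a boundary point and with an $L^{1+\delta}$ right-hand datum, produces $\sigma_g\in(0,\delta)$ and $c\ge1$, depending only on $n,N,\nu,L,p,q,\alpha,[a]_{0,\alpha}$, such that for every $\sigma\in(0,\sigma_g]$ and every ball $\widehat B$ with $B_R\subset\tfrac12\widehat B$ one has $\big(\mint_{\frac12\widehat B}g^{1+\sigma}\big)^{1/(1+\sigma)}\le c\,\mint_{\widehat B}g+c\big(\mint_{\widehat B}f^{1+\sigma}\big)^{1/(1+\sigma)}$. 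Choosing $\widehat B$ concentric to $B_R$ of radius $5R$, using that $g,f$ are supported in $B_R$, and finally inserting $\mint_{B_R}g\le (L/\nu)\mint_{B_R}f\le(L/\nu)\big(\mint_{B_R}f^{1+\sigma}\big)^{1/(1+\sigma)}$ from the first paragraph, one arrives at $\big(\mint_{B_R}[H(x,Dv)]^{1+\sigma}\big)^{1/(1+\sigma)}\le c\big(\mint_{B_R}[H(x,Du)]^{1+\sigma}\big)^{1/(1+\sigma)}$ for all $0<\sigma\le\sigma_g$, which is the claim together with the remark that $\sigma_g$ may be replaced by any smaller positive number.

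The main obstacle is the boundary Caccioppoli inequality: one has to carry out the comparison-map surgery carefully --- the map $\tilde w$ produced by the projection Lemma~\ref{L4} lives on the curved Lipschitz piece $B_s(y)\cap B_R$ and must be reattached to $v$ across $\partial B_R$ while keeping both the constraint $|\cdot|=1$ and the Dirichlet datum --- and, more characteristically of the double-phase setting, one must run the hole-filling iteration with a genuinely $(p,q)$-inhomogeneous forcing: only the refined iteration lemma keeping the powers $(s-t)^{-p}$ and $(s-t)^{-q}$ separate returns the sharp term $H(x,(v-u)/\rho)$, which is in turn the exact form needed for the intrinsic Sobolev--Poincar\'e inequality \eqref{poinc} to close the scheme. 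The remaining ingredients --- the interior reverse H\"older inequality and the boundary version of Gehring's lemma --- are routine once Lemmas~\ref{L4}--\ref{poinclemma} are at hand.
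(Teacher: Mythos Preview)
Your proposal is correct and follows essentially the same route as the paper: the paper derives the same boundary Caccioppoli inequality via the comparison map $v-\eta(v-u)$ and Lemma~\ref{L4}, then hole-fills and iterates to reach $\int_{B_{r/2}(x_0)\cap B_R}H(x,Dv)\,dx\le c\int_{B_r(x_0)\cap B_R}H(x,(v-u)/r)\,dx+c\int_{B_r(x_0)\cap B_R}H(x,Du)\,dx$, after which it refers to \cite[Lemma~5]{cristiana3} and \cite[Theorem~1.2]{CM2} for the intrinsic Sobolev--Poincar\'e step and the Gehring argument, i.e., exactly what you spelled out. Your write-up is simply more detailed than the paper's sketch.
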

\begin{proof} 
With $x_0\in B_R$, let us fix a ball $B_{r}(x_{0})\subset \mathbb{R}^{n}$ such that it is $\snr{B_{r}(x_{0})\setminus B_{R}}>\snr{B_{r}(x_{0})}/10$. Let us fix $r/2<t<s<r$ and take a cut-off function 
$
\eta \in C^{1}_{c}(B_{s}(x_{0}))$ such that 
$\chi_{B_{t}(x_{0})}\le \eta \le \chi_{B_{s}(x_{0})}$ and $\snr{D\eta}\le 4/(s-t)$.
The function $v-\eta(v-u)$ coincides with $v$ in $\partial B_R$ (in the sense of traces) and therefore we can apply Lemma \ref{L4}. This provides us with a map $w\in W^{1,H}_{v}(B_{s}(x_{0})\cap B_{R},\SN)$ such that 
$$
\int_{B_{s}(x_{0})\cap B_{R}}H(x,Dw) \, dx\leq \cc{c(n,N,\nu,L,p,q)}  \int_{B_{s}(x_{0})\cap B_{R}}H(x,D(v-\eta(v-u))) \, dx\;.
$$
\cc{The minimality of $v$ and \rif{assF}$_1$, together with the above inequality, yield}
\begin{align*}
& \int_{B_{s}(x_{0})\cap B_{R}}H(x,Dv) \, dx\le \frac{L}{\nu} \int_{B_{s}(x_{0})\cap B_{R}}H(x,Dw) \, dx\\
& \quad \leq c \int_{B_{s}(x_{0})\setminus B_t(x_0)\cap B_{R}}H(x,Dv) \, dx + 
c \int_{B_{s}(x_{0})\cap B_{R}}H(x,Du) \, dx+ c \int_{B_{s}(x_{0})\cap B_{R}} H\left(x,\frac{v-u}{s-t}\right) \, dx\;,
\end{align*}
\cc{with $c=c(n,N,\nu,L,p,q)$}. By filling the hole and iterating as for instance done in \cite[Proof of Theorem 1.8]{CM1}, we arrive at 
$$
 \int_{B_{r/2}(x_{0})\cap B_{R}}H(x,Dv) \, dx\leq c \int_{B_{r}(x_{0})\cap B_{R}} H\left(x,\frac{v-u}{r}\right) \, dx
 + c\int_{B_{r}(x_{0})\cap B_{R}}H(x,Du) \, dx\;,
$$
for \cc{$c\equiv c (n,N,\nu,L,p,q)$}. 
From this point on, we can follow the proof of \cite[Lemma 5]{cristiana3} but using the method of \cite[Theorem 1.2]{CM2}\cd{, see also \cite[Lemma 10]{me}.} 
\end{proof} 
\begin{remark}\label{reun} 
\emph{The assertion of Lemma \ref{C1} continues to hold in the case of unconstrained local minimizers $u \in W^{1,H}(\Omega,\erN)$, \ccc{$N>1$}, such that $u \in L^{\infty}(\Omega)$; in this case $c$ and $\delta_g$ also depend on $\|u\|_{L^{\infty}}$; for this see the original proof in \cite{CM2} and the extensions made in \cite{OK2}. Moreover, Lemma \ref{C2} still holds when $u,v \in L^{\infty}(\Omega, \er^N)$ and, also in this case, 
$c$ and $\sigma_g$ again depend on $\|u\|_{L^{\infty}(B_R)}$ and $\|v\|_{L^{\infty}(B_R)}$. The proof follows again the one proposed in \cite[Lemma 5]{cristiana3} and \cite[Theorem 1.2]{CM2}. For later convenience we discuss a case when assumptions in \rif{assF} are relaxed. Instead of \rif{assF}$_1$, we consider 
\eqn{assFalt}
$$
\tilde \nu(M) H(x,z) \leq F(x,v,z)\le \tilde L(M)H(x,z)
$$
to be satisfied as in \rif{assF}$_1$, whenever $|v|\leq M$, where $0 < \nu (M) \leq 1 \leq L(M)$ are, respectively, non-increasing and non-decreasing functions of \cc{$M\geq 3N$}. Both Lemma \ref{C1} and of Lemma \ref{C2} hold assuming \rif{assFalt} instead of \rif{assF}$_1$, with exponents $\delta_g, \sigma_g$ depending again on $\|u\|_{L^{\infty}(B_R)}$ and $\|v\|_{L^{\infty}(B_R)}$. This can be easily seen (for instance in the proof of Lemma \ref{C1}) by observing that $\|\tilde w\|_{L^{\infty}(B_R)}\leq 3\|u\|_{L^{\infty}(B_R)}$ and therefore \rif{assFalt} can be used with $M$ depending only on $\|u\|_{L^{\infty}(B_R)}$ in \rif{bigrowth}. In the same way, the content of Lemma \ref{L5} still holds under assumptions \rif{assFalt}. 
} 
\end{remark}

\begin{remark}\label{reun2}
\emph{ The content of Lemma \ref{C2} applies in particular to the case when the function $a(x)\equiv a_0\geq 0$ is  constant and $H(x, z)\equiv H_0(z):= |z|^p+a_0|z|^q$. In this case assumption \cc{$\rif{mainbound}_{1}$} is not necessary and the statement continues to hold whenever $p\leq q$ are arbitrary.  }
\end{remark}
\subsection{On the Euler-Lagrange equation under non-standard growth conditions}\label{eldp}

Let us consider a ball $B_{r}\Subset \Omega$ and $v \in W^{1,H}(B_{r},\SN)$ being a solution of the frozen Dirichlet problem
\begin{flalign}\label{elfrz}
v\mapsto \min_{w \in W^{1,H}_{u}(B_{r},\SN)}\int_{B_{r}}g(x,Dw) \, dx\;,
\end{flalign}
where, \cc{with $\bar{u} \in \er^N$ being fixed}, we have set
\eqn{lagg}
$$\cc{g(x,z):=F(x,\bar{u},z)=\tilde{F}(x,\bar{u},\snr{z})\;,}$$ for $x\in \Omega$ and $z \in \er^{N\times n}$ and of course $F(\cdot)$ is the Carath\'eodory integrand considered in \cd{\rif{genF}}; this time we assume that $F(\cdot)$ satisfies only \rif{assF}$_{1,2}$. By definition, $g(\cdot)$ matches \cd{$\eqref{assF}_{1}$-$\eqref{assF}_{2}$}. Because of the non-standard growth conditions considered here, we cannot derive the Euler-Lagrange equation for \eqref{elfrz} in the usual way, adopting variations defined through smooth $\varphi$ and then concluding via a density argument. We shall rather use a direct argument,  eventually leading to establish that
\begin{eqnarray}
&&\cc{\notag \int_{B_{r}}\tilde{F}'(x,\bar{ u},\snr{Dv})\frac{Dv}{\snr{Dv}}\cdot D\varphi-\tilde{F}'(x,\bar{u},\snr{Dv})\snr{Dv}(v\cdot\varphi) \, dx}\\
&&\qquad \cc{=\int_{B_{r}}\partial F(x,\bar{u},\snr{Dv})\cdot D\varphi-\tilde{F}'(x,\bar{u},\snr{Dv})\snr{Dv}(v\cdot\varphi) \, dx=0}
\label{EL1} 
\end{eqnarray}
holds whenever $\varphi\in (W^{1,H}_{0}\cap L^{\infty})(B_{r},\RN)$; needless to say, the symbol $\tilde{F}'$ denotes the derivative of $\tilde{F}$ with respect to the last variable. For the sake of completeness we report all the details. To proceed, for $s\in (0,1)$, define the variation $v_{s}:=\Pi(v+s\varphi)$, where $\Pi(y)=y/\snr{y}$ for $y \in \RN\setminus \{0\}$. \cd{Clearly, for $s$ sufficiently small, $v_{s}\in W^{1,H}_{u}(B_{r},\mathbb{S}^{N-1})$}. The minimality of $v$ and the very definition of $v_{s}$ tell us that
\begin{flalign}\label{el1}
0\le &\int_{B_{r}}\frac{g(x,Dv_{s})-g(x,Dv)}{s} \, dx=\frac 1s\int_{B_{r}}\left(\int_{0}^{1}\partial g(x,\lambda Dv_{s}+(1-\lambda)Dv) \ d\lambda \right)\cdot(Dv_{s}-Dv) \, dx\;.
\end{flalign}
We aim to pass to the limit with $s\to 0$ in \eqref{el1} via the dominated convergence theorem. A direct computation and \cd{the fact that} $\nabla \Pi (v)Dv=Dv$ show that
\begin{flalign}\label{el2}
\snr{Dv_{s}-Dv}&=\snr{\nabla \Pi(v+s\varphi)(Dv+sD\varphi)-\nabla \Pi(v)Dv}\nonumber \\
&\leq \snr{\nabla \Pi(v+s\varphi)-\nabla \Pi(v)}|Dv| + \snr{s\nabla \Pi(v+s\varphi)D\varphi}\le cs\left(\snr{\varphi}\snr{Dv}+\snr{D\varphi}\right)\;,
\end{flalign}
where $c\equiv c(\nr{\nabla\Pi}_{\infty},\nr{\nabla^{2}\Pi}_{\infty})\equiv c(N)$. Plugging \eqref{el2} into the last term in the right-hand side of \eqref{el1} we obtain
\begin{eqnarray*}
\frac 1s\left | \ \left(\int_{0}^{1}\partial g(x,\lambda Dv_{s}+(1-\lambda)Dv) \ d\lambda \right)\cdot(Dv_{s}-Dv)\  \right |&\le & c(\snr{\varphi}\snr{Dv}+\snr{D\varphi})\int_{0}^{1}\snr{\partial g(x,\lambda Dv_{s}+(1-\lambda)Dv)} \ d\lambda\\
&\le & c[\snr{\mathrm{(I)}}+\snr{\mathrm{(II)}}]\;,
\end{eqnarray*}
with $c\equiv c(N)$. From $\eqref{assF}_{2}$, Young's inequality and \eqref{el2}, we estimate
\cd{
\begin{eqnarray*}
\snr{\mathrm{(I)}}%\le& c\nr{\varphi}_{L^{\infty}(B_{r})}\int_{0}^{1}\snr{\partial g(x,\lambda Dv_{s}+(1-\lambda)Dv)}\snr{Dv}d\lambda\nonumber \\
&\le& c\nr{\varphi}_{L^{\infty}(B_{r})}\int_{0}^{1}\frac{H(x,\lambda Dv_{s}+(1-\lambda)Dv)}{\snr{\lambda Dv_{s}+(1-\lambda)Dv}}\snr{Dv} \ d\lambda\nonumber \\
&\le & c\nr{\varphi}_{L^{\infty}(B_{r})}\int_{0}^{1}[H(x,\lambda Dv_{s})+H(x,(1-\lambda)Dv)+H(x,Dv)]\ d\lambda\le c[H(x,D\varphi)+H(x,Dv)]\;,
\end{eqnarray*}
}
where $c\equiv c(N,L,p,q,\nr{\varphi}_{L^{\infty}(B_{r})})$. In a similar way we also have
\cd{
\begin{eqnarray*}
\snr{\mathrm{(II)}}%\le & c\int_{0}^{1}\snr{\partial g(x,\lambda Dv_{s}+(1-\lambda)Dv)}\snr{D\varphi}d\lambda\nonumber \\
&\le &c\int_{0}^{1}\frac{H(x,\lambda Dv_{s}+(1-\lambda)Dv)}{\snr{\lambda Dv_{s}+(1-\lambda)Dv}}\snr{D\varphi} \ d\lambda\nonumber \\
&\le &c\int_{0}^{1}[H(x,\lambda Dv_{s})+H(x,(1-\lambda)Dv)+H(x,D\varphi)] \ d\lambda\le c[H(x,D\varphi)+H(x,Dv)]\;,
\end{eqnarray*}
}
with \cd{$c\equiv c(N,L,p,q)$}. Merging the content of the last three displays yields
\eqn{el7}
$$
\frac 1s\left | \ \left(\int_{0}^{1}\partial g(x,\lambda Dv_{s}+(1-\lambda)Dv) \ d\lambda \right)\cdot (Dv_{s}-Dv)\  \right |\le  c[H(x,Dv)+H(x,D\varphi)]\;,
$$
again for $c\equiv c(N,L,p,q,\nr{\varphi}_{L^{\infty}(B_{r})})$. 
Finally, by the regularity of $z\mapsto g(\cdot,z)$, \eqref{el2} and $\eqref{assF}_{2}$ we notice that
\begin{flalign*}
\begin{cases}
\displaystyle  \snr{\partial g(x,\lambda Dv_{s}+(1-\lambda)Dv)} \ d\lambda\le c\left(\frac{H(x,Dv)}{\snr{Dv}}+\frac{H(x,D\varphi)}{\snr{D\varphi}}\right) \\  \\ 
\displaystyle \lim_{s\to 0}\partial g(x,\lambda Dv_{s}+(1-\lambda)Dv)=\partial g(x,Dv)\;,
\end{cases}
\end{flalign*}
thus, by the dominated convergence theorem,
\begin{flalign}\label{el5}
\lim_{s\to 0}\int_{0}^{1}\partial g(x,\lambda Dv_{s}+(1-\lambda)Dv) \ d\lambda=\partial g(x,Dv)\;.
\end{flalign}
\cc{Using} \eqref{el2} and \eqref{el5} we can compute
\begin{flalign}\label{el6}
\lim_{s\to 0}\frac 1s&\left(\int_{0}^{1}\partial g(x,\lambda Dv_{s}+(1-\lambda)Dv) \ d\lambda\right)\cdot(Dv_{s}-Dv)=\partial g(x,Dv)\cdot\nabla \Pi(v)D\varphi\nonumber \\
&+\lim_{s\to 0}\left(\int_{0}^{1}\partial g(x,\lambda Dv_{s}+(1-\lambda)Dv) \ d\lambda\right)\cdot \frac{\nabla \Pi(v+s\varphi)-\nabla \Pi(v)}{s}Dv\nonumber \\
&=\partial g(x,Dv)\cdot\left(\nabla \Pi(v)D\varphi+\nabla^{2}\Pi(v)\varphi Dv \right)\;.
\end{flalign}
Now, \eqref{el1}, \eqref{el7}, \eqref{el6} and the dominated convergence theorem render
\begin{flalign*}
0\le \int_{B_{r}}\partial g(x,Dv)\cdot \left(\nabla \Pi(v)D\varphi+\nabla^{2}\Pi(v)\varphi Dv\right) \, dx\;.
\end{flalign*}
The same argument with $s \in (-1,0)$ finally yields 
\begin{flalign*}
\int_{B_{r}}\partial g(x,Dv)\cdot\left(\nabla \Pi(v)D\varphi+\nabla^{2}\Pi(v)\varphi Dv\right) \, dx=0\;. 
\end{flalign*}
Taking into account the symmetry of the Jacobian of the projector, we can conclude that
\cc{\begin{flalign*}
0&=\int_{B_{r}}\tilde{F}'(x,\bar{ u},\snr{Dv})\frac{Dv}{\snr{Dv}}\cdot \left(\nabla \Pi(v)D\varphi+\nabla^{2}\Pi(v)\varphi Dv\right) \, dx \nonumber \\
&=\int_{B_{r}}\tilde{F}'(x,\bar{ u},\snr{Dv})\frac{Dv}{\snr{Dv}}\cdot D\varphi-\frac{\tilde{F}'(x,\bar{ u},\snr{Dv})}{\snr{Dv}}A_{v}(Dv,Dv)\varphi \, dx\nonumber \\
&=\int_{B_{r}}\tilde{F}'(x,\bar{ u},\snr{Dv})\frac{Dv}{\snr{Dv}}\cdot D\varphi-\tilde{F}'(x,\bar{u},\snr{Dv})\snr{Dv}(v\cdot\varphi )\, dx\;,
\end{flalign*}}
where in the last \ccc{line} we have used the explicit expression of the second fundamental form $A_{v}(\cdot, \cdot)$ of $\SN$; see also \cite[Section 2.2]{Simon}. We have therefore proved the validity of \rif{EL1}. 
\subsection{A Morrey type decay estimate}\label{morreydecsec}
In this section we briefly revisit some scalar regularity results reported in \cite{BCM3, CM2}, adapting them to the vectorial case. We consider unconstrained local minimizers of functionals of the type
\eqn{funzionale-costretto}
$$
w\mapsto \int_{B_{r}}g(x,Dw) \, dx
$$ 
under the structure condition \rif{lagg}. We then have the following:
\begin{theorem}[\cite{BCM3, CM2}]\label{morreydecayth} Let $h \in (W^{1,H}\cap L^{\infty})(\Omega, \erN)$ be a local minimizer of the functional in \trif{funzionale-costretto} under assumptions \cc{\trif{assF}$_{1,2,3,4}$, \eqref{assF2}$_{1}$,\trif{lagg} and $\eqref{mainbound}_{1}$}. Then for every $\sigma \in (0,n]$ there exists a constant \ccc{$c \equiv c (\texttt{data}, \|u\|_{L^\infty(\Omega)}, \sigma)$} such that \eqn{morreydecayH}
$$\int_{B_{t}}H(x,Dh) \, dx \le c\left(\frac ts\right)^{n-\sigma}\int_{B_{s}}H(x,Dh) \, dx$$ holds whenever 
\cd{$B_t\subset B_s \subset \Omega$} are concentric balls such that $t \leq 1$. 
\end{theorem}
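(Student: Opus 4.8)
The plan is to reduce the statement to an intrinsic a priori gradient bound and then obtain \rif{morreydecayH} by a trivial computation. The integrand $g(x,z)=\tilde F(x,\bar u,\snr{z})$ from \rif{lagg} depends on the gradient variable only through its modulus, so it has Uhlenbeck structure; consequently the scalar double phase gradient regularity theory of \cite{CM1,CM2,BCM3} transfers to the vectorial bounded local minimizer $h$, the scalar competitors being replaced by vector valued ones and the De~Giorgi truncations by their systems analogue. Concretely, I would re-derive in this vectorial setting the intrinsic estimate of \cite{BCM3},
\[
\sup_{B_{s/2}}\,H(x,Dh)\;\le\;\frac{c}{s^{\,n}}\int_{B_{s}}H(x,Dh)\,dx\,,\qquad B_{s}\Subset\Omega,\ \ s\le 1\,,
\]
whose structural inputs are the higher integrability already isolated in Lemma \ref{C1} and Remark \ref{reun} (so that $H(\cdot,Dh)\in L^{1+\delta_g}_{\loc}$ and the reverse Hölder inequality \rif{maggiore-original} hold) together with the Euler--Lagrange equation for minimizers of \rif{funzionale-costretto}, obtained as in Section \ref{eldp}. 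Granting the displayed bound, \rif{morreydecayH} follows at once: for $t\le s/2$ one estimates $\int_{B_t}H(x,Dh)\,dx\le \snr{B_t}\sup_{B_{s/2}}H(x,Dh)\le c(t/s)^{n}\int_{B_s}H(x,Dh)\,dx\le c(t/s)^{n-\sigma}\int_{B_s}H(x,Dh)\,dx$ because $t/s\le1$ and $\sigma\ge0$, while for $s/2<t\le s$ the inequality is clear from the monotonicity of $\varrho\mapsto\int_{B_\varrho}H(x,Dh)\,dx$ after enlarging the constant.

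The a priori bound itself I would establish, following \cite{CM1,CM2,BCM3}, by a De~Giorgi type iteration fed by excess decay estimates produced through freezing and comparison on sub-balls $\brho\Subset B_{s}$, distinguishing two regimes according to the size of $\inf_{\brho}a$ relative to $\varrho^{\alpha}$. On \emph{$p$-phase} balls, where $\inf_{\brho}a\le 4[a]_{0,\alpha}\varrho^{\alpha}$, Caccioppoli's inequality \rif{caccine2} makes the energy comparable to the rescaled $p$-Dirichlet one; comparing $h$ on $\brho$ with the minimizer $h_{0}$ of the frozen functional $w\mapsto\int_{\brho}(\snr{Dw}^{p}+a(x_{0})\snr{Dw}^{q})\,dx$ and invoking the classical $C^{1,\gamma}$ decay for $p$-Laplacean systems with Uhlenbeck structure \cite{Uh} gives decay of exponent $n$ for $\snr{Dh}^{p}$, hence for $H(\cdot,Dh)$, after absorbing $a\snr{Dh}^{q}\lesssim\varrho^{\alpha}\snr{Dh}^{q}$ into $\snr{Dh}^{p}$ — admissible precisely because $q<p+\alpha$ and because of the sup bound being bootstrapped. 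On the complementary balls, where $\inf_{\brho}a>4[a]_{0,\alpha}\varrho^{\alpha}$, the $\alpha$-Hölder continuity of $a(\cdot)$ forces $a(x)\approx a(x_{0})$ on $\brho$ up to universal constants, so $g(\cdot)$ is comparable to the autonomous integrand $\snr{z}^{p}+a(x_{0})\snr{z}^{q}$, whose minimizers enjoy $C^{1,\gamma}$ estimates of Marcellini--Lieberman type \cite{M1,M2}; these again transfer to systems thanks to the Uhlenbeck structure \eqref{assF2}$_{1}$. In both regimes the mismatch between $g(x,\cdot)$ and its frozen version is controlled by $\omega(\varrho)$ from \rif{assF}$_{4}$ and by $[a]_{0,\alpha}\varrho^{\,\alpha-(q-p)}$, quantities summable along dyadic scales exactly because $q<p+\alpha$; iterating the resulting excess--decay inequality yields the sup bound.

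The genuine obstacle is this second step rather than the final iteration. Since the ellipticity ratio of $g$ behaves like $1+\nr{a}_{L^{\infty}(\brho)}\snr{z}^{q-p}$ and is thus unbounded, the freezing of the $x$-dependence has to be carried out in the $p$-intrinsic geometry of \cite{BCM3}, in which the balls are dilated according to the size of the gradient; reconciling this intrinsic rescaling with the two-phase alternative and with the precise form of the error terms is where the sharp relation $q<p+\alpha$ is genuinely used, and it is the only place where \eqref{mainbound}$_{1}$ enters. A secondary, bookkeeping-type point is to check that each reference estimate used above ($C^{1,\gamma}$ and sup bounds for the $p$-Laplacean and for frozen double phase problems) is available for \emph{systems} and not merely for equations: this is guaranteed by the Uhlenbeck structure of $\tilde F$ but must be tracked through every comparison. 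Once the intrinsic sup estimate is secured, the passage to \rif{morreydecayH} for an arbitrary $\sigma\in(0,n]$ is the routine computation spelled out in the first paragraph.
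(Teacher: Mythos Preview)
Your approach is correct and takes essentially the same route as the paper: both lean on the freezing/comparison machinery of \cite{BCM3,CM2} transferred to systems via the Uhlenbeck structure \eqref{assF2}$_1$, with the a priori Lipschitz bound for the \emph{frozen} comparison problem (what the paper isolates in Remark~\ref{discuss-re}) serving as the reference estimate and the strict inequality $q<p+\alpha$ absorbing the freezing errors without appealing to any preliminary H\"older continuity. The only difference is organizational---you route through the full intrinsic sup bound $\sup_{B_{s/2}}H(\cdot,Dh)\le c\,\mint_{B_s}H(x,Dh)\,dx$ for the $x$-dependent functional and then read off the Morrey decay, whereas the paper extracts the Morrey decay directly as an intermediate step of the \cite{BCM3,CM2} argument en route to Lipschitz; the ingredients are identical.
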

\begin{proof} The proof can be obtained tracking the ones given for \cite[Proposition 3.4]{CM2} and \cite[Theorem 2]{BCM3}, according to the remarks made in the proof of \cite[Theorem 5.2]{CM2}, that describes the modifications to make with respect to the scalar case. Reference \cite{BCM3} is actually more suitable as the regularity results are proved for general functionals, without assuming the splitting structure considered in \cite{CM2}. The remarks given in \cite[Section 5,2]{CM2} will be also useful here. As an outcome of the proofs of \cite[Proposition 3.4]{CM2} and \cite[Theorem 2]{BCM3}, estimate \rif{morreydecayH} follows provided 
\cd{$B_t\subset B_s \subset \Omega_0 \Subset \Omega$} are concentric balls and with an additional dependence of the constant $c$ on $\dist (\Omega_0, \Omega)$, but under the full bound $q \leq p+\alpha$. As remarked in \cite{BCM3, CM2}, it is possible to reach the borderline case $q\leq p+\alpha$ in the scalar case by using the preliminary local H\"older continuity of $h$ for some exponent $\gamma \in (0,1)$ (see \cd{\cite[Proposition 3.1]{CM2}}); the same happens in \cd{\cite[Theorem 6]{BCM3}}. This comes along with an a propri estimate of the type $[h]_{0, \gamma;\Omega_0}< c$, where, amongst the other things, $c$ depends also on $\dist (\Omega_0, \Omega)$. This is exactly the point where the dependence on $\dist (\Omega_0, \Omega)$ comes from in the final statement of \rif{morreydecayH} from \cite[Proposition3.4]{CM2} and \cite[Theorem 2]{BCM3}. On the other hand, as already remarked in the proof of \cite[Theorem 5.2]{CM2}, when considering the bound $q<p+\alpha$ we can avoid using that $u \in C^{0, \gamma}_{\loc}(\Omega)$ and in this way, taking into account the proofs in \cite{BCM3}, we arrive at \rif{morreydecayH} with the dependence of the constant $c$ as described in the statement of Theorem \ref{morreydecayth}. Notice that, in order to prove \rif{morreydecayH}, in \cite{BCM3, CM2} it is also necessary to replace the a priori Lipschitz estimate for minima of frozen functionals in \cite[(132)]{BCM3} with an analogous one for the vectorial case. This is discussed in Remark \ref{discuss-re} \cc{below}. Notice that here we are \cc{not assuming that} the function $\tilde F(\cdot)$ satisfies \rif{assF2}$_2$. \end{proof}
\begin{remark}\label{discuss-re} \emph{
Let us consider a local minimizer $v \in W^{1, H_0}(B_r, \er^{N\times n})$ of the functional 
\eqn{funzionale-costretto-zero}
$$
w\mapsto \int_{B_{r}}g(x_0,Dw) \, dx \qquad x_0\in \Omega\;,
$$ 
where $H_0(z)\equiv |z|^p + a(x_0)|z|^q$. The 
following estimate holds:
\eqn{suppistima}
$$
\sup_{B_{r/2}}\, H_0(Dw) \leq c \mint_{B_r} H_0(Dw)\, dx \;,
$$
where $\cd{c\equiv c (n,N,\nu, L,p,q)}$. This estimate plays a crucial role in the proofs given in \cite{BCM3, CM2}, and these are concerned with the scalar case. To get that this result holds in our vectorial case too it is sufficient to prove that
\eqn{equivalenti}
$$
\cc{\tilde{F}''(x,\bar{u},t) t \approx \tilde{F}'(x,\bar{u},t) \qquad \mbox{for every}\ t>0}
$$  
(for implied constants depending only on \cd{$n,N,\nu, L,p,q$}) and then appeal for instance to \cite[Lemma 5.8]{DSV2}. Indeed, \cd{following \cite[Lemma 3.4]{meom}, we see that} 
\eqn{iduetermini}
$$
\cc{ \partial^2 g(x_0, z) 
  = \tilde{F}''(x_0,\bar{u},\snr{z}) \frac{z \otimes z}{|z|^2} + \tilde{F}'(x_0,\bar{u},\snr{z})\left[ \frac{\mathbb{I}_{N\times n}}{|z|} - \frac{z \otimes z}{|z|^3} \right] \;, }
$$ 
holds for every $z \in \er^{N\times n}$ such that $|z|\not=0$; here it is $\mathbb{I}_{N\times n} = \delta_{ij}\delta_{\alpha \beta}$. Testing the above inequality for $\xi \bot z$ and for \cc{$\xi = z$} and using \eqref{assF}$_{2,3}$ yields 
\eqn{leduezero}
$$ \cc{\frac{\nu H(x_0,t)}{t} \leq \tilde{F}'(x_0,\bar{ u},t)\leq \frac{LH(x_0,t)}{t}  \quad \mbox{and} \quad \frac{\nu H(x_0,t)}{t^2}  \leq \tilde{F}''(x_0,\bar{u},t)\leq  \frac{LH(x_0,t)}{t^2} \;,}$$ respectively, for every $t>0$, so that \rif{equivalenti} follows. Notice that here we are only assuming that $\tilde F(\cdot)$ satisfies only  \rif{assF2}$_1$.}
\end{remark}
\begin{remark}\label{equiequi} 
\emph{
This is a side remark of later use. Assuming that the function $\tilde F(\cdot)$ satisfies \rif{assF2}$_{1,2}$, \cd{as in \cite[Lemma 3.4]{meom}}, by using \rif{leduezero} we get that \rif{assF2}$_2$ can be reformulated as
\eqn{newform}
$$\left|\tilde F''(x,v,t+s)-\tilde F''(x,v,t)\right|\leq \cd{c (n,N,\nu, L,p,q)}\tilde F''(x,v,t)\left(\frac{|s|}{t}\right)^{\beta_1}\;.$$
}
\end{remark}

\section{Harmonic type approximation}\label{harmonicsec}
In this section we revisit the arguments of \cd{\cite{BCM3, DSV}}, to give two kinds of harmonic type approximation lemmas. The most peculiar one is the first, which is given in terms of a generalized Young functions (specifically, $H(\cdot)$), rather than a usual Young function. Therefore all the arguments used there will be of intrinsic type. This perfectly combines with the type of intrinsic estimates already proved in \cite{BCM3, CM1, CM2}, as we shall see in the next section when \cd{showing} regularity theorems. \cc{Accordingly to the notation already established in \rif{H+-}}, with $B_\varrho \Subset \Omega$ being a ball, we shall denote
\eqn{lemappe} $$
H^{-}_{B_\varrho}(t)=t^{p}+\ai(B_\varrho)t^{q}\qquad \mbox{and} \qquad 
H^{+}_{B_\varrho}(t)=t^{p}+\as(B_\varrho)t^{q}\;.$$ 
We shall again denote, with abuse of notation, $H^{-}_{B_\varrho}(z)\equiv H^{-}_{B_\varrho}(|z|)$ and so forth, also in the case $z \in \ernN$. 

\begin{remark}\label{h-}
\emph{We collect some features of the functions in \rif{lemappe}. We first notice that \ccc{$H^{\pm}_{B_{\varrho}}(\cdot)$} is a Young function in the sense of \cite[Section 2]{DSV2}, and satisfies the $\Delta_{2}$-condition. Since \ccc{$t\mapsto H^{\pm}_{B_{\varrho}}(t)$} is strictly increasing and strictly convex, its inverse \ccc{$(H^{\pm}_{B_{\varrho}})^{-1}$} is strictly increasing and \cd{strictly concave} and \ccc{$(H^{\pm}_{B_{\varrho}})^{-1}(0)=0$}, thus \ccc{$(H^{\pm}_{B_{\varrho}})^{-1}$} is subadditive. Therefore, for all $\lambda\ge 0$, \cc{the subadditivity and the monotonicity} of \ccc{$(H^{\pm}_{B_{\varrho}})^{-1}(t)$} yield 
\eqn{homh}
$$
\ccc{(H^{\pm}_{B_{\varrho}})^{-1}(\lambda t)\le (\lambda+1)(H^{\pm}_{B_{\varrho}})^{-1}(t)}\;.
$$
In particular, if $\lambda \ge 1$, \ccc{$(H^{\pm}_{B_{\varrho}})^{-1}(\lambda t)\le 2\lambda (H^{\pm}_{B_{\varrho}})^{-1}(t)$}. Next, notice that if $B_{\varrho}=B_{\varrho}(x_{0})\Subset \Omega$, then the function \ccc{$(x_{0},\varrho,t)\mapsto H^{\pm}_{B_{\varrho}(x_{0})}(t)$} is continuous \cc{on $\Omega\times [0,\infty)\times [0,\infty)$}. This easily follows from the H\"older continuity of $a(\cdot)$. Finally, for $x_{0}$ fixed, if $\varrho_{1}\le \varrho_{2}$, then $H^{-}_{B_{\varrho_{1}}(x_{0})}(t)\ge H^{-}_{B_{\varrho_{2}}(x_{0})}(t)$, \ccc{($H^{+}_{B_{\varrho_{1}}(x_{0})}(t)\le H^{+}_{B_{\varrho_{2}}(x_{0})}(t)$)} holds uniformly in $t\ge 0$, and, as a consequence, $(H^{-}_{B_{\varrho_{1}}(x_{0})})^{-1}(t)\le(H^{-}_{B_{\varrho_{2}}(x_{0})})^{-1}(t)$, \ccc{(resp. $(H^{+}_{B_{\varrho_{1}}(x_{0})})^{-1}(t)\ge(H^{+}_{B_{\varrho_{2}}(x_{0})})^{-1}(t)$),} holds too for all $t\ge 0$.
}
\end{remark}
We start with a classical lemma (see \cite{BCM1} for a description and references), which is concerned with some properties of Maximal operators with respect to the so called gradient truncation. We recall that the Hardy-Littlewood maximal operator is defined as follows
$$
\mathcal M(f)(x) := \sup_{B_{\varrho}(x) \subset \er^n}\, \mint_{B_{\varrho}(x)} |f(y)|\, dy\;,\quad x \in \er^n\;,
$$
whenever $f \in L^1_{\loc}(\er^n)$.
\begin{lemma}\label{liptr}
Let $\cd{B_{\rr}}\subset \mathbb{R}^{n}$ be a ball and $w\in W^{1,1}_{0}(B_{\varrho},\RN)$ (trivially extended by zero outside $B_{\varrho}$). Then for any $\lambda>0$ there exists $w_{\lambda}\in W^{1,\infty}_{0}(B_{\varrho},\RN)$ such that
\begin{flalign}\label{liptr1}
\nr{Dw_{\lambda}}_{L^{\infty}(B_{\varrho},\mathbb{R}^{N\times n})}\le c\lambda\;,
\end{flalign}
for some positive constant $c\equiv c(n,N)$. Moreover, it holds that
\begin{flalign}\label{liptr2}
B_{\varrho}\cap\{w\not =w_{\lambda}\}\subset \left(B_{\varrho}\cap \{\mathcal M(\snr{Dw})>\lambda\} \right)\cup \ \mbox{negligible set}\;.
\end{flalign}
\end{lemma}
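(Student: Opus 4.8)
The plan is to use the classical Lipschitz (or \emph{gradient}) truncation scheme via the Hardy--Littlewood maximal operator, as recalled in \cite{BCM1}. Regard $w$ as an element of $W^{1,1}(\mathbb{R}^{n},\mathbb{R}^{N})$ with $\supp w\subset\overline{B_{\varrho}}$ and put $U_{\lambda}:=\{x\in\mathbb{R}^{n}\,:\,\mathcal{M}(|Dw|)(x)>\lambda\}$. Since $\mathcal{M}$ is lower semicontinuous, $U_{\lambda}$ is open; since $Dw\in L^{1}(\mathbb{R}^{n})$ has compact support, the weak $(1,1)$ bound together with the decay $\mathcal{M}(|Dw|)(x)\leq c_{n}|x|^{-n}\|Dw\|_{L^{1}}$ for $|x|$ large yields $|U_{\lambda}|\leq c_{n}\lambda^{-1}\|Dw\|_{L^{1}}$ and, in particular, that $U_{\lambda}$ is bounded. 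On the complement $G_{\lambda}:=\mathbb{R}^{n}\setminus U_{\lambda}$ the precise representative $\bar{w}$ of $w$ is Lipschitz with constant comparable to $\lambda$: this follows from the standard pointwise estimate, obtained by comparing $\bar{w}$ with its averages over dyadic balls and valid at all Lebesgue points,
\[
|\bar{w}(x)-\bar{w}(y)|\leq c_{n}|x-y|\big(\mathcal{M}(|Dw|)(x)+\mathcal{M}(|Dw|)(y)\big)\;,
\]
which gives $|\bar{w}(x)-\bar{w}(y)|\leq 2c_{n}\lambda|x-y|$ whenever $x,y\in G_{\lambda}$. Note also that $\bar{w}\equiv0$ on $G_{\lambda}\setminus\overline{B_{\varrho}}$, because $w$ vanishes there.

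The second step is to extend $\bar{w}|_{G_{\lambda}}$ to a globally Lipschitz map losing only a dimensional factor. The elementary route is McShane's formula applied componentwise: for $k=1,\dots,N$ set $w_{\lambda}^{(k)}(x):=\sup_{y\in G_{\lambda}}\big(\bar{w}^{(k)}(y)-2c_{n}\lambda|x-y|\big)$, which is $2c_{n}\lambda$-Lipschitz and agrees with $\bar{w}^{(k)}$ on $G_{\lambda}$; then $w_{\lambda}:=(w_{\lambda}^{(1)},\dots,w_{\lambda}^{(N)})$ is $2\sqrt{N}\,c_{n}\lambda$-Lipschitz, so that \eqref{liptr1} holds with $c\equiv c(n,N)$. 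Since $w_{\lambda}=\bar{w}=w$ a.e. on $G_{\lambda}$, the set $\{w\neq w_{\lambda}\}$ differs from a subset of $U_{\lambda}$ only by the null set $\{w\neq\bar{w}\}$, which is precisely \eqref{liptr2}. At this stage $w_{\lambda}\in W^{1,\infty}(\mathbb{R}^{n},\mathbb{R}^{N})$ is compactly supported (as $U_{\lambda}$ is bounded and $\bar{w}=0$ on $G_{\lambda}$ far from $B_{\varrho}$), but it need not vanish outside $B_{\varrho}$ when $U_{\lambda}\not\subset B_{\varrho}$.

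To secure $w_{\lambda}\in W^{1,\infty}_{0}(B_{\varrho},\mathbb{R}^{N})$ I would instead run the construction \emph{localized to} $B_{\varrho}$: replace $\mathcal{M}$ by the maximal operator $\mathcal{M}^{*}$ taken only over balls contained in $B_{\varrho}$, set $U^{*}_{\lambda}:=\{x\in B_{\varrho}\,:\,\mathcal{M}^{*}(|Dw|)(x)>\lambda\}$, take a Whitney decomposition $U^{*}_{\lambda}=\bigcup_{i}Q_{i}$ with $\ell(Q_{i})\approx\dist(Q_{i},\partial U^{*}_{\lambda})$ and bounded overlap of the dilates $\tfrac{9}{8}Q_{i}$, fix a subordinate partition of unity $\{\psi_{i}\}$ with $|D\psi_{i}|\leq c_{n}/\ell(Q_{i})$, and define $w_{\lambda}:=\bar{w}$ on $G_{\lambda}$ and $w_{\lambda}:=\sum_{i}\psi_{i}\,\bar{w}(z_{i})$ on $U^{*}_{\lambda}$, with $z_{i}$ a good point within distance $c_{n}\ell(Q_{i})$ of $Q_{i}$ and with the convention that $\partial B_{\varrho}$ and $\mathbb{R}^{n}\setminus B_{\varrho}$ are adjoined to the good set carrying the value $0$. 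This convention is legitimate precisely because $w$ has zero trace on $\partial B_{\varrho}$, so that Poincar\'e's inequality keeps the averages of $\bar{w}$ over Whitney cubes abutting $\partial B_{\varrho}$ bounded by $c_{n}\lambda\ell(Q_{i})$; it forces $w_{\lambda}\equiv0$ outside $B_{\varrho}$, and, since $\mathcal{M}^{*}(|Dw|)\leq\mathcal{M}(|Dw|)$, up to a negligible set $B_{\varrho}\cap\{w\neq w_{\lambda}\}$ is contained in $U^{*}_{\lambda}\subset B_{\varrho}\cap\{\mathcal{M}(|Dw|)>\lambda\}$, which is \eqref{liptr2}. The step I expect to be the main obstacle is the verification of the uniform bound $\|Dw_{\lambda}\|_{L^{\infty}}\leq c(n,N)\lambda$ in \eqref{liptr1}: on each $Q_{i}$ one exploits $\sum_{i}D\psi_{i}\equiv0$ to write $Dw_{\lambda}=\sum_{i}D\psi_{i}\,(\bar{w}(z_{i})-\bar{w}(z_{j}))$ for a fixed index $j$ with $x\in\tfrac{9}{8}Q_{j}$, estimates $|\bar{w}(z_{i})-\bar{w}(z_{j})|\leq 2c_{n}\lambda|z_{i}-z_{j}|\leq c_{n}\lambda\ell(Q_{i})$ via the pointwise inequality above, and sums over the finitely many overlapping cubes; matching this Lipschitz bound across the interface between $G_{\lambda}$ and $U^{*}_{\lambda}$ --- including the part of $\partial B_{\varrho}$ lying inside the bad set --- rests once more on comparing $w_{\lambda}$ on a boundary cube with $\bar{w}$ at a neighbouring good point, which is exactly where the zero-trace hypothesis enters. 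Collecting these estimates gives $w_{\lambda}\in W^{1,\infty}_{0}(B_{\varrho},\mathbb{R}^{N})$ satisfying \eqref{liptr1} and \eqref{liptr2}.
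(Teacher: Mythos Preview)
The paper does not actually prove this lemma: it is stated as a classical result and attributed to the literature (see \cite{BCM1} for a description and references), with no argument given. Your proposal therefore supplies what the paper omits, and the approach you take---defining the good set via the sublevel set of the maximal function, using the pointwise oscillation estimate to get a Lipschitz bound on the good set, and then extending while keeping zero boundary values---is the standard construction behind the cited result. In that sense your proof and the paper's (implicit) proof are the same.

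A minor remark on presentation: your first route via componentwise McShane extension is correct but, as you yourself note, does not by itself enforce $w_{\lambda}\in W^{1,\infty}_{0}(B_{\varrho})$; the Whitney-based localization you then describe is the right fix and is precisely what underlies the version stated in the paper. The verification that the Lipschitz bound survives across the interface and near $\partial B_{\varrho}$ is indeed where the zero-trace assumption is used, and your sketch of this step (comparing values at nearby good points, using $\sum_{i}D\psi_{i}=0$, bounded overlap of Whitney cubes) is correct. Since the paper only needs \eqref{liptr1}--\eqref{liptr2} as a black box, either of your descriptions would suffice for its purposes.
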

We have a first quantitative harmonic approximation type lemma. 
\begin{lemma}[Intrinsic and quantitative $g(x,\cdot)$-harmonic approximation]\label{int}
Let $B_{r}\subset \mathbb{R}^{n}$ be a ball with radius $r\le 1$ and such that \cc{$B_{2r} \Subset \Omega$}, \ccc{$\varepsilon \in (0,1)$} and $v\in (W^{1,H}\cap L^{\infty})(B_{r},\RN)$, $N\geq 1$, be a function satisfying
\begin{flalign}\label{har1}
\mint_{B_{r}}H(x,Dv) \, dx \le c_{1}H^{-}_{B_{r}}\left(\frac{\varepsilon}{r}\right)\;,
\end{flalign}
\eqn{maggiorefa}
$$
\left(\mint_{B_{r/2}}[H(x,Dv)]^{1+\delta} \, dx\right)^{1/(1+\delta)}\le \tilde c_1\mint_{B_{r}}H(x, Dv) \, dx\;,
$$
and 
\begin{flalign}\label{har2}
\left |\ \mint_{B_{r/2}}\partial g(x,Dv)\cdot D\varphi \, dx \  \right |\le c_{2}\varepsilon^{t} \mint_{B_{r}}\left[H(x,Dv)+H\left(x,\nr{D\varphi}_{L^{\infty}(B_{r/2})}\right)\right] \, dx\;,
\end{flalign}
for some $t\in(0,1]$, $\delta \in (0,1)$ and all $\varphi \in C^{\infty}_{c}(B_{r/2},\RN)$, where $g\colon \Omega \times \ernN \to [0, \infty)$ is of the type in \trif{lagg} under assumptions $\cd{\eqref{assF}_{1,2,3,4}}$, and where $c_{1}, \tilde c_1$ and $c_{2}$ are fixed constants larger than one. Then there exists $h \in W^{1,H}_{v}(B_{r/2},\RN)$ such that
\begin{flalign}\label{hh}
\int_{B_{r/2}}\partial g(x,Dh)\cdot D\varphi \, dx =0 \quad \mbox{for \ all \ } \varphi \in W^{1,H}_0(B_{r/2}, \er^N)\;,
\end{flalign}
\eqn{massimo}
$$
\|h\|_{L^{\infty}(B_{r/2})} \leq \sqrt{N}\|v\|_{L^{\infty}(B_{r/2})}
$$
and 
\begin{flalign}\label{hhh}
\mint_{B_{r/2}}\left(\snr{V_{p}(Dv)-V_{p}(Dh)}^{2}+a(x)\snr{V_{q}(Dv)-V_{q}(Dh)}^{2}\right) \, dx \le c\varepsilon^{m}\mint_{B_{r}}H(x,Dv) \, dx\;,
\end{flalign}
with $c\equiv c(\data_0)$ and $m=m\left(\data, \vv, t, \delta\right)$ (see \trif{dataref2} below for the meaning of $\data_0$). Finally, the function \cc{$h \in W^{1,H}_{v}(B_{r/2},\RN)$} is the unique solution of the Dirichlet problem
\begin{flalign}\label{bdp}
h \mapsto \min_{w\in W^{1,H}_{v}(B_{r/2},\RN)} \int_{B_{r/2}}g(x,Dw) \, dx\;.
\end{flalign}
\end{lemma}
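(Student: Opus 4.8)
The plan is to proceed in four stages. First I would \emph{define} $h$ directly as the unique minimizer of the Dirichlet problem \rif{bdp}: existence follows from the direct method in $W^{1,H}_v(B_{r/2},\er^N)$ (the functional is coercive by \rif{assF}$_1$ and weakly lower semicontinuous, since $z\mapsto \tilde F(x,v,|z|)$ is convex by the structure assumption), and uniqueness from the strict monotonicity encoded in \rif{monotonicity}. The Euler--Lagrange equation \rif{hh} in the intrinsic form then follows exactly as in the derivation carried out in Section \ref{eldp}, testing with $\varphi\in W^{1,H}_0(B_{r/2},\er^N)$ (the $L^\infty$ restriction there is removed by the $\Delta_2$-property of $H^{\pm}$ and a truncation argument). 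The $L^\infty$ bound \rif{massimo} is obtained by the standard convexity/projection trick: replacing $h$ by its composition with the nearest-point projection onto the ball $B^N_{\sqrt N\|v\|_\infty}$, which does not increase $\int H(x,Dh)$ and has the same boundary trace as $h$, hence must coincide with $h$ by uniqueness.

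Second, I would establish the \emph{comparison estimate} \rif{hhh}. Using \rif{pre} with $z_1 = Dh$, $z_2 = Dv$ together with the minimality of $h$ and the weak form \rif{hh} tested with $\varphi = v-h \in W^{1,H}_0(B_{r/2},\er^N)$, one gets
\begin{flalign*}
\mint_{B_{r/2}}\left(\snr{V_p(Dv)-V_p(Dh)}^2+a(x)\snr{V_q(Dv)-V_q(Dh)}^2\right)dx
\le c\,\mint_{B_{r/2}}\partial g(x,Dv)\cdot D(v-h)\,dx\,,
\end{flalign*}
so the whole problem reduces to estimating the right-hand side, i.e. to quantifying how far $Dv$ is from being $g(x,\cdot)$-harmonic. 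The obstruction is that $v-h$ is only in $W^{1,H}_0$, not $C^\infty_c$, so the quantitative almost-harmonicity hypothesis \rif{har2} cannot be applied to it directly.

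Third — and this is where the \emph{Lipschitz truncation} Lemma \ref{liptr} enters, which I expect to be the main technical obstacle — I would split $v-h$ at level $\lambda$ into $(v-h)_\lambda$ (globally Lipschitz, with $\|D(v-h)_\lambda\|_\infty\le c\lambda$) plus a remainder supported on the bad set $\{\mathcal M(|D(v-h)|)>\lambda\}$. On the good part one smooths $(v-h)_\lambda$ and applies \rif{har2} to control $\mint \partial g(x,Dv)\cdot D(v-h)_\lambda$ by $c_2\varepsilon^t$ times an energy term that is itself controlled, via \rif{har1}, by $H^{-}_{B_r}(\varepsilon/r)$. On the bad part one uses the growth bound $|\partial g(x,Dv)|\lesssim H(x,Dv)/|Dv|$ from \rif{assF}$_2$, Hölder's inequality together with the higher-integrability hypothesis \rif{maggiorefa}, and the weak-$(1,1)$ bound for $\mathcal M$ to show that the measure of the bad set, hence the bad-part contribution, is small — of order $\lambda^{-(\text{something})}$ times the energy. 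Optimizing the free parameter $\lambda$ as a suitable power of $\varepsilon$ balances the two error terms and produces the gain $\varepsilon^m$ with an explicit exponent $m$ depending on $\data$, $\|v\|_{L^\infty(B_r)}$, $t$ and $\delta$; the careful bookkeeping of which powers of $\varepsilon$ survive, and ensuring all constants depend only on $\data_0$, is the delicate point.

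Finally, I would record that \rif{bdp} has already been used to define $h$, so its statement is immediate, and collect the estimates \rif{massimo}, \rif{hhh} and \rif{hh} to conclude. One subtlety worth flagging: throughout one must pass freely between $H$, $H^{-}_{B_r}$ and $H^{+}_{B_r}$, which is legitimate on balls of radius $r\le 1$ because $q<p+\alpha$ forces $\sup_{B_r}a - \inf_{B_r}a \le [a]_{0,\alpha}r^\alpha$ to be reabsorbable into the $p$-part at the relevant scales, exactly as in the proof of Lemma \ref{L5}; this is what keeps the dependence of $c$ and $m$ confined to $\data$ (and $\|v\|_{L^\infty}$).
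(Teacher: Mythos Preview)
Your overall architecture is right and close to the paper's: define $h$ as the minimizer of \rif{bdp}, obtain \rif{massimo} (the paper invokes the maximum principle of \cite{leosie} rather than projecting, but your argument works), reduce via monotonicity plus \rif{hh} to controlling $\mint_{B_{r/2}}\partial g(x,Dv)\cdot D(v-h)\,dx$, and attack this by Lipschitz-truncating $w=v-h$ at a level $\lambda$ to be chosen. The split into a Lipschitz part (where \rif{har2} applies to $w_\lambda$) and a remainder supported on $\{\mathcal M(|Dw|)>\lambda\}$ is exactly what the paper does; the paper organizes it slightly differently, testing \rif{hh} with $w_\lambda$ so that monotonicity only yields $\mathcal V^2$ on the good set and a final interpolation step recovers the full integral, but this is cosmetic.

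The genuine gap is your treatment of the bad set. You propose to combine \rif{maggiorefa} with the weak-$(1,1)$ bound for $\mathcal M$. Neither is enough. First, weak-$(1,1)$ gives only $|\{\mathcal M(|Dw|)>\lambda\}|\lesssim \lambda^{-1}\|Dw\|_{L^1}$; when this is multiplied by the typical size $H^+_{B_r}(\lambda)\approx H^-_{B_r}(\lambda)$ of the truncated integrand (this is where \rif{har1} and $q<p+\alpha$ enter), you pick up a \emph{positive} power of $\lambda$ --- already in the model case $a\equiv 0$ the product behaves like $\lambda^{p-1}\|Dw\|_{L^1}$ --- so no choice of $\lambda$ closes the estimate. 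What is actually needed is the strong $L^{1+\sigma_g}$-boundedness of $\mathcal M$ applied to $H^-_{B_r}(|Dw|)$, which yields the decisive bound $|\{w\neq w_\lambda\}|/|B_{r/2}|\le cM^{-(1+\sigma_g)}$ where $H^-_{B_r}(\lambda)=M\mint_{B_r}H(x,Dv)\,dx$; the extra exponent $\sigma_g>0$ is precisely what makes the bad contribution of order $M^{-\sigma_g}$. Second, to run that maximal estimate one needs $H^-_{B_r}(Dw)\in L^{1+\sigma_g}$, hence a reverse-H\"older inequality for $H(x,Dh)$ \emph{up to the boundary of} $B_{r/2}$. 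This is not \rif{maggiorefa} (which concerns $v$ only) but Lemma~\ref{C2} applied to the Dirichlet minimizer $h$ with datum $v$; the resulting exponent $\sigma_g$ depends on $\|h\|_{L^\infty}\le \sqrt N\|v\|_{L^\infty}$ via Remark~\ref{reun}, which is exactly the origin of the $\|v\|_{L^\infty(B_r)}$-dependence of $m$ in the statement. Without invoking Lemma~\ref{C2} your outline cannot be completed.
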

\begin{remark} \emph{The assumptions considered in Lemma \ref{int} are tailored to the situations where the Lemma will be applied. In the typical applications, $v$ is a minimizer of a constrained problem as considered in Theorem \ref{main1}. This means that the condition $v\in (W^{1,H}\cap L^{\infty})$ is automatically satisfied. For the same reason, assumption \rif{maggiorefa} is satisfied by Lemma \ref{C1}. Finally, the smallness condition \rif{har1} typically occurs when proving partial regularity theorems (see next section). We also wish to point out that the proof we are going to give here allows for further generalizations to cases where instead of the function $H(\cdot)$ one considers more general instances, as for example those in Section \ref{weight}.} 
\end{remark}
\begin{proof} In the following we shall abbreviate, as in \rif{dataref}, as follows
\eqn{dataref2} 
$$\cc{\data_0 \equiv 	\data_0 \left(n,N,\nu,L,p,q, \alpha,[a]_{0,\alpha}, \|v\|_{L^\infty(B_{r})}, c_1, \tilde c_1, c_2\right)\;.}$$
By a standard approximation argument we notice that, if \eqref{har2} holds for every $\varphi \in C^{\infty}_{c}(B_{r/2},\RN)$, then it also holds for every $\varphi \in W^{1,\infty}_{0}(B_{r/2},\RN)$. Now, let $h \in W^{1,H}_{v}(B_{r/2},\RN)$ be the unique solution to the Dirichlet problem \rif{bdp}. This can be obtained as follows. First, notice that solutions are always unique as a consequence of the strict convexity of $z \mapsto g(\cdot, z)$. Then, existence of $h \in W^{1,p}_{v}(B_{r/2},\RN)$ \cd{results} by Direct Methods of the Calculus of Variations. \cd{By minimality and $\eqref{assF}_{1}$, there holds}
\begin{flalign}\label{har5}
\mint_{B_{r/2}}H(x, Dh) \, dx \le \frac{L}{\nu}\mint_{B_{r/2}}H(x,Dv) \, dx\;,
\end{flalign}
\cd{so $h$ satisfies \eqref{hh}.} Moreover, thanks to the assumptions in \rif{lagg} and $\cd{\rif{assF2}_{1}}$, we can apply the maximum principle in \cite[Theorem 2.3]{leosie} and this yields
\rif{massimo}. In particular, we conclude with $h \in W^{1,H}_{v}(B_{r/2},\RN)$. 
Thanks to Remark \ref{reun} and \rif{maggiorefa}, we can also apply Lemma \ref{C2} (with $v$ replaced by $h$ and $u$ replaced by $v$ in the present situation); this, together with \rif{har5}, yield that
\begin{eqnarray}
\notag \left(\mint_{B_{r/2}}[H(x,Dh)]^{1+\sigma_{g}} \, dx\right)^{\frac{1}{1+\sigma_{g}}} &\le  &c\left(\mint_{B_{r/2}}[H(x,Dv)]^{1+\sigma_{g}} \, dx\right)^{\frac{1}{1+\sigma_{g}}}\\
&\leq& c\left(\mint_{B_{r/2}}[H(x,Dv)]^{1+\delta} \, dx\right)^{\frac{1}{1+\delta}}\stackleq{maggiorefa} c\mint_{B_{r}}H(x,Dv) \, dx\;,\label{har6}
\end{eqnarray}
for positive constants $c \equiv c (\data, c_1)$ and $\sigma_g\equiv  \sigma_g\left(\data, \vv\right)$, with $c\geq 1$ and $\sigma_g \in (0,\delta)$. This peculiar dependence of the constants is also a consequence of \rif{massimo} (see again Remark \ref{reun}). In the application of Lemma \ref{C2} we are indeed getting rid of the dependence on $\|h\|_{L^\infty}$ by means of \rif{massimo}. Now, notice that there is no loss of generality in assuming that
$\int_{B_{r}}H(x,Dv) \, dx >0$, otherwise $v\equiv \const$ on $B_{r}$ and the thesis trivially holds for $h\equiv \const$. From Remark \ref{h-}, we have that $t\mapsto H^{-}_{B_{r}}(t)$ is a bijection, so there is a unique $\lambda>0$ such that
\begin{flalign}\label{har3}
\cc{H^{-}_{B_{r}}(\lambda)=M\mint_{B_{r}}H(x,Dv) \, dx\;,}
\end{flalign}
holds for some \cc{$M\ge 1$} whose size will be fixed later. Set $w=v-h \in W^{1,H}_{0}(B_{r/2},\RN)$ and consider $w_{\lambda}\in W^{1,\infty}_{0}(B_{r/2},\RN)$ given by Lemma \ref{liptr}, which satisfies \eqref{liptr1} and \eqref{liptr2}. We deduce that
\begin{eqnarray}\label{har7}
\frac{\snr{B_{r/2}\cap\{w\not =w_{\lambda}\}}}{\snr{B_{r/2}}}&\stackleq{liptr2} & \frac{\snr{B_{r/2}\cap \{\mathcal M(\snr{Dw})>\lambda\}}}{\snr{B_{r/2}}}\nonumber \\
&\stackrel{\mbox{Chebyshev}}{\leq} &\frac{c}{[H^{-}_{B_{r}}(\lambda)]^{1+\sigma_{g}}} \mint_{B_{r/2}}[H^{-}_{B_{r}}(\mathcal M(\snr{Dw}))]^{1+\sigma_{g}}\, dx\nonumber \\
&\stackrel{\mbox{maximal}}{\leq} &\frac{c}{[H^{-}_{B_{r}}(\lambda)]^{1+\sigma_{g}}}\mint_{B_{r/2}}[H^{-}_{B_{r}}(Dw)]^{1+\sigma_{g}} \, dx\nonumber\\
&\le &\frac{c}{[H^{-}_{B_{r}}(\lambda)]^{1+\sigma_{g}}}\mint_{B_{r/2}}[H^{-}_{B_{r}}(Dh)]^{1+\sigma_{g}}+[H^{-}_{B_{r}}(Dv)]^{1+\sigma_{g}} \, dx\nonumber \\
&\stackleq{har6} &\frac{c}{[H^{-}_{B_{r}}(\lambda)]^{1+\sigma_{g}}}\left(\mint_{B_{r}}H(x,Dv) \, dx\right)^{1+\sigma_{g}}\stackleq{har3} \cc{\frac{c}{M^{1+\sigma_{g}}}}\;,
\end{eqnarray}where $c\equiv c(\data_0)$. Now we test the weak formulation of \eqref{bdp} against $w_{\lambda}$ to get
\begin{flalign}\label{tt22}
\notag \mathcal{T}_{1}:=&\mint_{B_{r/2}}\left(\partial g(x,Dv)-\partial g(x,Dh)\right)\cdot Dw_{\lambda}\chi_{\{w=w_{\lambda}\}} \, dx\\
=&\mint_{B_{r/2}}\partial g(x,Dv)\cdot Dw_{\lambda}\, dx -\mint_{B_{r/2}}\left(\partial g(x,Dv)-\partial g(x,Dh)\right)\cdot Dw_{\lambda}\chi_{\{w\not=w_{\lambda}\}} \, dx=:\mathcal{T}_{2}+\mathcal{T}_{3}\;.
\end{flalign}
Upon setting (recall the definition in \rif{vpvq})
\eqn{defiV}
$$\mathcal{V}^{2}:=\snr{V_{p}(Dv)-V_{p}(Dh)}^{2}+a(x)\snr{V_{q}(Dv)-V_{q}(Dh)}^{2}\;,$$ the strict monotonicity \eqref{042} implies there exists a constant \cc{$c\equiv c(n,N,\nu,p,q)$} such that
$$
\mathcal{T}_{1}\ge \frac{1}{c}\mint_{B_{r/2}}\mathcal{V}^{2}\chi_{\{w=w_{\lambda}\}} \, dx\;.
$$
Let us consider term $\mathcal{T}_{2}$; for this we start observing 
\begin{flalign*}
\lambda\stackrel{\rif{har3}}{=}\cc{\left(H^{-}_{B_{r}}\right)^{-1}\left(M\mint_{B_{r}}H(x,Dv) \, dx \right)}\stackrel{\rif{homh},\rif{har1}}{\leq} \cc{2c_{1}M\left(H^{-}_{B_{r}}\right)^{-1}}\left(H^{-}_{B_{r}}\left(\frac{\varepsilon}{r}\right)\right)\le \cc{\frac{2c_1\varepsilon M}{r}\;.}
\end{flalign*}
From this last inequality and \cc{$\eqref{mainbound}_{1}$} we can estimate
\cc{
\begin{eqnarray}\label{har9}
\notag H^{+}_{B_{r}}(\lambda)&=&H^{-}_{B_{r}}(\lambda)+\left[\as(B_{r})-\ai(B_{r})\right]\lambda^{q}\nonumber \\
&\le & H^{-}_{B_{r}}(\lambda)+cr^{\alpha-(q-p)}(\varepsilon M)^{q-p}\lambda^{p}\le c\left[1+(\varepsilon M)^{q-p}\right]H^{-}_{B_{r}}(\lambda)\;,
\end{eqnarray}}with $c\equiv c(p,q,\alpha, [a]_{0,\alpha},c_{1})$. Now we have 
\eqn{har10}
$$
\cc{\mint_{B_{r}}H\left(x,\nr{Dw_{\lambda}}_{L^{\infty}(B_{r/2})}\right) \, dx \stackleq{liptr1} c\mint_{B_{r}}H(x,\lambda) \, dx \le  cH^{+}_{B_{r}}(\lambda)\stackleq{har9} c\left[1+(\varepsilon M)^{q-p}\right]H^{-}_{B_{r}}(\lambda)\;,}
$$
where $c\equiv c(n,N,p,q,\alpha, [a]_{0,\alpha}, c_{1})$, so that
\cc{
\begin{eqnarray*}
\notag \snr{\mathcal{T}_{2}} &\stackrel{\rif{har2} }{\leq} & c_{2}\varepsilon^{t} \mint_{B_{r}}\left[H(x,Dv)+H\left(x,\nr{Dw_\lambda}_{L^{\infty}(B_{r/2})}\right)\right] \, dx
\\
&\stackrel{\rif{har10}}{\leq} &c\eps^t\mint_{B_{r}}H(x,Dv) \, dx+ c \varepsilon^{t}
\left[1+(\varepsilon M)^{q-p}\right]H^{-}_{B_{r}}(\lambda)\notag \\ &\stackleq{har3} &c \varepsilon^{t}M
\left[1+(\varepsilon M)^{q-p}\right] \mint_{B_{r}}H(x,Dv) \, dx\;,
\end{eqnarray*}}
for $c\equiv c(n,N,p,q,\alpha, [a]_{0,\alpha},c_{1},c_{2})$. Finally, for $\mathcal{T}_{3}$, we fix $\kappa \in (0,1)$ to be chosen later on and estimate as follows:
\begin{eqnarray*}%\label{har12}
\snr{\mathcal{T}_{3}}&\stackleq{tt22} &\mint_{B_{r/2}}\left(\snr{\partial g(x,Dh)}+\snr{\partial g(x,Dv)}\right)\snr{Dw_{\lambda}}\chi_{\{w\not =w_{\lambda}\}} \, dx\nonumber \\
&\stackleq{assF} &c\mint_{B_{r/2}}\left(\frac{H(x,Dh)}{\snr{Dh}}+\frac{H(x,Dv)}{\snr{Dv}}\right)\snr{Dw_{\lambda}}\chi_{\{w\not =w_{\lambda}\}} \, dx\nonumber \\
&\stackrel{\mbox{Young}}{\leq} & \kappa \mint_{B_{r/2}}\left[H(x,Dh) +H(x,Dv) \right]\, dx +\frac{c}{\kappa^{q-1}}\mint_{B_{r/2}}H(x,Dw_{\lambda})\chi_{\{w\not =w_{\lambda}\}} \, dx\nonumber \\
&\stackleq{har5} & c\kappa \mint_{B_{r/2}}H(x,Dv) \, dx +\frac{c}{\kappa^{q-1}}\mint_{B_{r/2}}H(x,Dw_{\lambda})\chi_{\{w\not =w_{\lambda}\}} \, dx\nonumber \\
&\stackleq{liptr1} & c\kappa \mint_{B_{r}}H(x,Dv) \, dx +\frac{c}{\kappa^{q-1}}\frac{\snr{B_{r/2}\cap\{w\not=w_{\lambda}\}}}{\snr{B_{r/2}}}H^{+}_{B_{r}}(\lambda)\nonumber \\
&\stackleq{har7} & \cc{c\kappa \mint_{B_{r}}H(x,Dv) \, dx +\frac{c}{\kappa^{q-1}M^{1+\sigma_{g}}}H^{+}_{B_{r}}(\lambda)}\nonumber \\
&\stackleq{har9} &\cc{c\kappa\mint_{B_{r}}H(x,Dv) \, dx +\frac{c}{\kappa^{q-1}M^{1+\sigma_{g}}}\left[1+(\varepsilon M)^{q-p}\right]H^{-}_{B_{r}}(\lambda)}\nonumber \\
&\stackleq{har3} &\cc{c\left\{\kappa+\frac{1}{\kappa^{q-1}M^{\sigma_{g}}}\left[1+(\varepsilon M)^{q-p}\right]\right\}\mint_{B_{r}}H(x,Dv) \, dx}\;,
\end{eqnarray*}
with $c\equiv c(\data_0)$. Collecting the estimates found for $\mathcal{T}_{1}, \mathcal{T}_{2}$ and $\mathcal{T}_{3}$ to \rif{tt22}, we get
\begin{eqnarray}\label{har13}
\notag \mint_{B_{r/2}}\mathcal{V}^{2}\chi_{\{w=w_{\lambda}\}} \, dx &\le& \cc{c\left\{\kappa+\varepsilon^{t}M+\varepsilon^{t+q-p}M^{q-p+1}+\frac{1}{\kappa^{q-1}M^{\sigma_{g}}} +\frac{(\varepsilon M)^{q-p}}{\kappa^{q-1}M^{\sigma_{g}}}\right\}\mint_{B_{r}}H(x,Dv) \, dx}\nonumber\\
&=:&\cc{cS(\kappa, \varepsilon,M)\mint_{B_{r}}H(x,Dv) \, dx}\;,
\end{eqnarray}
again with $c\equiv c(\data_0)$. Now let $\theta \in (0,1)$ be a number to be fixed in some lines. \cc{From H\"older's inequality, \eqref{har7} and \eqref{har5} we obtain}
$$
\left(\mint_{B_{r/2}}\mathcal{V}^{2\theta}\chi_{\{w\not =w_{\lambda}\}} \, dx\right)^{1/\theta}\le \left(\frac{\snr{B_{r/2}\cap\{w\not=w_{\lambda}\}}}{\snr{B_{r/2}}}\right)^{\frac{1-\theta}{\theta}}\mint_{B_{r/2}}\mathcal{V}^{2} \, dx\le \cc{\frac{c}{M^{(1+\sigma_{g})\frac{1-\theta}{\theta}}}\mint_{B_{r}}H(x,Dv)\, dx}
$$
for $c\equiv c(\data_0)$ and, again by H\"older's inequality and \eqref{har13},
$$
\cc{\left(\mint_{B_{r/2}}\mathcal{V}^{2\theta}\chi_{\{w =w_{\lambda}\}} \, dx\right)^{1/\theta}\le cS(\kappa,\varepsilon,M)\mint_{B_{r}}H(x,Dv) \, dx\;,}
$$
with $c\equiv c(\data_0)$. Merging the content of the last two displays now gives
\eqn{har16}
$$
\cc{\left(\mint_{B_{r/2}}\mathcal{V}^{2\theta}\, dx\right)^{1/\theta}\le c\left\{S(\kappa,\varepsilon,M)+\frac{1}{M^{(1+\sigma_{g})\frac{1-\theta}{\theta}}}\right\}\mint_{B_{r}}H(x,Dv) \, dx\;}.
$$
In the above inequality $\eps$ is fixed in the statement of the theorem, while $\kappa \in (0,1)$ and \cc{$M\geq 1$} are still free parameters to be chosen arbitrarily. We take 
$$\cc{M=\frac{1}{\varepsilon^{\frac{t}{2}}}> 1}\qquad \mbox{and}\qquad \kappa=\varepsilon^{\frac{\sigma_{g}t}{4(q-1)}}\in (0, 1)$$and set $$\bar{m}:=\frac{t\sigma_{g}}{4}\min\left\{1,\frac{1}{q-1}\right\},$$ so that, recalling the expression of $S(\kappa, \varepsilon,M)$ in \rif{har13}, we find
$$
\cc{S(\kappa, \varepsilon,M)+\frac{1}{M^{(1+\sigma_{g})\frac{1-\theta}{\theta}}} \leq 5 \eps^{\bar{m}}+ \eps^{\frac{t(1+\sigma_{g})(1-\theta)}{2\theta}} \leq 6 \eps^{\tilde m(\theta)}}
$$
where
%$$\tilde{m}(\theta):=\min\left\{ \frac{\sigma_{g}t}{4},\frac{t}{2},\frac{\sigma_{g}t}{4(q-1)},q-p, \frac{t(1+\sigma_{g})(1-\theta)}{2\theta} \right\}\equiv m(\sigma_g, t, q,p, \theta)\;,$$
\eqn{dipendenzam}
$$\tilde{m}(\theta):=\min\left\{ \cd{\bar{m}}, \frac{t(1+\sigma_{g})(1-\theta)}{2\theta} \right\}\equiv \tilde m(\sigma_g, t, q, \theta)\equiv \tilde m(\data, \vv, t,\delta, \theta)\;,$$
and therefore \eqref{har16} reads as
\begin{flalign}\label{har17}
\left(\mint_{B_{r/2}}\mathcal{V}^{2\theta} \, dx\right)^{\frac{1}{2\theta}}\le c\varepsilon^{\frac{\tilde{m}(\theta)}{2}}\left(\mint_{B_{r}}H(x,Dv) \, dx\right)^{\frac12}
\end{flalign}
with $c\equiv c(\data_0)$. The final dependence on the various constants of $m$ in \rif{dipendenzam} has been obtained recalling that $\sigma_g\equiv  \sigma_g\left(\data, \vv\right)$; notice also that the dependence upon the initial higher integrability exponent $\delta$ appearing in \rif{maggiorefa} comes from the restriction \cc{$\sigma_g< \delta$}. Next, notice that from the very definition of $\mathcal{V}$ in \rif{defiV}, and using  \rif{har6}, we readily infer
\eqn{har177}
$$
\left(\mint_{B_{r/2}}\mathcal{V}^{2(1+\sigma_{g})} \, dx\right)^{\frac{1}{2(1+\sigma_{g})}}\leq c\left(\mint_{B_{r}}H(x,Dv) \, dx\right)^{\frac12}\;,
$$
again for $c\equiv c(\data_0)$. Next, we choose 
\eqn{sceltatheta}
$$
\theta:=\frac{1+\sigma_{g}}{(1+2\sigma_{g})} \equiv \theta\left(\data, \vv, \delta\right)
$$
and apply H\"older's inequality with conjugate exponents $\frac{2(1+\sigma_{g})}{1+2\sigma_{g}}$ and $2(1+\sigma_{g})$, to get
\begin{flalign*}
\mint_{B_{r/2}}\mathcal{V}^{2} \, dx \le \left(\mint_{B_{r/2}}\mathcal{V}^{\frac{2(1+\sigma_{g})}{1+2\sigma_{g}}}\, dx\right)^{\frac{1+2\sigma_{g}}{2(1+\sigma_{g})}}\left(\mint_{B_{r/2}}\mathcal{V}^{2(1+\sigma_{g})} \, dx\right)^{\frac{1}{2(1+\sigma_{g})}}\stackrel{\rif{har17}, \rif{har177}}{\leq} c\varepsilon^{\frac{\tilde{m}(\theta)}{2}}\mint_{B_{r}}H(x,Dv) \, dx\;,
\end{flalign*}
with where $c\equiv c(\data_0)$. This concludes the proof of \rif{hhh}, and of Lemma \ref{int}, by fixing $m:= \tilde{m}(\theta)/2$ and the dependence $m=m\left(\data, \vv, t, \delta\right)$ claimed in the statement follows by looking at \rif{dipendenzam} and \rif{sceltatheta}. 
\end{proof}
We next report another harmonic type approximation lemma of the type already considered in \cite{BCM3}. On the contrary to Lemma \ref{int}, this one involves a classical Young function $H_0(\cdot)$, i.e., no dependence on $x$ is considered
 \eqn{H0} 
$$
H_0(t)=t^{p}+a_0t^{q}\;, \qquad a_0\geq0\;.
$$
This time we shall consider a $C^1(\ernN)\cap C^2(\ernN\setminus\{0\})$-regular integrand 
$g_{0}\colon \ernN \to [0, \infty)$ such that
\eqn{assF22}
$$
g_0(z)= \tilde g_0(|z|)\ \mbox{holds for every $z \in \ernN$} \ \mathrm{with} \ t \mapsto \tilde g_0(t) \ \mbox{non-decreasing}
$$
where $\tilde g_0\colon [0, \infty) \to [0, \infty)$ of class $C^1[0, \infty)\cap C^2(0, \infty)$. We shall consider the following set of 
assumptions:
\begin{flalign}\label{assg0}
\begin{cases}
\ \nu H_{0}(z)\le g_{0}(z)\le LH_{0}(z)\\
\ \snr{\partial g_{0}(z)}\snr{z}+\snr{\partial^{2}g_{0}(z)}\snr{z}^{2}\le LH_{0}(z)\\
\ \nu (\snr{z}^{p-2}+a_{0}\snr{z}^{q-2})\snr{\xi}^{2}\le \langle \partial^{2}g_{0}(z)\xi,\xi\rangle\;,
\end{cases}
\end{flalign}
considered with the same notation as in \trif{assF}, for suitable numbers $0 < \nu \leq  1  \leq L < +\infty$ (not necessarily the same as appearing in \rif{assF}).  
We then have the following approximation lemma, which is a different version of \cite[Lemma 1]{BCM3}:
\begin{lemma}[Quantitative $g_{0}$-harmonic approximation]\label{harg}
Let $B_{r}\subset \mathbb{R}^{n}$ be a ball with radius $r\le 1$, $\varepsilon \in (0,1]$ and $v\in W^{1,H_{0}}(B_{r},\RN)$, $N\geq 1$, be a function satisfying 
\eqn{maggiorefadopo}
$$
\left(\mint_{B_{r/2}}[H_0(Dv)]^{1+\delta} \, dx\right)^{1/(1+\delta)}\le \tilde c_1\mint_{B_{r}}H_{0}(Dv) \, dx\;,
$$
for some $\delta \in (0,1)$ and
\begin{flalign}\label{harg2}
\left |\ \mint_{B_{r/2}}\partial g_{0}(Dv)\cdot D\varphi \, dx \  \right |\le c_{2}\varepsilon^{t} \mint_{B_{r}}\left[H_{0}(Dv)+H_{0}\left(\nr{D\varphi}_{L^{\infty}(B_{r/2})}\right)\right] \, dx\;,
\end{flalign}
where $t \in (0,1]$ and all $\varphi \in C^{\infty}_{c}(B_{r/2},\RN)$, where $\tilde c_{1}$ and $c_{2}$ are absolute constants and under assumptions \trif{assF22}-\trif{assg0}. Then there exists $h_{0} \in W^{1,H_{0}}_{v}(B_{r/2},\RN)$ such that
\begin{flalign}\label{hhg}
\int_{B_{r/2}} \partial g_{0}(Dh_{0})\cdot D\varphi \, dx =0 \quad \mbox{for \ all \ }\varphi \in W^{1,H_{0}}_0(B_{r/2},\RN)\;,
\end{flalign}
\eqn{massimo00}
$$
\|h_0\|_{L^{\infty}(B_{r/2})} \leq \sqrt{N}\|v\|_{L^{\infty}(B_{r/2})}
$$
and 
$$
\mint_{B_{r/2}}\left(\snr{V_{p}(Dv)-V_{p}(Dh_{0})}^{2}+a_{0}\snr{V_{q}(Dv)-V_{q}(Dh_{0})}^{2} \right)\, dx \le c\varepsilon^{m}\mint_{B_{r}}H_{0}(Dv) \, dx\;,
$$
\cc{with $c\equiv c(n,N,\nu,L,p,q,\tilde c_1, c_2)$ and $m=m(n,N,\nu,L,p,q, t, \delta)$}. Finally, the function \cc{$h_0 \in W^{1,H_0}_{v}(B_{r/2},\RN)$} is the unique solution of the Dirichlet problem
\begin{flalign}\label{bdpg}
h \mapsto \min_{w\in W^{1,H_{0}}_{v}(B_{r/2},\RN)}\int_{B_{r/2}}g_{0}(Dh_{0}) \, dx\;.
\end{flalign}
\end{lemma}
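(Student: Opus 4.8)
The plan is to follow, almost line by line, the proof of Lemma~\ref{int}, exploiting that here the integrand $g_0$ is autonomous: then $H_0$ simultaneously plays the roles of $H^-_{B_r}$ and $H^+_{B_r}$, the corrective factors $[1+(\varepsilon M)^{q-p}]$ occurring in \eqref{har9}--\eqref{har10} reduce to absolute constants, and the smallness hypothesis \eqref{har1} --- which in Lemma~\ref{int} serves only to absorb those factors --- is no longer needed.

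First I would \emph{define} $h_0$ as the unique solution of the Dirichlet problem \eqref{bdpg}: existence holds by Direct Methods in $W^{1,p}_v(B_{r/2},\RN)$ (coercivity and weak lower semicontinuity from \eqref{assg0}$_1$ and convexity of $g_0$), and uniqueness from the strict convexity of $z\mapsto g_0(z)$ granted by \eqref{assg0}$_3$. Minimality and \eqref{assg0}$_1$ yield $\mint_{B_{r/2}}H_0(Dh_0)\,dx\le (L/\nu)\mint_{B_{r/2}}H_0(Dv)\,dx$, and, $g_0$ being radial in $z$ by \eqref{assF22}, the maximum principle of \cite[Theorem~2.3]{leosie} gives \eqref{massimo00}, so that in fact $h_0\in W^{1,H_0}_v(B_{r/2},\RN)$. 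The Euler--Lagrange system \eqref{hhg} then follows from the one-parameter variation $h_0+s\varphi$, $\varphi\in W^{1,H_0}_0(B_{r/2},\RN)$, dividing by $s$ and letting $s\to0$ by dominated convergence with dominating function $c[H_0(Dh_0)+H_0(D\varphi)]\in L^1$, as granted by \eqref{assg0}$_2$, Young's inequality and the $\Delta_2$-property of $H_0$; this is far simpler than the projection argument of Section~\ref{eldp}. Finally, since $g_0$ is autonomous and satisfies the uniform bounds \eqref{assg0}, Lemma~\ref{C2} applies in the form recorded in Remark~\ref{reun2}; used with $v$ in place of $u$ and $h_0$ in place of $v$, together with \eqref{maggiorefadopo} and the energy comparison, it gives
\[
\left(\mint_{B_{r/2}}[H_0(Dh_0)]^{1+\sigma_g}\,dx\right)^{\frac{1}{1+\sigma_g}}\le c\,\mint_{B_r}H_0(Dv)\,dx
\]
for some $\sigma_g\in(0,\delta)$, with $c,\sigma_g$ depending only on $n,N,\nu,L,p,q,\tilde c_1$; no $\|v\|_{L^\infty}$ enters, precisely because $g_0$ and $H_0$ carry neither $x$- nor $v$-dependence.

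For the quantitative comparison I would reproduce the truncation scheme of Lemma~\ref{int}. Set $w:=v-h_0\in W^{1,H_0}_0(B_{r/2},\RN)$, extended by zero; let $\lambda>0$ be the unique number with $H_0(\lambda)=M\mint_{B_r}H_0(Dv)\,dx$, $M\ge1$ a free parameter; and let $w_\lambda\in W^{1,\infty}_0(B_{r/2},\RN)$ be the Lipschitz truncation of $w$ from Lemma~\ref{liptr}, so that $\nr{Dw_\lambda}_{L^\infty}\le c\lambda$ and, by Chebyshev, \eqref{liptr2}, the maximal inequality and the higher integrability above, $|B_{r/2}\cap\{w\neq w_\lambda\}|/|B_{r/2}|\le c/M^{1+\sigma_g}$. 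Testing \eqref{hhg} against $w_\lambda$ and splitting as in \eqref{tt22} into $\mathcal T_1+\mathcal T_2+\mathcal T_3$, I would estimate $\mathcal T_1\ge c^{-1}\mint_{B_{r/2}}\mathcal V^2\chi_{\{w=w_\lambda\}}\,dx$ by the monotonicity inequality \eqref{monotonicity} (valid for $g_0$ by \eqref{assg0}$_3$, with $\mathcal V$ as in \eqref{defiV} but $a(x)$ replaced by $a_0$); $|\mathcal T_2|\le c\,\varepsilon^t M\mint_{B_r}H_0(Dv)\,dx$ from \eqref{harg2} with $\varphi=w_\lambda$, the Lipschitz bound and $H_0(\lambda)=M\mint_{B_r}H_0(Dv)\,dx$ --- here, unlike in Lemma~\ref{int}, no bound of $\lambda$ in terms of $\varepsilon M/r$ is needed, because $H_0^+=H_0^-$; and $|\mathcal T_3|\le c\{\kappa+\kappa^{-(q-1)}M^{-\sigma_g}\}\mint_{B_r}H_0(Dv)\,dx$ from \eqref{assg0}$_2$, Young's inequality with parameter $\kappa\in(0,1)$, the Lipschitz bound and the measure estimate. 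Collecting, $\mint_{B_{r/2}}\mathcal V^2\chi_{\{w=w_\lambda\}}\,dx\le c\,S(\kappa,\varepsilon,M)\mint_{B_r}H_0(Dv)\,dx$ with $S(\kappa,\varepsilon,M):=\kappa+\varepsilon^t M+\kappa^{-(q-1)}M^{-\sigma_g}$. As in Lemma~\ref{int}, I would then pass to $\mint\mathcal V^2$: bound $\mint\mathcal V^{2\theta}$ for $\theta<1$ by splitting over $\{w=w_\lambda\}$ and its complement (Hölder plus the measure bound on the latter); combine with the reverse--Hölder estimate $(\mint\mathcal V^{2(1+\sigma_g)})^{1/(2(1+\sigma_g))}\le c(\mint_{B_r}H_0(Dv)\,dx)^{1/2}$, which follows from $\mathcal V^2\lesssim H_0(Dv)+H_0(Dh_0)$ (by \eqref{V}) and the higher integrability; take $\theta=(1+\sigma_g)/(1+2\sigma_g)$; and finally choose $M=\varepsilon^{-t/2}$ and $\kappa=\varepsilon^{\sigma_g t/(4(q-1))}$ to arrive at $\mint_{B_{r/2}}\mathcal V^2\,dx\le c\,\varepsilon^m\mint_{B_r}H_0(Dv)\,dx$ with $m=m(n,N,\nu,L,p,q,t,\delta)$ as dictated by \eqref{dipendenzam}--\eqref{sceltatheta}.

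I expect essentially no new conceptual difficulty: the lemma is a simplification of Lemma~\ref{int}. The one place where care is genuinely required is checking that the argument still closes \emph{without} the smallness hypothesis \eqref{har1}; as explained above, it does, since the only use of \eqref{har1} in Lemma~\ref{int} is in the chain \eqref{har9}--\eqref{har10} bounding $H^+_{B_r}(\lambda)$ by $H^-_{B_r}(\lambda)$, and this becomes a triviality when $\as(B_r)=\ai(B_r)=a_0$, leaving the $\mathcal T_2$ and $\mathcal T_3$ estimates untouched. The secondary point, the independence of all constants of $\|v\|_{L^\infty}$, is automatic in this autonomous setting, because neither the higher integrability constants of Remark~\ref{reun2} nor the Sobolev--Poincar\'e constants underlying their proof carry an $L^\infty$-norm once the integrand depends on neither $x$ nor $v$.
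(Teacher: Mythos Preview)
Your proposal is correct and follows essentially the same approach as the paper's own proof: define $h_0$ via the Dirichlet problem \eqref{bdpg}, obtain the Euler--Lagrange system, the maximum principle, and the higher integrability from Lemma~\ref{C2}/Remark~\ref{reun2}, then run the Lipschitz truncation scheme of Lemma~\ref{int} with the splitting $\mathcal T_1+\mathcal T_2+\mathcal T_3$ and the same choices of $\lambda,M,\kappa,\theta$. You have also correctly identified the single conceptual simplification --- that the chain \eqref{har9}--\eqref{har10} collapses because $H^+_{B_r}=H^-_{B_r}=H_0$, making hypothesis~\eqref{har1} unnecessary --- and the reason the constants are independent of $\|v\|_{L^\infty}$.
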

\begin{proof} The proof is rather close to that of Lemma \ref{harg}. For this reason we only give a sketch of it. Again, in \eqref{harg2} we can consider $\varphi \in W^{1,\infty}_{0}(B_{r/2},\RN)$. This time we define $h_{0}\in W^{1,H_{0}}(B_{r/2},\RN)$ as the unique solution of the Dirichlet problem
\rif{bdpg}, so that \rif{hhg}-\rif{massimo00} hold. Moreover, $\eqref{assg0}_{1}$ and minimality yield
\begin{flalign}\label{harg5}
\mint_{B_{r/2}}H_{0}(Dh_{0}) \, dx \le \frac{L}{\nu} \mint_{B_{r/2}}H_{0}(Dv) \, dx\;.
\end{flalign}
By Lemma \ref{C2} (with constant coefficients, see Remark \ref{reun2}), we get, as for \rif{har6} and using \rif{harg5}, that 
\begin{flalign}\label{harg6}
\cd{\left(\mint_{B_{r/2}}[H_0(Dh_0)]^{1+\sigma_{g}} \, dx\right)^{\frac{1}{1+\sigma_{g}}}\le c\left(\mint_{B_{r/2}}[H_0(Dv)]^{1+\sigma_{g}} \, dx\right)^{\frac{1}{1+\sigma_{g}}}\stackleq{maggiorefadopo} c\mint_{B_{r}}H_0(Dv) \, dx\;,}
\end{flalign}
holds for \cc{$\sigma_{g}\equiv \sigma_g(n,N,\nu,L,p,q)\in (0,\delta)$} and with \cc{$c\equiv c(n,N,\nu,L,p,q, \tilde c_1)$}. Proceeding as for the proof of Lemma \ref{int}, we find $\lambda>0$ such that
\begin{flalign}\label{harg3}
\cc{H_{0}(\lambda)=M\mint_{B_{r}}H_{0}(Dv) \, dx\;,}
\end{flalign}
for some \cc{$M\ge 1$} to be specified later on. Set $w=v-h_{0}\in W^{1,H_{0}}_{0}(B_{r/2},\mathbb{R}^{N})$ and consider $w_{\lambda}$ given by Lemma \ref{liptr} matching \eqref{liptr1}-\eqref{liptr2}. As for the proof of \rif{har7}, but using \eqref{harg6} and \eqref{harg3}, we deduce that
\eqn{harg7}
$$
\frac{\snr{B_{r/2}\cap \{w \not = w_{\lambda}\}}}{\snr{B_{r/2}}}\le  \frac{c}{[H_{0}(\lambda)]^{1+\sigma_{g}}}\left(\mint_{B_{r}}H_{0}(Dv) \, dx\right)^{1+\sigma_{g}}\stackleq{harg3} \cc{\frac{c}{M^{1+\sigma_{g}}}}\;,
$$
with \cc{$c\equiv c(n,N,\nu,L,p,q, \tilde c_1)$}. Now we test \rif{hhg} against $w_{\lambda}$ and set
\begin{flalign*}
\mathcal{T}_{1}:=&\mint_{B_{r/2}}\left(\partial g_{0}(Dv)-\partial g_{0}(Dh_{0})\right)\cdot Dw_{\lambda}\chi_{\{w=w_{\lambda}\}} \, dx\\
=&\mint_{B_{r/2}}\partial g_{0}(Dv)\cdot Dw_{\lambda} \, dx -\mint_{B_{r/2}}\left(\partial g_{0}(Dv)-\partial g_{0}(Dh_{0})\right)\cdot Dw_{\lambda}\chi_{\{w\not = w_{\lambda}\}} \, dx=:\mathcal{T}_{2}+\mathcal{T}_{3}\;.
\end{flalign*} 
This time, as in \rif{defiV}, we set $\mathcal{V}_{0}^{2}:=\snr{V_{p}(Dv)-V_{p}(Dh_{0})}^{2}+a_{0}\snr{V_{q}(Dv)-V_{q}(Dh_{0})}^{2}$. By monotonicity of $\partial g_0(\cdot)$ (which is similar to \rif{monotonicity} for $a(x)\equiv a_0$), there is \cc{$c\equiv c(n,N,\nu,p,q)$} such that
\begin{flalign}\label{harg8}
\mathcal{T}_{1}\ge \frac{1}{c}\mint_{B_{r/2}}\mathcal{V}_{0}^{2}\chi_{\{ w=w_{\lambda} \}} \, dx\;.
\end{flalign}
As for $\mathcal{T}_{2}$, from \eqref{harg2}, \eqref{liptr1} and \eqref{harg3}, we obtain
\begin{flalign}\label{harg9}
\cc{\snr{\mathcal{T}_{2}}\le c\varepsilon^{t}M\mint_{B_{r}}H_{0}(Dv) \, dx\;,}
\end{flalign}
where \cc{$c\equiv c(n,N,L,p,q,c_{1})$}. Finally, for $\mathcal{T}_{3}$, we fix $\kappa \in (0,1)$ to be chosen. Then, by using $\eqref{assg0}_{2}$, Young's inequality, \eqref{harg5}, \eqref{harg7} and \eqref{liptr1}, and proceeding as in the proof of the analogous term $\mathcal T_3$ from Lemma \ref{int}, we have 
\begin{eqnarray}\label{harg12}
\snr{\mathcal{T}_{3}} 
 &\le &c\mint_{B_{r/2}}\left(\frac{H_{0}(Dh_{0})}{\snr{Dh_{0}}}+\frac{H_{0}(Dv)}{\snr{Dv}}\right)\snr{Dw_{\lambda}}\chi_{\{w\not = w_{\lambda}\}} \, dx\nonumber \\
&\le &c\kappa \mint_{B_{r/2}}H_{0}(Dv) \, dx +\frac{c}{\kappa^{q-1}}\mint_{B_{r/2}}H_{0}(Dw_{\lambda})\chi_{\{w\not =w_{\lambda}\}} \, dx\nonumber \\ &\le&  \cc{c\left(\kappa+\frac{1}{\kappa^{q-1}M^{\sigma_{g}}}\right) \mint_{B_{r}}H_{0}(Dv) \, dx}\;,
\end{eqnarray}with \cc{$c\equiv c(n,N,L,p,q,\tilde c_1)$}. From estimates \eqref{harg8}-\eqref{harg12} we get
$$
\cc{\mint_{B_{r/2}}\mathcal{V}^{2}_{0}(x) \chi_{\{w=w_{\lambda}\}}\le c\left\{\kappa+ \varepsilon^{t}M+\frac{1}{\kappa^{q-1}M^{\sigma_{g}}} \right\}\mint_{B_{r}}H_{0}(Dv) \, dx}\;,%:=cS(\kappa,\varepsilon,M)\mint_{B_{r}}H_{0}(Dv) \, dx\;,
$$
with \cc{$c\equiv c(n,N,\nu,L,p,q,\tilde c_1, c_2)$}. Starting from the last inequality, the rest of the proof goes exactly as the one for Lemma \ref{int}, after \rif{har13}. 
\end{proof}
Finally, an elementary Young type inequality.
\begin{lemma}\label{elementare} Let $H_0(\cdot)$ be the function defined in \trif{H0} . Then, whenever $\kappa\in (0,1)$ it holds that 
\eqn{Youngh}
$$
st \leq \kappa 
H_0(t) + \kappa^{-1/(p-1)}H_0^*(t)\,, \qquad \ccc{\mbox{for all}} \ \ s, t \geq 0\;.
$$ 
where
$$
H_0^*(t):=\sup_{s>0}\, \left(st-H_0(s)\right)\,, \qquad \ccc{\mbox{for all}} \ \  t \geq 0
$$
denotes the convex conjugate function to $H_0(\cdot)$. 
\end{lemma}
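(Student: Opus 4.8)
The plan is to derive \eqref{Youngh} from the classical Fenchel--Young inequality for the conjugate couple $(H_0,H_0^*)$, namely $ab\le H_0(a)+H_0^*(b)$ for all $a,b\ge 0$ (which holds since $H_0$ in \eqref{H0} is a genuine, finite, convex Young function, so $H_0^{**}=H_0$), by inserting the parameter $\kappa$ through a rescaling of the arguments. The only point that is not completely routine is that $H_0(\cdot)$ is \emph{not} positively homogeneous, so the naive power-type rescaling available for $|z|^p$ cannot be used directly and has to be compensated using the standing hypotheses $p\le q$ and $a_0\ge 0$.

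\emph{Step 1 (introducing $\kappa$).} Apply Fenchel--Young with $a=t$ and $b=s/\kappa$: this gives $st/\kappa\le H_0(t)+H_0^*(s/\kappa)$, and multiplying through by $\kappa\in(0,1)$ yields $st\le \kappa H_0(t)+\kappa\, H_0^*(s/\kappa)$.

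\emph{Step 2 (scaling of the conjugate; the main point).} I would establish that
\[
H_0^*(\mu s)\le \mu^{\,p/(p-1)}H_0^*(s)\qquad\mbox{for every}\ \mu\ge 1 .
\]
This is the dual reformulation of the elementary fact that $t\mapsto H_0(t)/t^p=1+a_0t^{q-p}$ is non-decreasing (here $q\ge p$, $a_0\ge 0$), i.e. that $s\mapsto H_0^*(s)/s^{p/(p-1)}$ is non-increasing. A self-contained way to see it: in $H_0^*(\mu s)=\sup_{t\ge 0}\bigl(\mu s t-H_0(t)\bigr)$ substitute $t=\mu^{1/(p-1)}\tau$; then $\mu s t=\mu^{p/(p-1)}s\tau$ (since $1+\tfrac1{p-1}=\tfrac p{p-1}$), while $H_0(\mu^{1/(p-1)}\tau)=\mu^{p/(p-1)}\tau^p+a_0\mu^{q/(p-1)}\tau^q\ge \mu^{p/(p-1)}H_0(\tau)$ because $\mu^{q/(p-1)}\ge \mu^{p/(p-1)}$ for $\mu\ge 1$. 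Hence $\mu s t-H_0(t)\le \mu^{p/(p-1)}\bigl(s\tau-H_0(\tau)\bigr)\le \mu^{p/(p-1)}H_0^*(s)$, and taking the supremum over $t$ (equivalently over $\tau$) gives the claim.

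\emph{Step 3 (conclusion).} Use Step 2 with $\mu=1/\kappa>1$ inside Step 1:
\[
st\le \kappa H_0(t)+\kappa\,\kappa^{-p/(p-1)}H_0^*(s)=\kappa H_0(t)+\kappa^{-1/(p-1)}H_0^*(s),
\]
since $1-\tfrac{p}{p-1}=-\tfrac1{p-1}$; this is precisely \eqref{Youngh}. I do not expect a genuine obstacle here: the whole difficulty, if any, is the inhomogeneity of $H_0$, which is handled by the inhomogeneous substitution in Step 2. (If one only needs the weaker bound $st\le \kappa H_0(t)+c(p,q,\kappa)H_0^*(s)$ with $c(p,q,\kappa)\to\infty$ as $\kappa\to0$, it suffices to invoke the $\Delta_2$-property of $H_0$ and of $H_0^*$, recorded for the related functions $H^{\pm}_{B_{\varrho}}$ in Remark \ref{h-}.)
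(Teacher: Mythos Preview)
Your proof is correct and follows essentially the same route as the paper: both arguments hinge on the scaling bound $H_0^*(\mu s)\le \mu^{p/(p-1)}H_0^*(s)$ for $\mu\ge 1$, proved via the identical substitution $t=\mu^{1/(p-1)}\tau$ together with $H_0(\mu^{1/(p-1)}\tau)\ge \mu^{p/(p-1)}H_0(\tau)$, and then combine this with Fenchel--Young. The only cosmetic difference is that the paper writes $st=(\kappa^{1/p}t)(\kappa^{-1/p}s)$ and bounds $H_0(\kappa^{1/p}t)\le\kappa H_0(t)$ and $H_0^*(\kappa^{-1/p}s)\le\kappa^{-1/(p-1)}H_0^*(s)$ separately, whereas you load the entire factor onto the conjugate side --- your version is marginally more direct (and, incidentally, the displayed inequality in the statement should read $H_0^*(s)$ rather than $H_0^*(t)$, as your derivation and the application in \eqref{you11} make clear).
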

\begin{proof}
Notice that, for \cc{$A\geq 1$}, as it is $q\geq p$, we have 
\begin{eqnarray*}
\cc{H_0^*(At)}&=& \sup_{s>0}\, \cc{\left(sAt-H_0(s)\right) = A^{\frac{p}{p-1}} 
\sup_{s>0}\, \left(sA^{-\frac{1}{p-1}} t-A^{-\frac{p}{p-1}}H_0(s)\right)}\\
&\leq & \cc{A^{\frac{p}{p-1}}\sup_{s>0}\, \left(sA^{-\frac{1}{p-1}} t-H_0(sA^{-\frac{1}{p-1}})\right)=
A^{\frac{p}{p-1}}H_0^*(t)}\;.
\end{eqnarray*}
Therefore, we find, for $\kappa \in (0,1)$
$$
st \leq H_0\left(\kappa^{1/p} t\right)+H_0^*\left(t/\kappa^{1/p}\right) \leq \kappa 
H_0(t) + \kappa^{-1/(p-1)}H_0^*(t)
$$
that is, \rif{Youngh}. 
\end{proof}

\section{Proof of Theorem \ref{main1}}\label{mainproof}
 In the following, $u \in W^{1,H}_{\rm{loc}}(\Omega,\SN)$ \cc{is as in the statement of Theorem \ref{main1}}. We start recalling Lemma \ref{C1}, according to which there exists $\delta_g\equiv \delta_g(\data)>0$ such that 
$H(\cdot, Du)\in L^{1+\delta_g}_{\loc}(\Omega)$ holds, i.e., \rif{maggioreiniziale} is proved. 
For the proof of Theorem \ref{main1}, we first treat the case when $p(1+\delta_{g})\le n$, and then we describe how to get the result in the remaining one $p(1+\delta_{g})>n$. The proof now goes in six steps. The first three are devoted to the proof of the partial H\"older continuity of a constrained \cd{local} minimizer of \eqref{genF}; in particular, in the third step we describe the regular and the singular sets. In the fourth step we exploit this continuity to move to a single chart. Step five is devoted to show partial H\"older continuity for the gradient in the regular set. In the final step we briefly mention how to treat the case $p(1+\delta_{g})>n$.\\
\emph{Step 1: Freezing.}\label{primasec}\\
Let $B_{r}=B_{r}(x_{0})$ be any ball such that $B_{2r}\Subset \Omega$ and $r \leq 1/2$; more in general, every ball \ccc{$B$ considered in the rest of the proof will have} radius $r(B)\leq 1/2$. We assume that the smallness condition
\begin{flalign}\label{smallfull}
\mint_{B_{2r}(x_0)}H(x, Du) \, dx< H^{-}_{B_{2r}(x_0)}\left(\frac{\varepsilon}{2r}\right)\;,
\end{flalign}
holds for some $\eps \in (0,1)$ which is going to be chosen in due course of the proof.
Let $v \in W^{1,H}_{u}(\brx, \SN)$ be a solution to the frozen Dirichlet problem
$$
v \mapsto  \min_{w \in W^{1,H}_{u}(B_{r},\SN)}\int_{B_{r}}F\left(x,(u)_{B_{r}},Dw\right) \, dx\;.
$$
This functional satisfies the same growth assumptions (in particular \rif{assF}$_1$) of the original one minimized by $u$ and therefore Lemma \ref{C1} applies, giving 
\eqn{maggiore-original-v}
$$
\left(\mint_{B_{r/2}}[H(x,Dv)]^{1+\delta_{g}} \, dx\right)^{1/(1+\delta_{g})}\le \tilde c_1\mint_{B_{r}}H(x, Dv) \, dx\;,
$$
where the exponent $\delta_g\equiv \delta_g(\data)>0$ is the same one appearing in \rif{maggiore-original} and $\tilde c_1\equiv \tilde c_1(\data)$. Taking into account the content of Section \ref{eldp}, and in particular \rif{EL1}, $v$ solves the Euler-Lagrange equation
\begin{flalign}\label{ELfz}
\mint_{\brx}\partial F\left(x,(u)_{\brx},Dv\right) \cdot D\varphi \, dx=\mint_{\brx}\tilde{ F}'\left(x,(u)_{\brx},Dv\right)\snr{Dv}(v\cdot \varphi) \, dx\;,
\end{flalign}
which is valid for any $\varphi \in (W^{1,H}_{0}\cap L^{\infty})(\brx,\RN)$. Moreover, \cc{\rif{pre} becomes}
\begin{flalign}\label{042}
&\snr{V_{p}(z_{2})-V_{p}(z_{1})}^{2}+a(x)\snr{V_{q}(z_{2})-V_{q}(z_{1})}^{2}
+\partial F\left(x,(u)_{\brx},z_1\right)\cdot (z_{2}-z_{1})\nonumber \\
&\quad \quad \quad \le c\left[F\left(x,(u)_{\brx}, z_{2}\right)-F\left(x,(u)_{\brx},z_{1}\right)\right]
\end{flalign}
\cc{which holds} for any choice of $z_{1},z_{2}\in \mathbb{R}^{N\times n}$ and $x \in \Omega$, for a constant \cc{$c\equiv c(n,N,\nu,p,q)$}, 
see for instance \cite[(90)]{BCM3}. The map $w=u-v\in (W^{1,H}_{0}\cap L^{\infty})(\brx,\RN)$ is an admissible test function in \rif{ELfz}, therefore we have
\begin{flalign}\label{043}
&\mint_{\brx}\left(\snr{V_{p}(Du)-V_{p}(Dv)}^{2}+a(x)\snr{V_{q}(Du)-V_{q}(Dv)}^{2}\right) \, dx\nonumber \\
&\quad \quad \quad \stackleq{042} c\mint_{\brx}\left[F\left(x,(u)_{\brx},Du\right)-F\left(x,(u)_{\brx},Dv\right)\right] \, dx\nonumber-\mint_{\brx} \partial F\left(x,(u)_{\brx},Dv\right) \cdot\left(Du-Dv\right) \, dx\nonumber\\
&\quad \quad \quad \stackrel{\rif{ELfz}}{=} c\mint_{\brx}\left[F\left(x,(u)_{\brx},Du\right)-F\left(x,(u)_{\brx},Dv\right)\right] \, dx\nonumber -\mint_{\brx}\tilde{ F}'\left(x,(u)_{\brx},Dv\right)\snr{Dv}v\cdot(u-v) \, dx\nonumber \\
&\quad \quad \quad \ =c\mint_{\brx}\left[F\left(x,(u)_{\brx},Du\right)-F(x,u,Du) \right]\, dx+c\mint_{\brx}\left[ F(x,u,Du)-F(x,v,Dv)\right] \, dx\nonumber \\
&\qquad \quad \qquad+c \mint_{\brx}\left[F(x,v,Dv)-F\left(x,(v)_{\brx},Dv\right)\right] \, dx+c \mint_{\brx}\left[F\left(x,(v)_{\brx},Dv\right)-F\left(x,(u)_{\brx},Dv\right)\right] \, dx\nonumber \\
&\qquad \quad \qquad -\mint_{\brx}\tilde{ F}'\left(x,(u)_{\brx},Dv\right)\snr{Dv}v\cdot(u-v) \, dx\notag \\ &\quad \quad \quad \ =: \mathrm{(I)}+\mathrm{(II)}+\mathrm{(III)}+\mathrm{(IV)}+\mathrm{(V)} \;,
\end{flalign}
where \cc{$c\equiv c(n,N,\nu,p,q)$}. 
%Before estimating the five terms resulting from \rif{043}, we shall make a few remarks to stress that all the manipulations we are going to make are perfectly legal.
Before starting working on terms $\mathrm{(I)}$-$\mathrm{(V)}$ in \rif{043}, let us estimate \cd{some quantities} which will be recurrent in the forthcoming computations. 
First, notice that the minimality of $v$ and $\rif{assF}_{1}$ yield
\begin{flalign}\label{046}
\nu \mint_{\brx}H(x,Dv)\, dx\le \mint_{\brx}F\left(x,(u)_{\brx},Dv\right) \, dx \le \mint_{\brx}F\left(x,(u)_{\brx},Du\right) \, dx\le L\mint_{\brx}H(x, Du) \, dx\;.
\end{flalign}
Lemma \ref{C2} gives
\eqn{har6fre}
$$
\left(\mint_{B_{r}}[H(x,Dv)]^{1+\sigma_{g}} \, dx\right)^{1/(1+\sigma_{g})}\le c\left(\mint_{B_{r}}[H(x,Du)]^{1+\sigma_{g}} \, dx\right)^{1/(1+\sigma_{g})}\stackleq{maggiorefa} c\mint_{B_{2r}}H(x,Du) \, dx\;, 
$$
where $0<\sigma_g \equiv \sigma_g(\data)< \delta_g$. 
We are next going to use the function $H^{-}_{B_{2r}}(t):= t^p + \ai(B_{2r})t^q$. By Jensen's inequality (recall that $\omega(\cdot)$ in \rif{omegabeta} is a concave function, while $H^{-}_{B_{2r}}(\cdot)$ is convex), Remark \ref{h-} and the smallness condition \rif{smallfull}, we get
\begin{eqnarray}\label{omegau}
\mint_{B_{r}}\omega\left(\snr{u-(u)_{B_{r}}}\right) \, dx &\le &\omega\left(r\mint_{B_{r}}\left |\frac{u-(u)_{B_{r}}}{r} \right | \, dx\right)\nonumber \\
&=&\omega \left[r\left(H^{-}_{B_{2r}}\right)^{-1}\circ \left(H^{-}_{B_{2r}}\right)\left(\mint_{B_{r}}\left | \frac{u-(u)_{B_{r}}}{r}\right | \, dx\right)\right]\nonumber\\
&\le &\omega \left[r\left(H^{-}_{B_{2r}}\right)^{-1}\left(\mint_{B_{r}}H^{-}_{B_{2r}}\left(\frac{u-(u)_{B_{r}}}{r}\right) \, dx\right)\right]\nonumber \\
&\le &\omega \left[r\left(H^{-}_{B_{2r}}\right)^{-1}\left(\mint_{B_{r}}H\left(x, \frac{u-(u)_{B_{r}}}{r}\right) \, dx\right)\right]\nonumber \\
&\stackleq{poinc}& c\omega \left[r\left(H^{-}_{B_{2r}}\right)^{-1}\left(c\mint_{B_{2r}}H(x,Du) \, dx\right)\right]\notag  \\
&\stackrel{\rif{homh},\rif{smallfull}}{\leq} & c\omega\left[r\left(H^{-}_{B_{2r}}\right)^{-1}\circ H^{-}_{B_{2r}}\left(\frac{\varepsilon}{2r}\right)\right]\le c\varepsilon^{\beta}\;,
\end{eqnarray}with \ccc{$c\equiv c(\data,\beta)$}. Similarly, we have 
\begin{eqnarray}\label{omegav}
\mint_{B_{r}}\omega\left(\snr{v-(v)_{B_{r}}}\right) \, dx
&\leq &c\omega \left[r\left(H^{-}_{B_{2r}}\right)^{-1}\left(\mint_{B_{r}}H(x,Dv) \, dx\right)\right]\notag \\ &\stackleq{046}  & c\omega \left[r\left(H^{-}_{B_{2r}}\right)^{-1}\left(\mint_{B_{2r}}H(x,Du) \, dx\right)\right]\stackleq{omegau} c\varepsilon^{\beta}\;,
\end{eqnarray}with \ccc{$c\equiv c(\data,\beta)$}. In a totally similar way, in particular again using Lemma \ref{poinclemma} and repeatedly the content of Remark \ref{h-}, we get 
\begin{eqnarray}\label{omegauv}
\notag \omega\left(\snr{(u)_{B_{r}}-(v)_{B_{r}}}\right)& \le &\omega\left(\mint_{B_{r}}\left |u-v\right | \, dx\right)\\
&\leq &\omega\left[r\left(H^{-}_{B_{2r}}\right)^{-1}\circ \left(H^{-}_{B_{2r}}\right)\left(\mint_{B_{r}}\left | \frac{u-v}{r}\right | \, dx\right)\right]\nonumber \\
&\le &c\omega\left[r\left(H^{-}_{B_{2r}}\right)^{-1}\left(\mint_{B_{r}}H(x,Du-Dv) \, dx\right)\right]\nonumber \\
&\le &c\omega\left[r\left(H^{-}_{B_{2r}}\right)^{-1}\left(\mint_{B_{r}}\left[H(x,Du)+H(x,Dv)\right] \, dx\right)\right]\nonumber \\
&\stackleq{046}  &c\omega\left[r\left(H^{-}_{B_{2r}}\right)^{-1}\left(\mint_{B_{2r}}H(x,Du) \, dx\right)\right]\stackleq{omegau} c\varepsilon^{\beta}\;,
\end{eqnarray}again with \ccc{$c\equiv c(\data,\beta)$}. We can now start estimating the terms $\mathrm{(I)}$-$\mathrm{(V)}$ in \rif{043}; we have
\begin{eqnarray}\label{049}
\snr{\mathrm{(I)}}&\le &c\mint_{\brx}\omega\left(\snr{u-(u)_{\brx}}\right)H(x, Du) \, dx\nonumber \\
&\le &c\left(\mint_{\brx}\omega\left(\snr{u-(u)_{\brx}}\right)^{\frac{1+\delta_{g}}{\delta_{g}}} \, dx\right)^{\frac{\delta_{g}}{1+\delta_{g}}}\left(\mint_{\brx}[H(x,Du)]^{1+\delta_{g}} \, dx\right)^{\frac{1}{1+\delta_{g}}}\nonumber\\
&\stackleq{maggiore-original} &c\left(\mint_{\brx}\omega\left(\snr{u-(u)_{\brx}}\right)\, dx\right)^{\frac{\delta_{g}}{1+\delta_{g}}}
\left(\mint_{B_{2r}}H(x, Du) \, dx\right)\notag \\ &\stackleq{omegau} & \ccc{c(\data,\beta)}\varepsilon^{\frac{\beta\delta_{g}}{1+\delta_{g}}}\mint_{B_{2r}}H(x,Du) \, dx\;.
\end{eqnarray}
By minimality we see that
\begin{flalign}\label{050}
\mint_{\brx}F(x,u,Du) \, dx\le \mint_{\brx}F(x,v,Dv) \, dx\Longrightarrow \mathrm{(II)}\le 0\;. 
\end{flalign}
As for $(\mathrm{III})$, we have 
\begin{eqnarray}\label{051}
\snr{(\mathrm{III})}&\le &c\mint_{\brx}\omega\left(\snr{v-(v)_{\brx}}\right)H(x,Dv) \, dx\nonumber \\
&\le &c\left(\mint_{\brx}\omega\left(\snr{v-(v)_{\brx}}\right)^{\frac{1+\sigma_{g}}{\sigma_{g}}} \, dx\right)^{\frac{\sigma_{g}}{1+\sigma_{g}}}\left(\mint_{\brx}[H(x,Dv)]^{1+\sigma_{g}} \, dx\right)^{\frac{1}{1+\sigma_{g}}}\nonumber\\
&\stackleq{har6fre}&c\left(\mint_{B_{r}}\omega\left(\snr{v-(v)_{\brx}}\right) \, dx\right)^{\frac{\sigma_{g}}{1+\sigma_{g}}}\mint_{B_{2r}}H(x,Du) \, dx\notag \\
&\stackleq{omegav} &\ccc{c(\data,\beta)}\varepsilon^{\frac{\beta\sigma_{g}}{1+\sigma_{g}}}\mint_{B_{2r}}H(x,Du) \, dx
\;.
\end{eqnarray}
The estimation of $\mathrm{(IV)}$ is analogous to that of $\mathrm{(III)}$, the only difference being that in this case we must use \eqref{omegauv}; we end up with
\begin{flalign}\label{052}
\snr{\mathrm{(IV)}}\le \ccc{c(\data,\beta)}\varepsilon^{\frac{\beta\sigma_{g}}{1+\sigma_{g}}}\mint_{B_{2r}}H(x, Du) \, dx\;.
\end{flalign}
Finally we look at term $\mathrm{(V)}$. Proceeding as for the previous terms, and in particular using the smallness condition \rif{smallfull} as done in the last line of display \rif{omegau}, we have
\begin{eqnarray}\label{053}
\snr{\mathrm{(V)}}&\le &c\mint_{\brx}H(x,Dv)\snr{u-v} \, dx\nonumber \\&
\le & c\left(\mint_{B_{r}}\snr{u-v}^{\frac{1+\sigma_{g}}{\sigma_{g}}}\, dx\right)^{\frac{\sigma_{g}}{1+\sigma_{g}}}\left(\mint_{B_{r}}[H(x,Dv)]^{1+\sigma_{g}} \, dx\right)^{\frac{1}{1+\sigma_{g}}}\nonumber\\&
\le & c\left(\mint_{B_{r}}\snr{u-v} \, dx\right)^{\frac{\sigma_{g}}{1+\sigma_{g}}}\left(\mint_{B_{r}}[H(x,Dv)]^{1+\sigma_{g}} \, dx\right)^{\frac{1}{1+\sigma_{g}}}\nonumber\\
&\le &c\left[r\left(H^{-}_{B_{2r}}\right)^{-1}\left(\mint_{B_{r}}H^{-}_{B_{2r}}\left(\frac{u-v}{r}\right) \, dx\right)\right]^{\frac{\sigma_{g}}{1+\sigma_{g}}}\mint_{B_{2r}}H(x,Du) \, dx\nonumber\\
&\le &c\left[r\left(H^{-}_{B_{2r}}\right)^{-1}\left(\mint_{B_{r}}H\left(x,\frac{u-v}{r}\right)\, dx\right) \right]^{\frac{\sigma_{g}}{1+\sigma_{g}}}\mint_{B_{2r}}H(x,Du) \, dx\nonumber\\
&\le &c\left[r\left(H^{-}_{B_{2r}}\right)^{-1}\left(\mint_{B_{r}}H(x,Du-Dv) \, dx\right)\right]^{\frac{\sigma_{g}}{1+\sigma_{g}}}\mint_{B_{2r}}H(x,Du) \, dx\nonumber\\
&\le &c\left[r\left(H^{-}_{B_{2r}}\right)^{-1}\left(\mint_{B_{2r}}H(x,Du) \, dx\right)\right]^{\frac{\sigma_{g}}{1+\sigma_{g}}}\mint_{B_{2r}}H(x,Du) \, dx
\notag \\ &\le& \ccc{c(\data,\beta)}\varepsilon^{\frac{\sigma_{g}}{1+\sigma_{g}}}\mint_{B_{2r}}H(x,Du) \, dx\;.
\end{eqnarray}
Connecting estimates \rif{049}-\rif{053} to \rif{043}, and recalling that \cd{$\varepsilon < 1$}, $\beta \le 1$ and $\sigma_{g}<\delta_{g}$, we conclude with
\begin{flalign}\label{uv}
\mint_{\brx}\left(\snr{V_{p}(Du)-V_{p}(Dv)}^{2}+a(x)\snr{V_{q}(Du)-V_{q}(Dv)}^{2} \right)\, dx\le c\varepsilon^{\frac{\beta\sigma_{g}}{1+\sigma_{g}}}\mint_{B_{2r}}H(x,Du)\, dx\;,
\end{flalign}
holds for \ccc{$c\equiv c(\data,\beta)$}.\\
\emph{Step 2: $\partial F(\cdot,(u)_{B_{r}},\cdot)$-harmonic approximation.}\\
We aim to show that $v$ matches the assumptions of Lemma \ref{int}, with the choice $g(x,z)\equiv \partial F(x,(u)_{B_{r}},z) $; obviously, \cd{$\trif{assF}_{1,2,3,4}$} are satisfied, as well as \cd{$\rif{assF2}_{1}$}, by the very definition of $g(\cdot)$. As for \rif{har1}, we have 
\begin{eqnarray}\label{ckhar1}
\notag E(v;B_{r}):=\mint_{B_{r}}H(x,Dv) \, dx & \stackleq{046} & \frac{L}{\nu}\mint_{B_{r}}H(x,Du) \, dx\\ &=:&\frac{L}{\nu} E(u;B_{r}) \stackrel{\rif{smallfull}}{<}   2^{n}\frac{L}{\nu}H^{-}_{B_{2r}}\left(\frac{\varepsilon}{2r}\right)< c_1H^{-}_{B_{r}}\left(\frac{\varepsilon}{r}\right) \;,
\end{eqnarray}
which is in fact \eqref{har1} with $c_1:= 2^nL/\nu$. On the other hand, the validity of \rif{maggiorefa} is stated in \rif{maggiore-original-v}. To verify \eqref{har2}, we look at the Euler-Lagrange equation \eqref{EL1} solved by $v$ on $B_{r/2}$. By $\eqref{assF}_{1}$, for any $\varphi \in W^{1,\infty}_{0}(B_{r/2},\RN)$ we have
\begin{eqnarray}\label{ckhar2}
\left |\ \mint_{B_{r/2}}\partial F\left(x,(u)_{B_{r}},Dv\right)\cdot D\varphi \, dx\  \right |&=&\left |\ \mint_{B_{r/2}}\tilde{ F}'\left(x,(u)_{B_{r}},Dv\right)\snr{Dv}(v\cdot \varphi) \, dx\  \right |\nonumber\\
&\le &c\mint_{B_{r/2}}H(x,Dv)\snr{\varphi} \, dx\nonumber \\
&\le& cr\nr{D\varphi}_{L^{\infty}(B_{r/2})}\mint_{B_{r/2}}H(x,Dv) \, dx\notag \\ &\le & cr\nr{D\varphi}_{L^{\infty}(B_{r/2})}E(v;B_{r})
\end{eqnarray}
with $c\equiv c(n,L,p,q)$. The last term in display \eqref{ckhar2} can be estimated via \rif{Youngh} 
\cd{\begin{flalign}\label{you11}
cr\nr{D\varphi}_{L^{\infty}(B_{r/2})}E(v;B_{r})\le \delta_{1}H^{-}_{B_{r}}\left(\nr{D\varphi}_{L^{\infty}(B_{r/2})}\right)+\frac{c}{\delta_{1}^{1/(p-1)}}\left(H^{-}_{B_{r}}\right)^{*}\left(rE(v;B_{r})\right)\;,
\end{flalign}}
with $c\equiv c(n,L,p,q)$ and $\delta_1 \in (0,1)$, where \cd{$\left(H^{-}_{B_{r}}\right)^{*}$} denotes the convex conjugate of \cd{$H^{-}_{B_{r}}$}. Since 
\eqn{equistar}
$$\cd{\left(\left(H^{-}_{B_{r}}\right)^{*}\right)'=\left(\left(H^{-}_{B_{r}}\right)'\right)^{-1}\;,}$$ then, for 
\eqn{sceltaep1}
$$\cd{\varepsilon<\frac{p}{c_1}=\frac{p\nu}{2^nL}\;,}$$
(recall that $\eps$ in \rif{smallfull} is chosen in due course of the proof via various size restrictions), we find
\cd{\begin{eqnarray*}
\left(\left(H^{-}_{B_{r}}\right)^{*}\right)'\left(rE(v;B_{r})\right)&\stackleq{ckhar1} &\left(\left(H^{-}_{B_{r}}\right)^{*}\right)'\left(c_1rH^{-}_{B_{r}}\left(\frac{\varepsilon}{r}\right)\right)\le \left(\left(H^{-}_{B_{r}}\right)^{*}\right)'\left(\frac{c_1}{p}\varepsilon \left(H^{-}_{B_{r}}\right)'\left(\frac{\varepsilon}{r}\right)\right)\\
&\stackleq{sceltaep1} & \left(\left(H^{-}_{B_{r}}\right)^{*}\right)'\left( \left(H^{-}_{B_{r}}\right)'\left(\frac{\varepsilon}{r}\right)\right)
\stackrel{\rif{equistar}}{=}\left(\left(H^{-}_{B_{r}}\right)'\right)^{-1}\left( \left(H^{-}_{B_{r}}\right)'\left(\frac{\varepsilon}{r}\right)\right)
= \frac{\varepsilon}{r}\;,
\end{eqnarray*}}
so we can conclude with
$$
\cd{\left(H^{-}_{B_{r}}\right)^{*}\left(rE(v;B_{r})\right)\le rE(v;B_{r})\left(\left(H^{-}_{B_{r}}\right)^{*}\right)'(rE(v;B_{r}))\le \varepsilon E(v;B_{r})\;.}
$$
In this way \rif{you11} becomes 
$$
\cd{cr\nr{D\varphi}_{L^{\infty}(B_{r/2})}E(v;B_{r})\le \delta_{1}H^{-}_{B_{r}}\left(\nr{D\varphi}_{L^{\infty}(B_{r/2})}\right)+\frac{c\eps}{\delta_{1}^{1/(p-1)}}\mint_{B_{r}}H(x,Dv)\, dx\;.}
$$
Now select $\delta_{1}=\varepsilon^{(p-1)/2}$, and define $2t:=\min \left\{p-1,1\right\}$. The last inequality used in \eqref{ckhar2} gives
\begin{flalign*}
\left |\ \mint_{B_{r/2}}\partial F\left(x,(u)_{B_{r}},Dv\right)\cdot D\varphi \, dx \  \right |\le c_2\varepsilon^{t}\mint_{B_{r}}\left[H(x,Dv)+H\left(x,\nr{D\varphi}_{L^{\infty}(B_{r/2})}\right)\right] \, dx\;,
\end{flalign*}
for some $c_{2}=c(n,L,p,q)$ which is in fact \eqref{har2}. So Lemma \ref{int} applies and yields a $\partial F(\cdot,(u)_{B_{r}},\cdot)$-harmonic map $h\in W^{1,H}_{v}(B_{r/2},\mathbb{R}^{N})$, specifically, a solution to
\eqn{byvirtue}
$$
h \mapsto \min_{w\in W^{1,H}_{v}(B_{r/2},\RN)} \int_{B_{r/2}}F\left(x,(u)_{B_{r}},Dw\right) \, dx\;,
$$
such that \rif{hhh} holds; this, together with \rif{046}, allows to get
\begin{flalign}\label{hhh22}
\mint_{B_{r/2}}\left(\snr{V_{p}(Dv)-V_{p}(Dh)}^{2}+a(x)\snr{V_{q}(Dv)-V_{q}(Dh)}^{2} \right)\, dx \le c\varepsilon^{m}\mint_{B_{r}}H(x,Du) \, dx\;,
\end{flalign}
where \ccc{$c\equiv c(\data,\beta)$} and $m=m(\data)$. Moreover, \cd{there} holds that 
\eqn{massimo22}
$$
\|h\|_{L^{\infty}(B_{r/2})} \leq \sqrt{N}\;.
$$
By virtue of \rif{byvirtue} and of the previous inequality, we are then able to apply Theorem \ref{morreydecayth}. For every $\sigma \in (0,n]$, estimate \rif{morreydecayH} reads as
\eqn{morreydecayfinal}
$$\mint_{B_{t}}H(x,Dh) \, dx \le c\left(\frac ts\right)^{-\sigma}\mint_{B_{s}}H(x,Dh) \, dx\;,$$ that holds whenever 
\cd{$B_t\subset B_s \subset B_{r/2}$} are concentric balls, and where \ccc{$c\equiv c (\data, \beta, \sigma)$}, again by virtue of \rif{massimo22}; in the following we take $\sigma <1/4$. 
With \cd{$\tau \in (0,1/2)$}, recalling \rif{elemH} and using \rif{uv} and \rif{hhh22}, we can then estimate
\begin{eqnarray*}
\mint_{B_{2\tau r}}H(x,Du) \, dx&\le &c\mint_{B_{2\tau r}}\left(\snr{V_{p}(Du)-V_{p}(Dv)}^{2}+a(x)\snr{V_{q}(Du)-V_{q}(Dv)}^{2}\right) \, dx\nonumber\\
&&\quad +c\mint_{B_{2\tau r}}\left(\snr{V_{p}(Dv)-V_{p}(Dh)}^{2}+a(x)\snr{V_{q}(Dv)-V_{q}(Dh)}^{2} \right)\, dx\\ && \quad +c\mint_{B_{2\tau r}}H(x,Dh) \, dx \nonumber \\
&\le &c\tau^{-n}\mint_{B_{r}}\left(\snr{V_{p}(Du)-V_{p}(Dv)}^{2}+a(x)\snr{V_{q}(Du)-V_{q}(Dv)}^{2}\right) \, dx \nonumber\\
&&\quad +c\tau^{-n}\mint_{B_{r/2}}\left(\snr{V_{p}(Dv)-V_{p}(Dh)}^{2}+a(x)\snr{V_{q}(Dv)-V_{q}(Dh)}^{2}\right) \, dx\notag \\ && \quad +c\mint_{B_{2\tau r}}H(x,Dh) \, dx\nonumber \\
&\leq &c\left(\tau^{-n}\varepsilon^{\frac{\beta\sigma_{g}}{1+\sigma_{g}}}+\tau^{-n}\varepsilon^{m}+\tau^{-\sigma}\right)\mint_{B_{2r}}H(x,Du) \, dx\;,
\end{eqnarray*}
where \ccc{$c\equiv c (\data, \beta, \sigma)$}. Recalling the notation adopted in \eqref{ckhar1}, the conclusion of the last display reads as
\begin{eqnarray}\label{est1}
\notag E\left(u;B_{2\tau r}\right)&\le & \ccc{c(\data, \beta, \sigma)}\left(\tau^{-n}\varepsilon^{\frac{\beta\sigma_{g}}{1+\sigma_{g}}}+\tau^{-n}\varepsilon^{m}+\tau^{-\sigma}\right)E(u;B_{2r})\\
&= & \ccc{c(\data,\beta, \sigma)}\tau^{\sigma}\left(\tau^{-n-\sigma}\varepsilon^{\frac{\beta\sigma_{g}}{1+\sigma_{g}}}+\tau^{-n-\sigma}\varepsilon^{m}+\tau^{-2\sigma}\right)E(u;B_{2r})
\;. 
\end{eqnarray} 
We can now determine \ccc{$\tau \equiv \tau (\data, \beta,\sigma)\in (0,1)$} such that 
\eqn{sceltacostanti}
$$
c(\data, \sigma)\tau^{\sigma}\leq \frac 16 
$$
It is now time to choose the number $\eps$ coming from \rif{smallfull}. 
Recalling \rif{sceltaep1}, we now further reduce $\eps$ to have 
\eqn{sceltaeppi}
$$\cd{\varepsilon< \min\left\{\frac{p\nu}{2^{n}L},\tau^{\frac{(n-\sigma)(1+\sigma_{g})}{\beta\sigma_{g}}},\tau^{\frac{n-\sigma}{m}}\right\}}$$ and notice that this fixes the dependence \ccc{$\varepsilon\equiv \varepsilon(\data,\beta, \sigma)$}. By using \rif{sceltacostanti} and \rif{sceltaeppi} in  \eqref{est1}, this last inequality reads as
\eqn{est5pre}
$$
\cc{E\left(u;B_{2\tau r}\right)\le \tau^{-2\sigma}E(u;B_{2r})}
$$
that is, recalling the definition in \rif{ckhar1}
\begin{flalign}\label{est5}
\cc{\int_{B_{2\tau r}}H(x, Du) \, dx\le  \tau^{n-2\sigma}\int_{B_{2r}}H(x, Du) \, dx}\;. 
\end{flalign}
Next, we observe that
$$
\cc{E\left(u;B_{2\tau r}\right)\stackleq{est5pre} \tau^{-2\sigma}E(u;B_{2r})\stackrel{\eqref{smallfull}}{<} \tau^{1-2\sigma}\tau^{-1}H^{-}_{B_{2r}}\left(\frac{\varepsilon}{2r}\right)\le\tau^{-1} H^{-}_{B_{2\tau r}}\left(\frac{\varepsilon}{2 r}\right)\le H^{-}_{B_{2\tau r}}\left(\frac{\varepsilon}{2\tau r}\right)}\;,
$$
and we conclude with 
$$
E\left(u;B_{2\tau r}\right)<H^{-}_{B_{2\tau r}}\left(\frac{\varepsilon}{2\tau r}\right)\;.
$$
We have therefore proved that, for the choice of \ccc{$\tau\equiv \tau(\data,\beta, \sigma)$} and \ccc{$\eps\equiv\eps(\data,\beta, \sigma)$} made in \rif{sceltacostanti} and \rif{sceltaeppi}, respectively, if the smallness condition \rif{smallfull} is satisfied on the ball $B_{2r}$ it is also satisfied on the ball $B_{2\tau r}$. We can therefore repeat the whole argument developed after \rif{smallfull} starting from the ball $B_{2\tau r}$ instead of $B_{2r}$, thereby arriving at the analog of \rif{est5pre}, that is $E(u;B_{2\tau^2 r})\le \tau^{-2\sigma}E\left(u;B_{2\tau r}\right)$. This argument can obviously be iterated on the family of shrinking balls $\{B_{\tau^j r}\}$, thereby concluding that, for every $j \in \en$, it holds that 
$$
E\left(u;B_{2\tau^j r}\right)<H^{-}_{B_{2\tau^j r}}\left(\frac{\varepsilon}{2\tau^j r}\right)
$$
and
$$
\int_{B_{2\tau^j r}}H(x, Du) \, dx< \tau^{(n-2\sigma)j}\int_{B_{2r}}H(x, Du) \, dx\;.$$
In turn, a standard interpolation argument leads to conclude that
\eqn{decayfinale} 
$$\int_{B_{t}(x_0)}H(x,Du) \, dx \le c\left(\frac t{r}\right)^{n-2\sigma}\int_{B_{2r}(x_0)}H(x,Du) \, dx\;,\qquad \forall \ t  \leq 2r\;,$$
where \ccc{$c\equiv c(\data, \beta,\sigma)$}. Notice that the above inequality has been derived for $4\sigma <1$ but it is then easily seen to hold whenever $\sigma \in (0,1)$. Going back to \rif{smallfull}, we observe that the two functions 
$$
x_0 \mapsto \mint_{B_{2r}(x_0)}H(x, Du) \, dx\qquad \mbox{and}\qquad x_0\mapsto H^{-}_{B_{2r}(x_0)}\left(\frac{\varepsilon}{2r}\right)
$$
are continuous. This is a consequence of the absolute continuity of the integral for the former, and of Remark \ref{h-} for the latter. We conclude that, with $\sigma$ being fixed, if \rif{smallfull} is satisfied at a point $x_0\in \Omega$, then there exists ball $B_{4r_{x_0}}(x_0)$ such that 
\eqn{piccolo-raggio}
$$
y \in B_{4r_{x_0}}(x_0) \Longrightarrow \mint_{B_{2r}(y)}H(x, Du) \, dx< H^{-}_{B_{2r}(y)}\left(\frac{\varepsilon}{2r}\right)\;.
$$
We then conclude that \rif{decayfinale} holds (with $y$ replacing $x_0$), and with the same constant \ccc{$c\equiv c (\data,\beta, \sigma)$}, whenever 
$y \in B_{4r_{x_0}}(x_0)$. By a standard characterization of H\"older continuity it then follows that $u \in C^{0, \gamma}\left(B_{r_{x_0}}(x_0)\right)$ with $\gamma=1-2\sigma/p$ (see Remark \ref{camplocal} below). As we can choose $\sigma\in (0,1/4)$ arbitrarily, we have finally proved the following (we can switch from $2r$ to $r$ now):
\begin{proposition}\label{main1p} Let $u \in W^{1,1}_{\loc}(\Omega,\SN)$ be a \cd{constrained} local minimizer of the functional $\mathcal F$ in \trif{genF},  under the assumptions \trif{assF}-\trif{mainbound}.  Assume that $p(1+\delta_g)\leq n$. Then, for every positive exponent $\gamma<1$, there exists another positive number  \ccc{$\eps_{\gamma}\equiv \eps_{\gamma}(\data,\beta, \gamma)$} such that if $B_{r}(x_0)\Subset \Omega$, $r \leq 1$, and 
\eqn{carepsdopo}
$$
\left[H^{-}_{\br}\left(\frac{\eps_{\gamma}}{r}\right)\right]^{-1}\mint_{\br}H(x, Du) \, dx< 1\;,
$$
then $u$ is of class $C^{0,\gamma}$ in a neighbourhood of $x_0$.
\end{proposition}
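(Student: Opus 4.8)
The plan is to recognize that hypothesis \rif{carepsdopo} is, after relabelling, exactly the smallness condition \rif{smallfull} that powers the freezing and harmonic approximation of Steps~1--2, and then to read off Hölder continuity from the intrinsic excess decay \rif{decayfinale} via the classical integral (Campanato--Meyers) characterization of Hölder continuity.

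First I would fix the target exponent $\gamma<1$ and set $\sigma:=p(1-\gamma)/2$, reducing it if necessary so that $4\sigma<1$; then the Morrey decay of Theorem \ref{morreydecayth} is available with this $\sigma$. This choice determines, through \rif{sceltacostanti} and \rif{sceltaeppi}, a threshold $\varepsilon\equiv\varepsilon(\data,\beta,\sigma)$, and I would simply put $\varepsilon_\gamma:=\varepsilon(\data,\beta,\sigma)$, which depends only on $\data,\beta,\gamma$ as claimed. With this identification, \rif{carepsdopo} written on $B_r(x_0)$ is precisely \rif{smallfull} written on a ball of radius $r/2$ about $x_0$ (after the harmless relabelling $2r\rightsquigarrow r$), so the smallness hypothesis needed to launch the comparison argument holds at $x_0$.

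Next I would invoke the full machinery of Steps~1--2: the comparison estimate \rif{uv} between $u$ and the frozen minimizer $v$, the intrinsic harmonic approximation Lemma \ref{int} producing the $\partial F(\cdot,(u)_{B_r},\cdot)$-harmonic map $h$ with \rif{hhh22}, and the Morrey decay \rif{morreydecayfinal} for $h$; combining these as in \rif{est1} yields the one-step excess decay \rif{est5pre}. The delicate point, and the one I expect to be the main obstacle, is the self-improvement of smallness: one must verify that \rif{smallfull} on $B_{2r}$ forces \rif{smallfull} on the shrunken ball $B_{2\tau r}$. This is the short computation right after \rif{est5}, and it rests on the monotonicity of $\varrho\mapsto H^-_{B_\varrho(x_0)}(t)$ from Remark \ref{h-} together with the scaling $H^-_{B_{2\tau r}}(\varepsilon/(2\tau r))\ge\tau^{-1}H^-_{B_{2\tau r}}(\varepsilon/(2r))$; the restrictions $\varepsilon<\tau^{(n-\sigma)(1+\sigma_{g})/(\beta\sigma_{g})}$ etc. in \rif{sceltaeppi} are exactly what closes the iteration. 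Granting this, one iterates over the family $\{B_{2\tau^{j}r}(x_0)\}_{j\in\en}$, and a standard interpolation between these geometric scales upgrades the decay to the full estimate \rif{decayfinale}, valid for all $t\le 2r$.

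Finally I would spread the information over a neighbourhood of $x_0$: the maps $y\mapsto\mint_{B_{2r}(y)}H(x,Du)\,dx$ (by absolute continuity of the integral) and $y\mapsto H^-_{B_{2r}(y)}(\varepsilon/(2r))$ (by continuity of $a(\cdot)$, Remark \ref{h-}) are continuous, so \rif{smallfull} persists on $B_{2r}(y)$ for all $y$ in a smaller ball $B_{4r_{x_0}}(x_0)$, whence \rif{decayfinale} holds with the same constant for every such $y$, as in \rif{piccolo-raggio}. In particular $\mint_{B_t(y)}H(x,Du)\,dx\lesssim t^{-2\sigma}$ gives $\mint_{B_t(y)}\snr{Du}^{p}\,dx\lesssim t^{-2\sigma}$, i.e.\ $\snr{Du}^{p}$ lies in the Campanato--Morrey class of exponent $n-2\sigma$ near $x_0$; the integral characterization of Hölder continuity (Remark \ref{camplocal}) then yields $u\in C^{0,1-2\sigma/p}$ on $B_{r_{x_0}}(x_0)$, that is $u\in C^{0,\gamma}$ near $x_0$. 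Since $\gamma<1$ was arbitrary, the proposition follows.
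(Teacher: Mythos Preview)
Your proposal is correct and follows essentially the same approach as the paper: Proposition~\ref{main1p} is precisely the summary of Steps~1--2 of Section~\ref{mainproof}, and your outline---fixing $\sigma=p(1-\gamma)/2$, identifying \rif{carepsdopo} with \rif{smallfull} via the relabelling $2r\rightsquigarrow r$, running the freezing/harmonic approximation to reach \rif{est5pre}, iterating via the self-improvement of smallness, spreading by continuity as in \rif{piccolo-raggio}, and concluding by Campanato--Morrey---matches the paper's argument step for step.
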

\begin{remark} \label{camplocal}
\emph{Let us make the last argument somehow more quantitative. With $\gamma \in (0,1)$ being fixed, let $\tilde r(x_0)$ be the largest radius, such that the smallness condition \rif{carepsdopo} is satisfied with $r \equiv \tilde r(x_0)$ (together with $B_{2\tilde r(x_0)}\Subset \Omega$). Then we have that 
\begin{eqnarray*}
\int_{B_{t}(x_0)}H(x, Du) \, dx &\le & c\left(\frac t{\tilde r(x_0)}\right)^{n-p+p\gamma}\int_{B_{2\tilde r(x_0)}(x_0)}H(x,Du) \, dx\\ &\stackleq{caccine2}& ct^{n-p+p\gamma}\left[[\tilde r(x_0)]^{p-p\gamma}\mint_{B_{4\tilde r(x_0)}(x_0)}\left( \frac{|u|^p}{[\tilde r(x_0)]^p} + \|a\|_{L^{\infty}}\frac{|u|^q}{[\tilde r(x_0)]^q} \right)dx\right]\;. 
\end{eqnarray*}
By using Poincarè's inequality, we get 
$$
\cd{\mint_{B_{t}(x_0)}|u-(u)_{B_t}|^p \, dx  \leq \frac{\ccc{c(\data,\nr{a}_{L^{\infty}},\beta, \gamma)}}{[\tilde r(x_0)]^{q-p+p\gamma}} t^{p\gamma}\;}.
$$
This estimate remains stable whenever $x_0$ is replaced by $y \in B_{4r_{x_0}}(x_0)$ as in \trif{piccolo-raggio} and therefore, by a standard integral characherization of H\"older continuity, for all $\gamma\in (0,1)$ it follows that
\eqn{ricopri}
$$
[u]_{0,\gamma;B_{r_{x_0}}(x_0)} \leq \frac{\ccc{c(\data,\nr{a}_{L^{\infty}},\beta, \gamma)}}{[\tilde r(x_0)]^{q/p-1+\gamma}}\;.
$$
}
\end{remark}

\noindent \emph{Step 3: Dimension of the singular set; the first estimate and proof of \trif{careps2}.} \\
Following a standard terminology, we denote by 
$$
\Omega_u := \left\{x_0 \in \Omega \, \colon \, \mbox{there exists a ball $B_{r_{x_0}}(x_0)\subset \Omega$ such that $u \in C^{0, \gamma}\left(B_{r_{x_0}}(x_0)\right)$ for some $\gamma \in (0,1)$} \right\}\;.
$$
This set is open by definition and we denote $\Sigma_{u}:=\Omega\setminus \Omega_{u}$, the so-called singular set. Indeed, we shall later on prove that $Du$ is locally H\"older continuous on $\Omega_u$ and this justifies the terminology used here with respect with the one in the statement of Theorem \ref{main1}. Let us now prove \rif{careps2}; call $\Sigma$ the set in the right-hand side of \rif{careps2}. The inclusion $\Sigma_u\subset \Sigma$ is obvious in view of \cd{Proposition \ref{main1p}}. On the other hand, take, by contradiction $x_0\in \Sigma\setminus \Sigma_u$. Then there exists $\gamma \in (0,1)$ such that $u$ is of class $C^{0, \gamma}\left(B_{r_{x_0}}(x_0)\right)$ for some ball $B_{r_{x_0}}(x_0)$. We then look at the Caccioppoli type inequality in \rif{caccine2dopo}; this implies that, for $\varrho \leq r_{x_0}/2$
$$
\cd{\mint_{B_{\rr}(x_0)}H(x, Du) \, dx\le c\mint_{B_{2\rr}(x_0)}H^{-}_{B_{2\rr}(x_0)}\left(\frac{u-(u)_{B_{2\rr}(x_0)}}{2\rr} \right) \, dx \leq cH^{-}_{B_{\rr}(x_0)}\left(\rr^{\gamma-1}\right)}  
$$
and
\eqn{takesmall}
$$
\left[H^{-}_{B_{\varrho}(x_0)}\left(\frac{1}{\rr}\right)\right]^{-1}\mint_{B_{\varrho}(x_0)}H(x, Du) \, dx \leq
c\varrho^{\gamma p} \;, 
$$
for \ccc{$c\equiv c (\data,\beta, [u]_{0,\gamma})$} (here we have used that $H^{-}_{B_{2\rr}}\leq H^{-}_{B_{\rr}}$). Letting $\rr \to 0$ in the above display implies $x_0 \not\in \Sigma$, a contradiction. This proves that $\Sigma \subset \Sigma_u$ and therefore \rif{careps2}. Next, observe that by definition we have $H^{-}_{B_{\varrho}(x_{0})}(1/\varrho)\geq \varrho^{-p}$. Combining this with Lemma \ref{C1} (which is used to assert the integrability of $[H(x,Du)]^{1+\delta_g} $) we conclude with
\begin{flalign}\label{ss}
\Sigma_{u}\subset \left\{x_{0}\in \Omega\, \colon \, \limsup_{\varrho\to 0}\, \varrho^{p(1+\delta_{g})-n}\int_{B_{\varrho}(x_{0})}[H(x,Du)]^{1+\delta_g} \, dx>0\right\}\;.
\end{flalign}
Giusti's Lemma (\cite[Proposition 2.7]{G}) then implies 
\eqn{dimensione-rid1}
$$\dim_{\mathcal{H}}(\Sigma_{u})\le n-p-p\delta_{g}\Longrightarrow \mathcal{H}^{n-p}(\Sigma_{u})=0\;.$$ Recall we are treating the case when $p(1+\delta_{g})\leq n$. In particular, we have $|\Sigma_u|=0$. Notice also that, once proved that $Du$ is locally H\"older continuous in $\Omega_u$, we shall have proved the validity of \rif{zerop}. In a totally similar way, assume also that $q(1+\delta_g)\leq n$; we observe that 
if $x_{0}\in \Sigma_u$ is such that $a(x_0)>0$, then $H^{-}_{B_{\varrho}(x_{0})}(1/\varrho)\ge \ai(B_{\varrho}(x_{0}))\varrho^{-q}>0$ for $\varrho$ sufficiently small, and we have
$$
\Sigma_{u}\cap\{a(x)>0\}\subset \left \{x_{0}\in \Omega\, \colon \,  \limsup_{\varrho\to 0}\, \varrho^{q(1+\delta_{g})-n}\int_{B_{\varrho}(x_{0})}[H(x,Du)]^{1+\delta_{g}} \, dx >0\right\}\;.
$$
Therefore, again by Giusti's Lemma, we also have 
\eqn{dimensione-rid2}
$$\dim_{\mathcal{H}}(\Sigma_{u})\le n-q-q\delta_{g}\Longrightarrow \mathcal{H}^{n-q}(\Sigma_{u}\cap\{a(x)>0\})=0\;.$$
Finally, we observe that in fact we have
\eqn{nuovaide}
$$
\Omega_u = \left\{x_0 \in \Omega \, \colon \, \mbox{there exists a ball $B_{r_{x_0}}(x_0)\subset \Omega$ such that $u \in C^{0, \gamma}\left(B_{r_{x_0}}(x_0)\right)$ for every $\gamma \in (0,1)$} \right\}\;.
$$
Indeed, call $\tilde \Omega_u$ the set in the right-hand side of the previous display; \cd{$\tilde \Omega_u \subset \Omega_u$, again by Proposition \ref{main1p}}. On the other hand, the us take $x_0 \in \Omega_u$; it follows that there exists $B_{r_{x_0}}(x_0)\subset \Omega$ with $u \in C^{0, \tilde \gamma}\left(B_{r_{x_0}}(x_0)\right)$ for some $\tilde \gamma <1$; then, fix $\gamma<1$ and determine the corresponding \ccc{$\eps_{\gamma}\equiv \eps_{\gamma} (\data,\beta, \gamma)$} according to Proposition \ref{main1p}. We can take $\varrho$ small enough in \rif{takesmall} (this time with $\gamma$ replaced by $\tilde \gamma$) in such a way that the smallness condition in \rif{carepsdopo} is satisfied. This implies that $u$ is $\gamma$-H\"older continuous in a neighbourhood of $x_0$. As $\gamma \in (0,1)$ has been chosen arbitrarily, we deduce that $x_0 \in \tilde \Omega_u$ and therefore $\Omega_u=\tilde \Omega_u$, that is, \rif{nuovaide} is completely proved.  

By \rif{nuovaide} and matching the content of Remark \ref{camplocal} (in particular, see \rif{ricopri}) with a standard covering argument, we conclude with 
\begin{proposition}\label{main1pp} Let $u \in W^{1,1}_{\loc}(\Omega,\SN)$ be a constrained local minimizer of the functional $\mathcal F$ in \trif{genF}, under the assumptions \trif{assF}-\trif{mainbound}.  Assume that $p(1+\delta_g)\leq n$. For every open subset $\Omega_0 \Subset \Omega_u$ and every $\gamma \in (0,1)$, there exist constants \ccc{$c, \tilde c\equiv c, \tilde c  (\data,\nr{a}_{L^{\infty}},\beta, \gamma, \Omega_0)$} such that 
\eqn{parthol}
$$
\mint_{B_{\varrho}}H(x,Du) \, dx \le c\varrho^{(\gamma-1)q}\qquad \mbox{and} \qquad [u]_{0, \gamma;\Omega_0}\leq \tilde c\;,
$$
with the first one that holds whenever $B_{2 \varrho} \subset \Omega_0$ with $2\varrho \leq 1$.
\end{proposition}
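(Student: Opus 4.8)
The plan is to turn the qualitative continuity recorded in \rif{nuovaide} into the two uniform estimates on $\Omega_0$; the only genuinely delicate point is a uniform lower bound, over $\overline{\Omega_0}$, for the ``good radius'' produced by Proposition \ref{main1p} and Remark \ref{camplocal}. Fix $\gamma\in(0,1)$ and the corresponding threshold $\eps_\gamma\equiv\eps_\gamma(\data,\beta,\gamma)$, and for $x_0\in\Omega_u$ let $\tilde r(x_0)>0$ be the largest radius for which the smallness condition \rif{carepsdopo} holds with $r\equiv\tilde r(x_0)$, together with $8\tilde r(x_0)\le1$ and $B_{8\tilde r(x_0)}(x_0)\Subset\Omega$ (enough room for the Caccioppoli inequality in Remark \ref{camplocal}); such a radius exists by the discussion preceding \rif{nuovaide}. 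First I would check that $x_0\mapsto\tilde r(x_0)$ is lower semicontinuous on $\Omega_u$: if $0<\rho<\tilde r(x_0)$, then the \emph{strict} inequality \rif{carepsdopo} holds at $x_0$ with some admissible radius $r\in(\rho,\tilde r(x_0)]$, and since the map $x_1\mapsto[H^{-}_{B_r(x_1)}(\eps_\gamma/r)]^{-1}\mint_{B_r(x_1)}H(x,Du)\,dx$ is continuous --- by absolute continuity of the integral and by Remark \ref{h-} for the dependence $x_1\mapsto\ai(B_r(x_1))$ --- that inequality, as well as $B_{8r}(x_1)\Subset\Omega$ and $8r\le1$, persist for $x_1$ in a neighbourhood of $x_0$, whence $\tilde r(x_1)\ge r>\rho$. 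A positive lower semicontinuous function attains a positive minimum on the compact set $\overline{\Omega_0}\subset\Omega_u$, so $\rho_0:=\min_{\overline{\Omega_0}}\tilde r>0$; we also shrink $\rho_0$, if necessary, so that $\bigcup_{x_0\in\overline{\Omega_0}}B_{2\rho_0}(x_0)\Subset\Omega$.

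For the first estimate, fix $B_{2\varrho}(x_0)\subset\Omega_0$ with $2\varrho\le1$ and center $x_0$, so that $\tilde r(x_0)\ge\rho_0$. When $\varrho\le2\tilde r(x_0)$, the chain of inequalities in Remark \ref{camplocal} --- the Morrey decay \rif{decayfinale} from scale $\tilde r(x_0)$, the Caccioppoli inequality \rif{caccine2dopo}, and $\snr{u}\equiv1$ --- gives
$$
\int_{B_\varrho(x_0)}H(x,Du)\,dx \le c\,\varrho^{\,n-p+p\gamma}\Bigl(\tilde r(x_0)^{-p\gamma}+\nr{a}_{L^\infty}\tilde r(x_0)^{-(q-p+p\gamma)}\Bigr)\;.
$$
Since $\gamma>0$, both exponents of $\tilde r(x_0)$ are negative, so using $\tilde r(x_0)\ge\rho_0$ and dividing by $\snr{B_\varrho}$ yields $\mint_{B_\varrho(x_0)}H(x,Du)\,dx\le c\,\varrho^{p(\gamma-1)}$ with $c\equiv c(\data,\nr{a}_{L^\infty},\beta,\gamma,\Omega_0)$; as $\varrho\le1$ and $q>p$ we have $\varrho^{p(\gamma-1)}\le\varrho^{q(\gamma-1)}$, which is the claimed bound. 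In the remaining range $\varrho>2\tilde r(x_0)$ (hence $\varrho>2\rho_0$) the inequality is trivial: by \rif{maggioreiniziale} the quantity $\mint_{B_\varrho(x_0)}H(x,Du)\,dx\le(\snr{B_1}(2\rho_0)^{n})^{-1}\nr{H(\cdot,Du)}_{L^{1}(\Omega_0)}$ is a finite constant, while $\varrho\le1$ forces $\varrho^{q(\gamma-1)}\ge1$.

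For the second estimate, I would invoke \rif{ricopri}, which supplies for each $x_0\in\overline{\Omega_0}$ a neighbourhood $B_{r_{x_0}}(x_0)$ on which $[u]_{0,\gamma;B_{r_{x_0}}(x_0)}\le c\,[\tilde r(x_0)]^{-(q/p-1+\gamma)}\le c\,\rho_0^{-(q/p-1+\gamma)}$, with $c\equiv c(\data,\nr{a}_{L^\infty},\beta,\gamma)$; here the radius $r_{x_0}>0$ --- which comes from the continuity stated around \rif{piccolo-raggio} --- is, by the very same lower semicontinuity plus compactness argument used for $\tilde r$, bounded below on $\overline{\Omega_0}$ by a constant $\rho_0'>0$. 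Extracting a finite subcover of $\overline{\Omega_0}$ by balls $B_{r_{x_i}/2}(x_i)$, this cover has Lebesgue number at least $\rho_0'/2$, so any $x,y\in\Omega_0$ with $\snr{x-y}<\rho_0'/2$ lie in a common $B_{r_{x_i}}(x_i)$ and hence $\snr{u(x)-u(y)}\le c\,\rho_0^{-(q/p-1+\gamma)}\snr{x-y}^{\gamma}$; for $\snr{x-y}\ge\rho_0'/2$ one simply uses $\snr{u(x)-u(y)}\le2\le2(\rho_0'/2)^{-\gamma}\snr{x-y}^{\gamma}$ since $\snr{u}\equiv1$. Taking the larger of the two constants gives $[u]_{0,\gamma;\Omega_0}\le\tilde c$, as desired.

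The substance of the argument is the uniform lower bound $\rho_0$ on the good radius over $\overline{\Omega_0}$, which I expect to be the main (and essentially only non-routine) obstacle; once the lower semicontinuity of $x_0\mapsto\tilde r(x_0)$ is in place it is immediate by compactness, and the two displayed bounds then follow by plugging $\tilde r(x_0)\ge\rho_0$ into the quantitative local estimates of Remark \ref{camplocal} and \rif{ricopri}, together with a standard covering.
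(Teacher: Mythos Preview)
Your proposal is correct and follows essentially the same route the paper takes: the paper's entire argument for Proposition \ref{main1pp} is the one-line remark that it follows from \rif{nuovaide}, the quantitative estimates in Remark \ref{camplocal} (in particular \rif{ricopri}), and ``a standard covering argument''; you have simply made that covering argument explicit by isolating the lower semicontinuity of $x_0\mapsto\tilde r(x_0)$ and extracting a uniform positive lower bound on the compact set $\overline{\Omega_0}$. One small remark: for the H\"older seminorm you do not actually need a uniform lower bound on the auxiliary radii $r_{x_0}$ from \rif{piccolo-raggio} --- a finite subcover of $\overline{\Omega_0}$ by such balls already has a positive Lebesgue number, which is all the patching step requires --- and the paper also notes that the second inequality in \rif{parthol} directly implies the first via \rif{caccine2dopo}, so your separate derivation of the energy decay is a slight redundancy but certainly not an error.
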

In \rif{parthol} we notice that the second inequality actually implies the first one via \rif{caccine2dopo}. 
As for the rest of the proof, as mentioned above, we only need to show that $Du$ is locally H\"older continuous in $\Omega_u$.

\noindent \emph{Step 4: Passage to coordinates.}\\
After the proof of the local partial H\"older continuity of $u$, we can now pass to coordinates using stereographic projections. The procedure is standard in the case of functionals with $p$-growth but, since we are dealing with non-standard growth conditions, we need to check extra regularity conditions and therefore we shall repeat it in some detail. Having \rif{parthol} in mind, we fix a certain initial $\gamma$, say $ \gamma = 1/2$. We then consider  bounded open subsets $\tilde{\Omega}\Subset \Omega_0\Subset \Omega_{u}$ and cover $\tilde \Omega$ with finitely many balls $B \subset \Omega_0$ with sufficiently small radius (size and number here only depend on $n,N$, $[u]_{0,1/2;\Omega_0}$ and diam$(\Omega_0)$) such that $u(B)$ lies in single  coordinate neighbourhood of $\SN$. More precisely, \cd{if $B\equiv B_{r(B)}(x_0)$ is one of such balls}, up to rotations we can assume that $u(x_{0})=(-1,0,\cdots,0)$ and that $u^1(x)\leq -1/2$ for every $x\in B$. Given this, with no loss of generality we can reduce to the case \ccc{in which} we are working on an open subset $\tilde \Omega \Subset \Omega$ such that $u^1(x)\leq -1/2$ for every $x \in \tilde \Omega$. This is the setting we shall use in the rest of the proof and our next goal is now to prove that $Du$ is \cc{locally $\beta_{0}$-H\"older continuous} in $\tilde \Omega$, \cc{with $\beta_{0}$ depending only on \ccc{$(\data,\beta,\beta_{1})$}, where $\beta_{1}$ is the exponent appearing in $\eqref{assF2}_{2}$}. The full statement of Theorem \ref{main1} then follows again via a standard covering argument. To proceed, denoting by $P(\cdot)$ the usual stereographic projection $P\colon \SN \setminus \{(1, \ldots, 0)\}\to \er^{N-1}$ and by $S:= P^{-1}\colon \er^{N-1}\to \SN \setminus \{(1, \ldots, 0)\}$ its inverse, i.e.,
\eqn{stereorecall}
$$S(y)=\left(\frac{\snr{y}^{2}-1}{\snr{y}^{2}+1},\frac{2y}{\snr{y}^{2}+1}\right)\;,\qquad S^{-1}(v)=\left(\frac{v^{i}}{1-v^{1}}\right)_{i=2}^{N}\;,$$
we then define $\tilde u := S^{-1}(u)$. We note that
\begin{flalign}\label{S}
\nr{\nabla S}_{L^{\infty}}\le c(N),\quad \nr{\nabla^{2} S}_{L^{\infty}}\le c(N) \quad \mathrm{and} \quad \snr{\nabla (S^{-1})(u(x))}\le c(N)\;,
\end{flalign}
the last inequality being valid for all $x \in \tilde{\Omega}$ (as $u^{1}(x)\leq -1/2$ whenever $x\in \tilde \Omega$). Recalling \rif{stereorecall}, again that $u^{1}(x)\leq -1/2$ whenever $x\in \tilde \Omega$, and that \cd{$\tilde{u}=S^{-1}(u)$}, we get
\eqn{stereo}
$$
\cd{\|\tilde u\|_{L^{\infty}(\tilde \Omega)}\leq \frac{2}{3}\leq 1\;}.
$$
Again \rif{S} implies that if $\tilde w \in W^{1,H}(\tilde{\Omega},\er^{N-1})$, then $S(\tilde w) \in W^{1,H}(\tilde{\Omega},\SN)$. Therefore, by the minimality of $u$ it follows that the map $\tu\in W^{1,H}(\tilde{\Omega},\mathbb{R}^{N-1})$ is a local minimizer of the functional 
\begin{flalign}\label{fung}
W^{1,H}(\tilde{\Omega},\mathbb{R}^{N-1})\ni w \mapsto \int_{\tilde{\Omega}}G(x,w, Dw) \, dx\;,
\end{flalign}
where the integrand $G(\cdot)$ is defined by $$G(x,y,z):=F\left(x, S(y), \nabla S(y) z\right)\qquad \mbox{for}\ x\in \tilde \Omega, \ y\in \er^{N-1}, \ z\in \er^{(N-1)\times n}\;.$$
As it is 
\eqn{ilnabla}
$$|\nabla S(y) z|=\frac{2}{(1+\snr{y}^{2})}\snr{z}$$ 
(as it follows from an elementary but lengthy computation), recalling \cd{$\rif{assF2}_{1}$} we conclude with
\eqn{strutturaher}
$$G(x,y,z)= \tilde{G}(x,y,\snr{z})\equiv \tilde{G}_{x, y}(\snr{z}):=\tilde{F}\left(x,S(y),2(1+\snr{y}^{2})^{-1}\snr{z}\right)\;.$$
By using the starting assumptions \rif{assF}, it is now not difficult to show that for every \cc{$M\geq 3N$} there exist new constants $0 < \tilde \nu\equiv \tilde \nu(\data, M)\leq 1 \leq \tilde L\equiv \tilde L(\data)$, such that
\begin{flalign}\label{assG}
\begin{cases}
\ \tilde \nu H(x,z) \leq G(x,y,z)\le \tilde LH(x,z)\\
\ \snr{\partial G(x,y,z)}\snr{z}+\snr{\partial^{2}G(x,y,z)}\snr{z}^{2}\le \tilde LH(x,z)\\
\ \tilde \nu(\snr{z}^{p-2}+a(x)\snr{z}^{q-2})\snr{\xi}^{2}\le \langle \partial^{2}G(x,y,z)\xi,\xi\rangle\\
\ \snr{\partial G(x_{1},y,z)-\partial G(x_{2},y,z)}\snr{z}\le \tilde L\omega(\snr{x_{1}-x_{2}})[H(x_{1},z)+H(x_{2},z)]+\tilde L\snr{a(x_{1})-a(x_{2})}|z|^{q}\\
\ \snr{G(x,y_{1},z)-G(x,y_{2},z)}\le \tilde L\omega(\snr{y_{1}-y_{2}})H(x,z)\;,
\end{cases}
\end{flalign}
hold whenever $x, x_{1}, x_{2}\in \Omega$, $z \in \er^{(N-1)\times n}\setminus \{0\}$, $\xi \in \er^{(N-1)\times n}$ and $y, y_1, y_2\in \er^{N-1}$ are such that $|y|+|y_1|+ |y_2|\leq M$. In the lines above, $\tilde \nu $ is a non-increasing function of $M$. All the inequalities in \rif{assG} are consequences of the definition in \rif{strutturaher} and of \rif{ilnabla} and we leave the details of the verification to the reader. We just spend a few words on the verification of \rif{assG}$_3$. By using \rif{leduezero}, from the explicit representation in \rif{strutturaher} we get
\eqn{idueterminidddd}
$$
\cc{ \partial^2 G(x,y,z) 
  = 4\frac{\tilde{F}''(x,S(y),\snr{\tilde z})}{(1+\snr{y}^{2})^2} \frac{\tilde z \otimes \tilde z}{|\tilde z|^2} + 4\frac{\tilde{F}'(x,S(y),\snr{\tilde z})}{ (1+\snr{y}^{2})^2}\left[ \frac{\mathbb{I}_{N\times n}}{|\tilde z|}- \frac{\tilde z \otimes \tilde z}{|\tilde z|^3} \right] \;, }
$$ 
where we have denoted $\tilde z := 2 (1+\snr{y}^{2})^{-1}z$. Taking $\xi \in \er^{(N-1)\times n}$ and adding one more null component to both $\tilde z$ and $\xi$ (thereby making then $\er^{N\times n}$ matrices), and using \rif{iduetermini} and \rif{assF}$_3$, yields
$$
 \langle \partial^{2}G(x,y,z)\xi,\xi\rangle \geq \frac{\nu}{2^q} \left(\frac{|z|^{p-2}}{(1+|y|^2)^p}+a(x)\frac{|z|^{q-2}}{(1+|y|^2)^q}\right)|\xi|^2\geq  \frac{\nu(\snr{z}^{p-2}+a(x)\snr{z}^{q-2})}{(2+4M^2)^q} |\xi|^2\;,$$
that is, \rif{assG}$_3$. 
Finally, fix $x \in \tilde \Omega$ and $y \in \er^{N-1}$. By \eqref{assF2}$_1$ and \rif{strutturaher} it follows that
\begin{flalign}\label{tck7}
t\mapsto \tilde{G}_{x, y}(\cdot,t) \ \mbox{is \ non-decreasing}
\end{flalign}
and this means that the structure assumption \eqref{assF2}$_1$ is verified also by $G(\cdot)$. As for the analog of \eqref{assF2}$_2$, observe that \rif{strutturaher} implies that 
$
\tilde{G}_{x,y}''(t)=[2(1+\snr{y}^{2})^{-1}]^{2}\tilde{F}_{x,S(y)}''(2(1+\snr{y}^{2})^{-1}t). 
$
Then, with $\snr{s}<t/2$ and $t>0$, we define $\tilde{s}:=2(1+\snr{y}^{2})^{-1}s$ and $\tilde{t}:=2(1+\snr{y}^{2})^{-1}t$; taking into account \rif{newform} we find
\begin{eqnarray}
\notag \snr{\tilde{G}_{x,y}''(t+s)-\tilde{G}_{x,y}''(t)}&=&\frac{4}{(1+\snr{y}^{2})^{2}}\snr{\tilde{F}_{x,S(y)}''(\tilde{t}+\tilde{s})-\tilde{F}_{x,S(y)}''(\tilde{t})}\\
&\leq & \cc{\frac{c}{(1+\snr{y}^{2})^{2}}}\tilde{F}_{x,S(y)}''(\tilde{t})\left(\frac{\snr{\tilde{s}}}{\tilde{t}}\right)^{\beta_1}= c\tilde{G}_{x,y}''(t)\left(\frac{\snr{s}}{t}\right)^{\beta_1}\label{anche G}\;,
\end{eqnarray}
where it is again $c\equiv c(n,N,\nu,L,p,q)$. We moreover remark that, by the growth conditions in \rif{assG}, exactly as done for \rif{equivalenti} in Remark \ref{discuss-re} we infer that 
\eqn{equivalentiG}
$$
\tilde{G}_{x,y}''(t) t \approx\tilde{G}_{x,y}'(t) \qquad \mbox{holds for every}\ t>0$$
and here the implied constants depend on $n,N,\nu,L,p,q,M$ and \ccc{they} are independent of $(x,y)$.  

\noindent \emph{Step 5: Partial H\"older continuity of the gradient.}\\
First of all, let us observe \cc{that} for reasons that will be clear in a few lines, and in view of \rif{stereo}, in the following, when considering \rif{assG}, we shall permanently use the choice \cd{$M=10N$}. Recall from \emph{Step 2} that $u\in C^{0,\gamma}_{\mathrm{\rm{loc}}}(\Omega_{u},\SN)$ for every $\gamma \in (0,1)$, and so, by $\eqref{S}$ and \rif{parthol}, we have that $\tu \in C^{0,\gamma}(\tilde{\Omega},\mathbb{R}^{N-1})$ for every $\gamma \in (0,1)$, with 
\eqn{komzet0}
$$[\tu]_{0, \gamma;\tilde{\Omega}}\le c(N)[u]_{0, \gamma;\tilde{\Omega}}\leq \ccc{c(\data,\nr{a}_{L^{\infty}},\beta, \gamma, \tilde \Omega)}\;.$$ For any ball $B_{4r}\Subset \tilde{\Omega}$ with $r\leq 1/8$, as $\tu$ minimizes the functional in \eqref{fung} and \rif{stereo} holds, also taking Remark \ref{reun} into account, Lemma \ref{L5} provides
\eqn{holcac}
$$
\mint_{B_{2r}}H(x,D\tu) \ dx \le c\mint_{B_{4r}}H^{-}_{B_{4r}}\left(\frac{\tu-(\tu)_{B_{4r}}}{4r} \right) \ dx\stackleq{komzet0} cH^{-}_{B_{r}}\left(r^{\gamma-1}\right)\;,
$$
holds with \ccc{$c\equiv c(\data, \beta,\gamma, \tilde \Omega)$}, for every $\gamma \in (0,1)$. In particular, it follows that
\begin{flalign}\label{komzet}
\mint_{B_{2r}}H(x,D\tu) \, dx \le cr^{(\gamma-1)q}\;,
\end{flalign}
for all $\gamma \in (0,1)$, where \ccc{$c\equiv c(\data,\nr{a}_{L^{\infty}},\beta, \gamma, \tilde \Omega)$}.  We start fixing $\gamma\ge 1/2$; we shall further increase the value of $\gamma$ in due course of the proof (and the constants involved will increase accordingly). Fix \cd{$B_{r}\equiv B_{r}(x_{0})$} such that $B_{4r}\Subset \tilde{\Omega}$, $r \leq 1/64$, and let $\tv\in W^{1,H}(B_{r},\mathbb{R}^{N-1})$ be a solution to the frozen Dirichlet problem
\begin{flalign}\label{frzet}
\tilde v \mapsto \min_{w \in W^{1,H}_{\tu}(B_{r},\mathbb{R}^{N-1})} \int_{B_{r}}G\left(x,(\tu)_{B_{r}},Dw\right) \, dx\;.
\end{flalign}
By \rif{stereo} the integrand $G(\cdot, (\tu)_{B_{r}},\cdot)$ satisfies assumptions \rif{assG} with $\tilde \nu$ and $\tilde L$ only depending on $\data$ as we have fixed \cd{$M=10N$}. In particular, there exist positive numbers $\tilde \nu, \tilde L$ such that
\eqn{bilanciocrescite}
$$
\tilde \nu(\data) H(x,z) \leq G\left(x,(\tu)_{B_{r}},z\right) \leq \tilde L(\data) H(x,z)
$$
holds whenever $x \in \Omega$ and $z \in \er^{(N-1)\times n}$. By \rif{tck7} and the maximum principle \cite[Theorem 2.3]{leosie} we then have 
\eqn{limitatissimo}
$$\|\tv\|_{L^{\infty}(B_{r})}\leq \sqrt{N}\|\tu\|_{L^{\infty}(B_{r})}\stackleq{stereo} \cd{\frac{2\sqrt{N}}{3}}\;.$$ The validity of the Euler-Lagrange equation for \eqref{frzet} can be checked as done in \cite{CM2} (see also Section \ref{eldp}
and apply the same arguments exposed there without using projections, that is, when no constraints are involved). Specifically, $\tv$ solves
\begin{flalign}\label{elv}
\int_{B_{r}}\partial G\left(x,(\tu)_{B_{r}},D\tv\right)\cdot D\varphi \, dx=0\;, 
\end{flalign}
for all $\varphi \in W^{1,H}_{0}(B_{r},\mathbb{R}^{N-1})$.

\begin{remark}
\emph{From now on, we adopt the following convention. Also taking the content of Remarks \ref{reun}-\ref{reun2} and \rif{bilanciocrescite}-\rif{limitatissimo} into account, the results of Lemma \ref{C1} and \ref{C2} apply to $\tilde u, \tilde v$ and lead to new higher integrability exponents $\delta_g$ and $\sigma_g$. The values of $\delta_g$ and $\sigma_g$ are different from those used in the previous steps for $u$, but essentially equivalent to them. Indeed, they still depend on the same set of parameters, that is $\data$. Therefore, with some abuse of notation, we shall keep on denoting by \cd{$\sigma_g < \delta_g$} the higher integrability exponents provided by the application of Lemmas \ref{C1}-\ref{C2} in the present setting (notice that what is denotes by $N$ in Lemmas \ref{C1}-\ref{C2} is actually $N-1$ here; this \cc{can} be mde rigorous eventually taking the smallest amongst all the exponents considered when the \cc{values} of $\nu$ and $L$ attain \cc{their} minimum and maximum, respectively). All in all, the following inequalities hold as in \rif{046} and \rif{har6fre}:
\eqn{046final}
$$
\mint_{\brx}H(x,D\tv)\, dx \le c\mint_{\brx}H(x, D\tu) \, dx\,,\quad \left(\mint_{B_{r/2}}[H(x,D\tv )]^{1+\delta_{g}} \, dx\right)^{1/(1+\delta_{g})}  \leq \tilde c_1 \mint_{B_{r}}H(x,D\tv )\, dx
$$
and 
\eqn{046final2}
$$
\left(\mint_{B_{r}}[H(x,D\tv )]^{1+\sigma_{g}} \, dx\right)^{1/(1+\sigma_{g})}\le c\left(\mint_{B_{r}}[H(x,D\tu )]^{1+\sigma_{g}} \, dx\right)^{1/(1+\sigma_{g})}\leq  c\mint_{B_{2r}}H(x,D\tu ) \, dx\;, 
$$
for $c, \tilde c_1\equiv c, \tilde c_1 (\data)$. This last dependence on the constants is a consequence of \rif{bilanciocrescite}-\rif{limitatissimo}. 
}
\end{remark}
As $\varphi \equiv \tu-\tv$ is a legal choice in \rif{elv}, using \rif{042} as in \rif{043}, we end up with
\begin{eqnarray}
&& c\mint_{B_{r}}\left(\snr{V_{p}(D\tu)-V_{p}(D\tv)}^{2}+a(x)\snr{V_{q}(D\tu)-V_{q}(D\tv)}^{2}\right) \ dx \notag \\
&&\qquad \le \mint_{B_{r}}\left[G\left(x,(\tu)_{B_{r}},D\tu\right)-G\left(x,(\tu)_{B_{r}},D\tv\right)\right] \ dx \notag \\
&&\qquad =\mint_{B_{r}}\left[G\left(x,(\tu)_{B_{r}},D\tu\right)-G(x,\tu,D\tu) \right]\ dx +\mint_{B_{r}}\left[G(x,\tu,D\tu)-G(x,\tv,D\tv) \right]\ dx \notag \\
&&\qquad \quad +\mint_{B_{r}}\left[G\left(x,\tv,D\tv\right)-G\left(x,(\tv)_{B_{r}},D\tv\right) \right]\ dx \notag \\ && \qquad \quad  +\mint_{B_{r}}\left[G\left(x,(\tv)_{B_{r}},D\tv\right)-G\left(x,(\tu)_{B_{r}},D\tv\right) \right]\ dx =:\sum_{j=1}^{4}(\mathrm{I})_{j}\;,\label{buy}
\end{eqnarray}with $\cd{c\equiv c(\texttt{data})}$. Before taking care of terms $(\mathrm{I})_{1}$-$(\mathrm{I})_{4}$,  we derive a few preliminary inequalities. The first one is an obvious  consequence of \rif{komzet0} and is
\eqn{omegazt} 
$$
\sup_{x\in B_{r}}\omega\left(\snr{\tu(x)-(\tu)_{B_{r}}}\right)\le cr^{\beta\gamma}\leq  cr^{\beta/2}\;,
$$
for \ccc{$c\equiv c(\data, \nr{a}_{L^{\infty}},\beta, \gamma, \tilde \Omega)$} (here recall that $\gamma \geq  1/2$ and $r \leq 1$). Proceeding as for \rif{omegav}, and recalling \rif{046final}, we instead have 
\begin{eqnarray}\label{omegazt2}
\mint_{B_{r}}\omega\left(\snr{\tv-(\tv)_{B_{r}}}\right) \, dx
&\leq &c\omega \left[r\left(H^{-}_{B_{r}}\right)^{-1}\left(\mint_{B_{r}}H(x,D\tv) \, dx\right)\right]\notag \\ &\leq & c\omega \left[r\left(H^{-}_{B_{r}}\right)^{-1}\left(\mint_{B_{r}}H(x,D\tu) \, dx\right)\right]\notag \\ &\stackleq{holcac} &  c\omega \left[r\left(H^{-}_{B_{r}}\right)^{-1}\left(cH^{-}_{B_{r}}\left(r^{\gamma-1}\right)\right)\right] \leq cr^{\beta\gamma}\leq cr^{\beta/2}\;,\label{oppi}
\end{eqnarray}
with \ccc{$c\equiv c(\data, \nr{a}_{L^{\infty}},\beta, \gamma, \tilde \Omega)$}. 
In a totally similar fashion, \cc{as done in} \rif{omegauv}, we have 
\eqn{omegazt3}
$$
\omega\left(\snr{(\tu)_{B_{r}}-(\tv)_{B_{r}}}\right) \le cr^{\beta\gamma}\leq cr^{\beta/2}\;,
$$
where \ccc{$c\equiv c(\texttt{data},\nr{a}_{L^{\infty}}, \beta,\gamma,\tilde \Omega)$}. We are now ready to estimate terms $(\mathrm{I})_{1}$-$(\mathrm{I})_{4}$. We have
$$
(\mathrm{I})_{1}\leq \mint_{B_{r}}\omega\left(\snr{\tu-(\tu)_{B_{r}}}\right)H(x,D\tu) \ dx \stackleq{omegazt} cr^{\beta/2}\mint_{B_{2r}}H(x,D\tu) \ dx\;,
$$
with \ccc{$c\equiv c(\texttt{data},\nr{a}_{L^{\infty}},\beta, \gamma,\tilde \Omega)$. The m}inimality of $\tu$ gives
$
(\mathrm{I})_{2}\le 0.
$
\cc{As for term} $(\mathrm{I})_{3}$, using \cd{\rif{046final2} and \rif{oppi}}, we get
\begin{eqnarray*}
\notag (\mathrm{I})_{3}&\leq &c\left(\mint_{B_{r}}\omega\left(\snr{\tv-(\tv)_{B_{r}}}\right) \ dx\right)^{\frac{\sigma_{g}}{1+\sigma_{g}}}\left(\mint_{B_{r}}[H(x,D\tv)]^{1+\sigma_{g}} \ dx\right)^{\frac{1}{1+\sigma_{g}}}\\ 
&\stackleq{046final2} &c\left(\mint_{B_{r}}\omega\left(\snr{\tv-(\tv)_{B_{r}}}\right) \ dx\right)^{\frac{\sigma_{g}}{1+\sigma_{g}}}\mint_{B_{2r}}H(x,D\tu) \ dx\notag \\
&\stackleq{oppi}& cr^{\frac{\beta\sigma_{g}}{2(1+\sigma_{g})}}\mint_{B_{2r}}H(x,D\tu) \ dx\;,
\end{eqnarray*}
for \ccc{$c\equiv c(\texttt{data},\nr{a}_{L^{\infty}},\beta, \gamma,\tilde \Omega)$} and, in a totally similar fashion, arguing as  for \rif{053}, but using this time \rif{omegazt3}, we find 
$$
(\mathrm{I})_{4}\stackleq{omegazt}cr^{\frac{\beta\sigma_{g}}{2(1+\sigma_{g})}}\mint_{B_{2r}}H(x,D\tu) \ dx\;,
$$
where, again it is \ccc{$c\equiv c(\texttt{data},\nr{a}_{L^{\infty}},\beta, \gamma,\tilde \Omega)$}. Collecting the estimates found above for the terms $(\mathrm{I})_{1}$-$(\mathrm{I})_{4}$ to \rif{buy} yields
$$
\mint_{B_{r}}\left(\snr{V_{p}(D\tu)-V_{p}(D\tv)}^{2}+a(x)\snr{V_{q}(D\tu)-V_{q}(D\tv)}^{2} \right)\ dx\le cr^{\frac{\beta\sigma_{g}}{2(1+\sigma_{g})}}\mint_{B_{2r}}H(x,D\tu) \ dx\;,
$$
where \ccc{$c\equiv c(\texttt{data},\nr{a}_{L^{\infty}},\beta,\gamma,\tilde \Omega)$}. In particular, there holds
\eqn{gr4}
$$
\cd{\mint_{B_{r}}\left(\snr{V_{p}(D\tu)-V_{p}(D\tv)}^{2}+\ai(B_{r})\snr{V_{q}(D\tu)-V_{q}(D\tv)}^{2} \right)\ dx\le cr^{\frac{\beta\sigma_{g}}{2(1+\sigma_{g})}}\mint_{B_{2r}}H(x,D\tu) \ dx\;}.
$$
We now select $x_m \in \cd{\bar{B}_{r}}$ such that 
\eqn{il punto}
$$a(x_m)=\ai(B_{r})$$ and define (keep in mind the notation in \rif{strutturaher})
\eqn{integrandoG}
$$
G_{m}(z):= G\left(x_m,(\tu)_{B_{r}},z\right)\equiv \tilde G_{x_m,(\tu)_{B_{r}}}(|z|) \qquad \mbox{for every}\,  z \in \er^{(N-1)\times n}\;.
$$
The newly defined integrand $G_m(\cdot)$ is of the type $g_0(\cdot)$ considered in Lemma \ref{harg} and satisfies assumptions \rif{assg0} with the choice \cd{$H_0(\cdot)\equiv H_{B_{r}}^{-}(\cdot)$} and for suitable constants $\nu, L$ depending on $\data$ (see the discussion in Step 4 and, in particular, \rif{assG} and \rif{bilanciocrescite}). 
We now proceed applying Lemma \ref{harg} to $\tv$; notice that \rif{maggiorefadopo} is automatically satisfied by the second inequality in \rif{046final} \cc{and therefore verify \rif{harg2}}. According to the terminology adopted in \cite{BCM3,CM1,CM2} the $p$-phase occurs if
\eqn{pph}
$$
\cd{\ai(B_{r})\le 4[a]_{0,\alpha}r^{\alpha-s},\quad s:=\alpha+(\gamma-1)(q-p)\stackrel{\rif{mainbound}_{1}}{>}0\;},
$$
while the $(p,q)$-phase is defined by the complementary condition
\begin{flalign}\label{pqph}
\cd{\ai(B_{r})>4[a]_{0,\alpha}r^{\alpha-s}}.
\end{flalign}
It is then easy to see that 
\begin{flalign}\label{a}
\cd{
\begin{cases}
\ \as(B_{r})\le 6[a]_{0,\alpha}r^{\alpha-s}\quad &\mbox{holds in the }p\mbox{-phase}\\
\ \as(B_{r})\le \frac{3}{2}\ai(B_{r})\quad &\mbox{holds in the }(p,q)\mbox{-phase}\;.
\end{cases}}
\end{flalign}
Notice that the two phases described above depend on the number $\gamma \geq 1/2$, which is going to be chosen later as an absolute function of $\data$. We start estimating, for every $\varphi \in C^{\infty}_{c}(B_{r/2},\RN)$ 
\begin{eqnarray}
\notag \left | \ \mint_{B_{r/2}}\partial G_{m}(D\tv)\cdot D\varphi \ dx \ \right | &\stackrel{\eqref{elv}}{=}&\left | \ \mint_{B_{r/2}}\left[\partial G_{m}(D\tv)-\partial G\left(x,(\tu)_{B_{r}},D\tv\right)\right]\cdot D\varphi \ dx \ \right |\\ 
\notag
&\stackrel{\eqref{assG}_4}{\le}&c\nr{D\varphi}_{L^{\infty}(B_{r/2})}\mint_{B_{r/2}}\omega(|x_{m}-x|)\left[\frac{H(x_{m},D\tv)}{\snr{D\tv}}+\frac{H(x,D\tv)}{\snr{D\tv}}\right] \ dx\\
\notag
&&+c\nr{D\varphi}_{L^{\infty}(B_{r/2})}\mint_{B_{r/2}}\left[a(x)-a(x_{m})\right]\snr{D\tv}^{q-1} \ dx\\
\notag
&\stackrel{\eqref{omegabeta}}{\le} & cr^{\beta}\nr{D\varphi}_{L^{\infty}(B_{r/2})}\mint_{B_{r/2}}\snr{D\tv}^{p-1} \ dx\\
\notag
&&+cr^{\beta}\nr{D\varphi}_{L^{\infty}(B_{r/2})}\mint_{B_{r/2}}\cd{\left(a(x)+a_{i}(B_{r})\right)}\snr{D\tv}^{q-1} \ dx\\
\notag
&&+c[a]_{0, \alpha}r^{\alpha}\nr{D\varphi}_{L^{\infty}(B_{r/2})}\mint_{B_{r/2}}\snr{D\tv}^{q-1} \ dx\\ &=:&(\mathrm{II})_{1}+(\mathrm{II})_{2}+(\mathrm{II})_{3}\label{found}\;,
\end{eqnarray}
and proceed estimating the terms appearing in the last three lines. In any case we have, 
Young's inequality gives
\eqn{gr6}
$$
(\mathrm{II})_{1}\le  c(\texttt{data})r^{\beta}\mint_{B_{r}}\left( \snr{D\tv}^{p}+\nr{D\varphi}^{p}_{L^{\infty}(B_{r/2})} \right)\ dx\;.
$$
In order to bound the remaining two terms we distinguish between the $p$-phase \eqref{pph} and the $(p,q)$-phase \eqref{pqph}. We start noticing that in the $p$-phase we have
\eqn{gr5}
$$
\mint_{B_{r/2}}H(x,D\tv) \ dx \stackleq{046final} c\mint_{B_{r}}H(x,D\tu) \ dx\stackleq{holcac}cH^{-}_{B_{r}}(r^{\gamma-1})\stackleq{pph}cr^{(\gamma-1)p}\;,
$$
for \ccc{$c\equiv c(\texttt{data},\nr{a}_{L^{\infty}},\beta, \gamma,\tilde \Omega)$}. As $q-1<p$, using H\"older's inequality we have
\begin{eqnarray}
(\mathrm{II})_{2}&\stackrel{\eqref{a}_{1}}{\le} &cr^{\beta+\alpha-s}\nr{D\varphi}_{L^{\infty}(B_{r/2})}\left(\mint_{B_{r/2}}\snr{D\tv}^{p} \ dx\right)^{\frac{q-p}{p}}\left(\mint_{B_{r/2}}\snr{D\tv}^{p} \ dx\right)^{\frac{p-1}{p}}\nonumber \\
&\leq  &cr^{\beta+\alpha-s}\nr{D\varphi}_{L^{\infty}(B_{r/2})}\left(\mint_{B_{r/2}}H(x, D\tv) \ dx\right)^{\frac{q-p}{p}}\left(\mint_{B_{r/2}}\snr{D\tv}^{p} \ dx\right)^{\frac{p-1}{p}}\nonumber \\&
\stackleq{gr5}&cr^{\beta+\alpha-s+(\gamma-1)(q-p)}\nr{D\varphi}_{L^{\infty}(B_{r/2})}
\left(\mint_{B_{r/2}}\snr{D\tv}^{p} \ dx\right)^{\frac{p-1}{p}}\notag \\ &\stackleq{pph} & cr^{\beta}\mint_{B_{r}} \left(\snr{D\tv}^{p}+\nr{D\varphi}^{p}_{L^{\infty}(B_{r/2})} \right)\ dx\;,\notag
\end{eqnarray}
for \ccc{$c\equiv c(\texttt{data},\nr{a}_{L^{\infty}},\beta, \gamma,\tilde \Omega)$}. Finally, we similarly have
\begin{eqnarray*}%\label{gr8}
(\mathrm{II})_{3}&\le &cr^{s}r^{\alpha-s}\nr{D\varphi}_{L^{\infty}(B_{r/2})}\left(\mint_{B_{r/2}}\snr{D\tv}^{p} \ dx\right)^{\frac{q-p}{p}}\left(\mint_{B_{r/2}}\snr{D\tv}^{p} \ dx\right)^{\frac{p-1}{p}}\notag \\ &\stackrel{\rif{pph}, \rif{gr5}}{\leq}&cr^{s}\mint_{B_{r}}\left( \snr{D\tv}^{p}+\nr{D\varphi}^{p}_{L^{\infty}(B_{r/2})} \right)\ dx\;,
\end{eqnarray*}
with \ccc{$c\equiv c(\texttt{data},\nr{a}_{L^{\infty}},\beta, \gamma,\tilde \Omega)$}. We now consider the occurrence of the \cd{$(p,q)$-phase \rif{pqph}}. We have, again by H\"older's inequality
\begin{eqnarray*}
(\mathrm{II})_{2}&\stackrel{\eqref{a}_{2}}{\le}&cr^{\beta}\nr{D\varphi}_{L^{\infty}(B_{r/2})}\left[\ai(B_{r})\right]^{\frac{1}{q}}\mint_{B_{r}}\left[\ai(B_{r})\right]^{\frac{q-1}{q}}\snr{D\tv}^{q-1} \ dx\nonumber\\
&\le &cr^{\beta}\nr{D\varphi}_{L^{\infty}(B_{r/2})}\left[\ai(B_{r})\right]^{\frac{1}{q}}\left(\mint_{B_{r}}\ai(B_{r})\snr{D\tv}^{q} \ dx\right)^{\frac{q-1}{q}}\nonumber\\
&\le &cr^{\beta}\mint_{B_{r}}\left( \ai(B_{r})\snr{D\tv}^{q}+\ai(B_{r})\nr{D\varphi}_{L^{\infty}(B_{r/2})}^{q} \right) \ dx
\end{eqnarray*}
and
\begin{eqnarray*}
\notag (\mathrm{II})_{3}&=&c[a]_{0, \alpha}r^{\alpha-s}r^{s}\nr{D\varphi}_{L^{\infty}(B_{r/2})}\mint_{B_{r/2}}\snr{D\tv}^{q-1} \ dx\\&\stackleq{pqph} &cr^{s}\nr{D\varphi}_{L^{\infty}(B_{r/2})}\left[\ai(B_{r})\right]^{\frac{1}{q}}\mint_{B_{r}}\left[\ai(B_{r})\right]^{\frac{q-1}{q}}\snr{D\tv}^{q-1}\ dx\notag \\ &\le& cr^{s}\mint_{B_{r}}\left( \ai(B_{r})\snr{D\tv}^{q}+\ai(B_{r})\nr{D\varphi}_{L^{\infty}(B_{r/2})}^{q} \right) \ dx\;,
\end{eqnarray*}where \ccc{$c\equiv c(\texttt{data},\beta)$}. Collecting the estimates founds for the terms $(\mathrm{II})_{1}, (\mathrm{II})_{2}, (\mathrm{II})_{3}$ to \rif{found} and recalling \rif{pph}, in any case we conclude with
\begin{flalign*}
\left | \ \mint_{B_{r/2}}\partial G_{m}(D\tv)\cdot D\varphi \ dx \ \right |\le cr^{\min\left\{\beta,\alpha-(q-p)/2\right\}}\mint_{B_{r}}\left[H^{-}_{B_{r}}(D\tv)+H^{-}_{B_{r}}\left(\nr{D\varphi}_{L^{\infty}(B_{r/2})}\right) \right]\ dx\;,
\end{flalign*}
with \ccc{$c\equiv c(\texttt{data},\nr{a}_{L^{\infty}},\beta, \gamma,\tilde \Omega)$}. Here we have used $\gamma\geq 1/2$, so that it is $s\ge \alpha-(q-p)/2>0$. Lemma \ref{harg} (notice that the number $N$ used there is actually $N-1$ in this context; recall again that here it is $N>1$) yields the existence of $\tilde{h}\in W^{1,H^{-}_{B_{r}}}_{\tilde v}(B_{r/2},\mathbb{R}^{N-1})$ satisfying 
$$
\mint_{B_{r/2}}\partial G_{m}(D\tilde h)\cdot D\varphi \ dx=0 \quad \mbox{for every $\varphi\in W^{1,H^{-}_{B_{r}}}_0(B_{r/2},\mathbb{R}^{N-1})$}\;,
$$
and, such that
\eqn{gr1300}
$$
\cd{\mint_{B_{r/2}}\left(\snr{V_{p}(D\tv)-V_{p}(D\tilde{h})}^{2}+\ai(B_{r})\snr{V_{q}(D\tv)-V_{q}(D\tilde{h})}^{2}\right) \ dx\le cr^{m}\mint_{B_{r}}H_{B_r}^{-}(D\tilde v)\ dx\;,}
$$
where \ccc{$c\equiv c(\texttt{data},\nr{a}_{L^{\infty}},\beta,\gamma,\tilde \Omega)$} and \cc{$m=m(n,N,\nu,L,p,q,\alpha)$}. Needless to say, $\tilde h$ soves 
\eqn{ilminimo}
$$ \tilde h  \mapsto  \min_{w\in W^{1,H_{B_{r}}^{-}}_{\tilde v}(B_{r/2},\er^{N-1})}\mint_{B_{r/2}}G_{m}(D w) \ dx\;$$
so that, recalling again \rif{il punto}, we find
\eqn{sososo}
$$
\mint_{B_{r/2}}H_{B_r}^{-}(D\tilde h) \ dx \leq c \mint_{B_{r}}H_{B_r}^{-}(D\tilde v) \ dx \leq 
\mint_{B_{r/2}}H(x,D\tilde v) \ dx \stackleq{046final} c \mint_{B_{r}}H(x,D\tu) \ dx\;.
$$
Hence, from \eqref{gr4}, \rif{gr1300}, and the last inequality in the above display, we obtain
\eqn{gr11}
$$
\cd{\mint_{B_{r/2}}\left(\snr{V_{p}(D\tu)-V_{p}(D\tilde{h})}^{2}+\ai(B_{r})\snr{V_{q}(D\tu)-V_{q}(D\tilde{h})}^{2} \right)\ dx \le cr^{\kappa}\mint_{B_{2r}}H(x,D\tu) \ dx\;},
$$
for $$\kappa:=\min\left\{m,\frac{\beta\sigma_{g}}{2(1+\sigma_{g})}\right\}<1$$ and \ccc{$c \equiv c(\texttt{data},\nr{a}_{L^{\infty}},\beta, \gamma,\tilde \Omega)$}. After some standard manipulations on \rif{gr11}, see e.g. \cite[Section 10]{BCM3} and \cite[pp. 483]{CM1}, we have 
\begin{flalign}\label{gr12}
\mint_{B_{r/2}}\snr{D\tu-D\tilde{h}}^{p} \ dx \le \mint_{B_{r/2}}H^{-}_{B_{r}}\left(D\tu-D\tilde{h}\right) \ dx \le cr^{\kappa/2}\mint_{B_{2r}}H(x,D\tu) \ dx\;,
\end{flalign}
for \ccc{$c\equiv c(\texttt{data},\nr{a}_{L^{\infty}},\beta, \gamma,\tilde \Omega)$}. Now we make a further restriction on the size of $\gamma$ imposing that 
\eqn{previous}
$$\gamma\geq 1-\frac\kappa{4q}$$ 
(which is still larger than $1/2$), and apply \eqref{holcac}; we use the resulting inequality in  \eqref{gr12} to obtain
\begin{flalign}\label{gr13}
\mint_{B_{r/2}}\snr{D\tu-D\tilde{h}}^{p} \ dx \le cr^{\kappa/4}\;,
\end{flalign}
where \ccc{$c\equiv c(\texttt{data}, \nr{a}_{L^{\infty}},\beta,\gamma,\tilde \Omega)$}. By using the content of Remark \ref{spiega} below we have that
\begin{flalign}\label{dec}
\mint_{B_{\rr}}H^{-}_{B_{r}}\left(D\tilde{h}-(D\tilde{h})_{B_{\rr}}\right) \ dx\le c\left(\frac{\rr}{r}\right)^{\mu}\mint_{B_{r}}H(x,D\tu) \ dx\;,
\end{flalign}
holds for concentric balls $B_{\rr}\subset B_{r/2}$; here we take $\rr \leq r/8$. Here it is  $c\equiv c(n,N,\nu,L,p,q)$ and \cd{$\mu\equiv \mu(n,N,\nu,L,p,q,\beta_{1})$}. We estimate
\begin{eqnarray}\label{gr14}
\notag \mint_{B_{\rr}}\snr{D\tu-(D\tu)_{B_{\rr}}}^{p} \ dx &\leq & c\mint_{B_{\rr}}\snr{D\tu-(D\tilde h)_{B_{\rr}}}^{p} \ dx\\ &\stackleq{gr13} &c\left\{\left(\frac{r}{\rr}\right)^{n}r^{\kappa/4}+\mint_{B_{\rr}}H^{-}_{B_{r}}\left(D\tilde{h}-(D\tilde{h})_{B_{\rr}}\right) \ dx\right\}\nonumber \\
&\stackleq{dec}&c\left\{\left(\frac{r}{\rr}\right)^{n}r^{\kappa/4}+\left(\frac{\rr}{r}\right)^{\mu}\mint_{B_{r}}H(x,D\tu) \ dx\right\}\notag \\ &\stackleq{holcac}&c\left\{\left(\frac{r}{\rr}\right)^{n}r^{\kappa/4}+\left(\frac{\rr}{r}\right)^{\mu}r^{(\gamma-1)q}\right\}\;,
\end{eqnarray}
where \ccc{$c\equiv c(\texttt{data}, \nr{a}_{L^{\infty}},\beta,\gamma,\tilde \Omega)$}. In \eqref{gr14} we pick
$$\rr=\frac{r^{1+a}}{8} \quad \mbox{with} \quad a:=\frac{(1-\gamma)q+\kappa/4}{\mu+n} \quad \mbox{and}\quad \gamma = 1-\frac{ \kappa\mu}{8nq}\;,$$ ($\gamma$ also meets the condition in \rif{previous}) so that \rif{gr14} yields
\eqn{ilbeta}
$$
\mint_{B_{\rr}}\snr{D\tu-(D\tu)_{B_{\rr}}}^{p} \ dx\le c \rr^{\frac{\kappa/4-na}{1+a}}\leq c\rr^{\beta_{0}p}\;, \ \ \qquad \beta_{0}:=\frac{\kappa\mu}{16p(n+\mu)}\;.
$$
The classical H\"older continuity characterization of Campanato and Meyers, a standard covering argument, and the fact that $\tilde \Omega\Subset \Omega_u$ is arbitrary, allow to conclude that $D\tu\in C^{0,\beta_{0}}_{\mathrm{loc}}\left(\Omega_{u},\mathbb{R}^{(N-1)\times n}\right)$, with $\beta_{0}$ as in \rif{ilbeta} and therefore depends on \ccc{$(\texttt{data},\beta,\beta_{1})$}. Finally, using $\eqref{S}_{1,2}$ we get $Du= D(S(\tilde u))\in C^{0,\beta_{0}}_{\mathrm{loc}}\left(\Omega_{u},\mathbb{R}^{N\times n}\right)$ and the proof of the partial local H\"older continuity of the gradient as stated in \rif{mainassertion} is complete in the case it is $p(1+\delta_{g})\leq n$.\\\\
\emph{Step 6: The case $p(1+\delta_{g})>n$.}

In this case the singular set is empty $\Sigma_u=\Omega_u=\Omega$ as the right-hand side in \rif{ss} is empty. Therefore we see from Step 3 that $u \in C^{0, \gamma}_{\loc}(\Omega, \SN)$ for every $\gamma <1$ and the rest of the proof, i.e., the local H\"older continuity of $Du$ follows as for the case when $p(1+\delta_{g})\leq n$. 
\begin{remark}\label{spiega}
\emph{We briefly explain how to get estimate \rif{dec} from the results of \cite{DSV2}. Recalling the notation in \rif{integrandoG}, by \rif{equivalentiG} we can argue exactly as in Remark \ref{discuss-re} to get that 
\eqn{decayvv0}
$$
\sup_{B_{r/4}}\, H^{-}_{B_{r}}\left(D\tilde{h}\right)  \leq c\mint_{B_{r/2}}H^{-}_{B_{r}}\left(D\tilde{h}\right) \,dx
$$
holds for a constant \cd{$c\equiv c(n,N,\nu, L,p,q)$}. Moreover, by \rif{anche G} we are able to satisfy \cite[Assumption 2.2]{DSV}, where we can take $\varphi(\cdot) \equiv \tilde G_{x_m,(\tu)_{B_{r}}}(\cdot)$
and therefore (also taking into account the definitions in \rif{vpvq} and in \cite[(1.3)]{DSV}) we can apply \cite[Theorem 6.4]{DSV} that in the present setting gives
\begin{eqnarray}
&&\notag \mint_{B_{\rr}}\left(\snr{V_{p}(D\tilde h)-(V_{p}(D\tilde h))_{B_{\rr}}}^{2}+\ai(B_{r})\snr{V_{q}(D\tilde h)-(V_{q}(D\tilde h))_{B_{\rr}}}^{2} \right)\ dx\\ &&
\qquad  \leq c\left(\frac{\rr}{r}\right)^{2\mu}
\mint_{B_{r/2}}\left(\snr{V_{p}(D\tilde h)-(V_{p}(D\tilde h))_{B_{r/2}}}^{2}+\ai(B_{r})\snr{V_{q}(D\tilde h)-(V_{q}(D\tilde h))_{B_{r/2}}}^{2}\right) \ dx\;,
\label{decayvv01}
\end{eqnarray}
whenever $B_{\varrho} \subset  B_{r/2}$ is concentric to $B_{r/2}$, where $c\geq 1$ and $\mu \in (0, 1/2)$ are both depending on $n,N,p,q,\ratio $ and $\beta_{1}$. Following the method explained in \cite[Theorem 3.1]{BCM2}, \ccc{see also \cite[Proposition 3.3]{meom}}, and combining \rif{decayvv0}-\rif{decayvv01} with \rif{V}, finally yields
$$
\mint_{B_{\rr}}H^{-}_{B_{r}}\left(D\tilde{h}-(D\tilde{h})_{B_{\rr}}\right) \ dx\le c\left(\frac{\rr}{r}\right)^{\mu}\mint_{B_{r/2}}H^{-}_{B_{r}}(D\tilde h) \ dx\;,
$$
from which \rif{dec} obviously follows. 
}
\end{remark}

\section{Weighted Hausdorff measures and singular sets}\label{cap-proofs}
\subsection{Proof of Proposition \ref{triviaros1}}
Observe that, for a ball $B$ such that $\beta_4^{-1}B\subset \Omega$ and $r(B)\leq 1$ we then have
\begin{eqnarray*}
\snr{\beta_4^{-1}B}\supess_{x\in \beta_4^{-1}B}\Phi(x,1/r(\beta_4^{-1}B))&=&\frac{\snr{B}}{\beta_4^{n}}\supess_{x\in \beta_4^{-1}B}\Phi(x,\beta_4/r(B))\\
&\stackleq{controllo} & \frac{c_{d}}{\beta_4^{n}}\snr{B}\infess_{x\in \beta_4^{-1}B}\Phi(x,1/r(B))\le \frac{c_{d}}{\beta_4^{n}}\snr{B}\infess_{x\in B}\Phi(x,1/r(B))\;.
\end{eqnarray*}
By taking balls $B$ such that \cc{$r(B)\leq \kappa$ and $\kappa \leq 1$, we find
$
\mathcal{H}_{\Phi, \kappa/\beta_4}^{+} \leq (c_d/\beta_4^{n}) \mathcal{H}_{\Phi, \kappa}^{-}, 
$}
from which \rif{othercontrol} follows by letting \cc{$\kappa \to 0$}. As for the proof of \rif{othercontrol2}, we observe that in this case it is $\Phi (x, t)= [t^p +a(x)t^q]^{1+\sigma}\approx t^{p(1+\sigma)} +[a(x)]^{1+\sigma}t^{q(1+\sigma)}$, \cc{with constants implicit in "$\approx$" depending on $\sigma$}. Then, for $B \subset \Omega$ and $1 \leq t \leq 1/r(B)$, we have
\begin{eqnarray}
\notag \supess_{x \in B}\, \Phi\left(x,  t\right)&\leq & \infess_{x \in B}\, \Phi\left(x, t \right) + \left\{[\as(B)]^{1+\sigma}-[\ai(B)]^{1+\sigma}\right\}t^{q(1+\sigma)}\\
\notag &\leq & \infess_{x \in B}\, \Phi\left(x, t \right) +c [r(B)]^{\alpha(1+\sigma)}t^{q(1+\sigma)} \\ &\leq & 
 \infess_{x \in B}\, \Phi\left(x, t \right) +c \ccc{[r(B)]^{(\alpha-q+p)(1+\sigma)}t^{p(1+\sigma)}} \leq  c\infess_{x \in B}\, \Phi\left(x, t \right) \label{condsi}
\end{eqnarray}
where $c \equiv c ( [a]_{0, \alpha},\sigma)$. Therefore \rif{controllo} is satisfied for $\beta_4=1$ and \cd{assertion \rif{othercontrol2} follows by \rif{othercontrol}}. 
\subsection{Proof of Theorem \ref{L11}}
The proof is a suitable modification of the one which is valid for the standard $W^{1,p}$-capacity.  Thanks to the Choquet property \rif{cho}, we can reduce to the case when $E$ is a compact subset. Therefore, recalling that ${Cap}_{\Phi}^*(K)={Cap}_{\Phi}(K)$ whenever  $K \subset \Omega$ is a compact subset, we can then compute \cd{${Cap}_{\Phi}(E)$} via \rif{cappi0}. We now claim that there exists a positive constant $c$, essentially depending on $E$, such that if $V$ is a bounded open set such that $E\Subset V \subset \Omega$, then there exists an open set $W$ and a function $f\in \mathcal{R}(E)$ with the following features:
\begin{flalign}\label{indu}
\begin{cases}
 \ E\subset W\subset \left\{ x \in \Omega\colon f(x)=1 \right\}\,,\ \ 
 \ \supp\, f\subset V\;, f \in C_0(\Omega);\\
 \ \displaystyle \int_{\Omega} \f(x,\snr{Df}) \, dx <c\;.
\end{cases}
\end{flalign}
Let $V\subset \Omega$ be an open set as above and fix \cd{$\kappa=\frac{1}{4}\min\left\{\dist(E,\mathbb{R}^{n}\setminus V),1\right\}$}. Since $\mathcal{H}_{\f}(E)<\infty$ and $E$ is compact, there exists a positive integer $m=m(E)$ and a finite collection of open balls $\{B_{r_{j}}(x_{j})\}_{j\leq m}\in \cd{\mathcal{C}^{\kappa/2}_{E}}$ such that $\{x_{j}\}_{j\leq m}\subset E$, $B_{2r_{j}}(x_{j})\Subset \Omega$ for all $j \in \{1,\cdots,m\}$ and
\begin{flalign}\label{086}
B_{r_{j}}(x_{j})\cap E \not = \emptyset \ \mathrm{for \ all \ }j \in \{1,\cdots, m\} \quad \mathrm{and} \quad \sum_{j=1}^{m}\int_{B_{r_{j}}(x_{j})}\f\left(x,1/r_{j}\right) \, dx\le 2\left[\mathcal{H}_{\f}(E)+1\right]\;. 
\end{flalign}
We introduce $W:=\bigcup_{j=1}^{m}B_{r_{j}}(x_{j})$ and the maps $f_j$, defined on the whole $\mathbb{R}^{n}$, such that $f_{j}(x):=1$ if $\snr{x-x_{j}}\le r_{j}$, $f_{j}(x):=2- \snr{x-x_{j}}/r_j$ if
$r_{j}<\snr{x-x_{j}}\le 2r_{j}$ and $f_{j}(x):=0$ if $2r_{j}<\snr{x-x_{j}}$. With $\beta_4 \in (0,1)$ being the constant appearing in \rif{controllo},  we have 
\begin{eqnarray*}
\int_{\Omega}\f(x,\snr{Df_{j}}) \, dx& \le & \int_{B_{2r_{j}}(x_{j})}\f\left(x,1/r_{j}\right) \, dx\notag \\ 
&\stackleq{0777-2}  & \frac{c_g}{\beta_4^q}\int_{B_{2r_{j}}(x_{j})}\f\left(x,\beta_4/r_{j}\right) \, dx\nonumber\\ &\leq & \cd{\frac{2^{n}c_g}{\beta_4^q}}\snr{B_{r_{j}}(x_{j})}\cc{\supess_{x \in B_{2r_{j}}(x_{j})}\, \Phi\left(x, \beta_4/r_{j}\right)} \nonumber\\ &\stackleq{controllo} & \cd{\frac{2^{n}c_gc_d}{\beta_4^q}}\snr{B_{r_{j}}(x_{j})}\cc{\infess_{x \in B_{r_{j}}(x_{j})}\, \Phi\left(x, 1/r_{j}\right)}
\leq  \cd{\frac{2^{n}c_gc_d}{\beta_4^q}}\int_{B_{r_{j}}(x_{j})}\f\left(x,1/r_{j}\right) \, dx\;.
\end{eqnarray*}
In particular, it follows that $f_{j}\in W^{1,\Phi}(\Omega)$. We then set $$f:=\max_{j \in \{1,\cdots, m\}}f_{j}\;,$$ which is continuous, as every $f_{j}$ is. Moreover, if $x\in W$, then $x \in B_{r_{j}}(x_{j})$ for some $j \in \{1,\cdots,m\}$ and, as a consequence, $f_{j}(x)=1$;  thus $f(x)=1$, so $f(W)\equiv \{1\}$ and using the content of the last display, the lattice property of Sobolev functions, see \cite[Theorem 1.20]{HKM} and \rif{086}, we obtain
\begin{eqnarray}
\notag \int_{\mathbb{R}^{n}}\f(x,\snr{Df}) \, dx & \le & \sum_{j=1}^{m}\int_{\Omega}\f(x,\snr{Df_{j}}) \, dx \\ &\le &  c\sum_{j=1}^{m}\int_{B_{r_{j}}(x_{j})}\f\left(x,1/r_{j}\right) \, dx\stackleq{086} c \left[\mathcal{H}_{\f}(E)+1\right]\;,\notag
\end{eqnarray}
with \cd{$c= c (n,\tilde c_g,c_{d},\beta_4, q)$}, hence $f \in W^{1,\f}(\Omega)\cap C_{0}(\Omega)$. Finally, observe that  if \cc{$x\in \mathbb{R}^{n}\setminus V$}, then $\cc{\snr{x-x_{j}}\ge \dist(E,\mathbb{R}^{n}\setminus V)\ge 4\kappa}$, so $f(x)=0$ and $\supp f\subset V$. This completes the proof of \rif{indu}. Using the above construction inductively, for any $k \in \N$ we find a collection of open sets $\{V_{k}\}_{k}$, with $V_{0}=\emptyset$ and a sequence of functions $\{\tilde f_{k}\}_{k}$ such that
\begin{flalign}\label{089}
\begin{cases}
 \ E\subset V_{k+1}\subset V_{k}\;, \ \ \overline{V}_{k+1}\subset \{x \in \Omega\colon \ \tilde f_{k}(x)=1\}\;, \ \ 
  \supp\, \tilde f_{k}\subset V_{k}\\
\displaystyle  \ \int_{\Omega} \f(x,\snr{D\tilde f_{k}})  \, dx \le c_*\;,
\end{cases}
\end{flalign}
with $c_*$ being independent of $k \in \en$. For $j \in \N$, define $$S_j=\sum_{k=1}^{j}\frac 1k \qquad \mbox{and}\qquad g_{j}=\frac{1}{S_{j}}\sum_{k=1}^{j}\frac{\tilde f_{k}}{k}\;.$$ From the above discussion, $\tilde f_{j}$ belongs to $\mathcal{R}(E)$ for every $j$, so, by construction, $g_{j}\in \mathcal{R}(E)$, and, given that $\supp\,  \snr{D\tilde f_{k}}\subset V_{k}\setminus \overline{V}_{k+1}$, we find
\begin{flalign*}
{Cap}_{\f}(E)\le& \int_{\mathbb{R}^{n}}\f(x,\snr{Dg_{j}}) \, dx=\sum_{k=1}^{j}\int_{V_{k}\setminus V_{k+1}}\f(x,S_j^{-1}k^{-1}\snr{D\tilde f_{k}}) \, dx\stackrel{ \eqref{0777-2},\eqref{089}}{\leq} \frac {c_*\tilde{c}_g}{S_{j}^{p}}\sum_{k=1}^{j}\frac 1{k^{p}}\to0 
\end{flalign*}
as $j\to \infty$, because $p>1$. The proof is complete. 
\subsection{Proof of Theorem \ref{T3}}
The assertions \rif{zerop}-\rif{zeroq} have already been proved in Step 3 from the proof of Theorem \ref{main1}; see \rif{dimensione-rid1}-\rif{dimensione-rid2}. We therefore proceed with the proof of \rif{zerom}. We abbreviate 
$\Sigma_{u}=\Sigma_{u}^p\cup \Sigma_{u}^q$ where $\Sigma_{u}^p:=\Sigma_{u}\cap \{x_{0}\in \Omega \colon a(x_{0})=0\}$ and $\Sigma_{u}^q:=\Sigma_{u}\cap \{x_{0}\in \Omega \colon a(x_{0})>0\}$. 
It is therefore sufficient to \cc{show}
\eqn{ledue}
$$
\mathcal{H}_{H^{1+\delta_g}}(\Sigma_{u}^{p})=0\quad \mathrm{and}\quad \mathcal{H}_{H^{1+\delta_g}}(\Sigma_{u}^{q})=0\;.
$$
The implication concerning the capacity in \rif{zerom} will then be a consequence of Theorem \ref{L11}. 
To prove \rif{ledue}, we use \rif{othercontrol2} from Proposition \ref{triviaros1}, that gives, in particular, that $\mathcal{H}_{H^{1+\delta_g}}  \lesssim \mathcal{H}_{H^{1+\delta_g}}^{-}$. On the other hand, by the very definition of $\Sigma^p_u$, we have that 
\eqn{unah}
$$
\mathcal{H}_{H^{1+\delta_g}}^{-}(\Sigma^p_u)\lesssim \mathcal{H}^{n-p-p\delta_g}(\Sigma^p_u)\;.$$
Indeed, taking a covering from \cd{$\mathcal{C}_{\Sigma_u^p}^{\kappa}$} for any \cd{$\kappa\in (0,1)$} as in \rif{077}, we see that every ball $B$ of the covering (that for obvious reasons can be assumed to touch $\Sigma_u^p$) is such that $\ai(B)=0$. Therefore $\mathcal{H}_{H^{1+\delta_g}}^{-}(\Sigma^p_u)$ is equivalent to the $(n-p-p\delta_g)$-dimensional  spherical Hausdorff measure of $\Sigma_u^p$ and \rif{unah} is proved. We conclude that the first equality in \rif{ledue} follows from the already proved fact that $\mathcal{H}^{n-p-p\delta_g}(\Sigma^p_u)=0$. We now prove the second equality in \rif{ledue}. For this we \cc{show} that $
\mathcal{H}_{H^{1+\delta_g}}(\Sigma_{u,m}^{q})=0$ for every integer $m$, where $\Sigma_{u,m}^{q}:= \Sigma_{u}^{q}\cap \{a(x)>1/m\}\ $. The key observation is that there exists a positive number \cd{$\tilde \kappa\equiv \tilde \kappa (\alpha, [a]_{0, \alpha}, m)\in (0,1)$} such that, if \cd{$r(B)<\tilde \kappa $} and $B$ touches $\Sigma_{u,m}^q$, then $2m\ai(B)>1$. Therefore, we take a covering \cd{$\{B_j\}\in \mathcal{C}_{\Sigma_{u,m}^q}^{\kappa}$} (again we assume that each of the balls from the covering is touching $\Sigma_{u,m}^q$) with \cd{$\kappa < \tilde \kappa$}, and estimate as follows
\begin{eqnarray*}
&&\sum_{j \in \N}[r(B_j)]^{n}\left\{[r(B_j)]^{-p}+\ai(B_j)[r(B_j)]^{-q}\right\}^{1+\delta_g}\\ &&\qquad \leq  cm^{1+\delta_g}\sum_{j \in \N}[r(B_j)]^{n}\left\{\ai(B_j)[r(B_j)]^{-p}+\ai(B_j)[r(B_j)]^{-q}\right\}^{1+\delta_g}\\
&&\qquad \leq cm^{1+\delta_g}\|a\|_{L^\infty}^{1+\delta_g}\sum_{j \in \N}[r(B_j)]^{n-q(1+\delta_g)}\;.
\end{eqnarray*}
The above relation implies then that $$ \mathcal{H}_{H^{1+\delta_g}}^{-}(\Sigma_{u,m}^{q})\lesssim  \mathcal{H}^{n-q-q\delta_g}(\Sigma_{u,m}^{q})\leq \mathcal{H}^{n-q-q\delta_g}(\Sigma_{u}^{q})\Longrightarrow \mathcal{H}_{H^{1+\delta_g}}^{-}(\Sigma_{u,m}^{q})=0$$ by \rif{zeroq} for every positive integer $m$. By again appealing to \rif{othercontrol2} we deduce that $\mathcal{H}_{H^{1+\delta_g}}(\Sigma_{u,m}^{q})=0$ for every positive integer $m$ and the proof is complete. 

\vspace{5mm}

{\bf Acknowledgements.} This work is supported by the Engineering and Physical Sciences Research Council [EP/L015811/1]. The authors thank I. Chlebicka and J. Ok for comments on a preliminary version of the manuscript.

\end{document}